\newtheorem{theorem}{Theorem}[section]
\newtheorem{definition}[theorem]{Definition}
\newtheorem{proposition}[theorem]{Proposition}
\newtheorem{remark}[theorem]{Remark}
\newtheorem{lemma}[theorem]{Lemma}
\newtheorem{example}[theorem]{Example}
\newtheorem{corollary}[theorem]{Corollary}
\setlist[enumerate]{topsep=8pt, itemsep=-2pt, partopsep=1ex, parsep= 1ex}
\begin{document}

\title[On $k-$WUR and its generalizations]{On $k-$WUR and its generalizations}

	\author{P. Gayathri}
\address{Department of Mathematics, National Institute of Technology Tiruchirappalli, Tiruchirappalli, Tamil Nadu - 620 015, India}
\email{pgayathriraj1996@gmail.com}
\thanks{}

\author{V. Thota}
\address{Department of Mathematics, National Institute of Technology Tiruchirappalli, Tiruchirappalli, Tamil Nadu - 620 015, India}
\email[Corresponding author]{vamsinadh@nitt.edu}
\thanks{}

\subjclass[2020]{Primary 46B20, 41A65}
\keywords{$k-$weakly uniformly rotund; $k-$weakly locally uniformly rotund;  $k-$weakly strongly Chebyshev; $k-$weakly uniformly strongly Chebyshev; property $k-$$w$UC; product space}
\dedicatory{}

\begin{abstract}
We introduce two notions called $k-$weakly uniform rotundity ($k-$WUR) and $k-$weakly locally uniform rotundity ($k-$WLUR) in real Banach spaces. These are natural generalizations of the well-known concepts $k-$UR and WUR. By introducing two best approximation notions namely $k-$weakly strong Chebyshevity and $k-$weakly uniform strong Chebyshevity, we generalize some of the existing results to $k-$WUR and $k-$WLUR spaces. In particular, we present characterizations of $k-$WUR spaces in terms of $k-$weakly uniformly strong Chebyshevness. Also, the inheritance of the notions $k-$WUR and $k-$WLUR by quotient spaces are discussed. Further, we provide a necessary and sufficient condition for an infinite $\ell_p-$product space to be $k-$WUR (respectively, $k-$WLUR). As a consequence, we observe that the notions WUR and $k-$WUR coincide for an infinite $\ell_p-$product of a Banach space.
\end{abstract}

\maketitle

\section{Introduction}

Let $X$ be a real Banach space and $X^*$ be its dual. The closed unit ball and the unit sphere of $X$ are denoted by $B_X$ and $S_X$ respectively. Various rotundity notions play significant role in the geometry of Banach spaces, best approximation theory and  fixed point theory.  As a more robust definition of rotundity, Clarkson \cite{Clar1936} proposed the concept of uniform rotundity in 1936. Then, as a directionalization of uniform rotundity in 1939, \v{S}mulian introduced weakly uniform rotundity \cite{Smul1939}. The corresponding local versions called, locally uniform rotundity and weakly locally uniform rotundity were later introduced by Lovaglia in 1955 \cite{Lova1955}. Additionally, generalizations of these local versions known as midpoint locally uniform rotundity and weakly midpoint locally uniform rotundity are introduced and studied in the literature. We refer to \cite{AlDi1985,DeGZ1993,DuSh2018,GaTh2022,Megg1998,Smit1978a,Smit1986,ZhLZ2015} for further study on these notions.

In addition to these notions, Sullivan established the notion of $k-$uniform rotundity \cite{Sull1979} in 1979 as a generalization of uniform rotundity using the $k-$dimensional volume that is presented as follows. For any $k \in \mathbb{Z}^{+},$ the $k-$dimensional determinant $D_k[x_1, x_2, \dots, x_{k+1};f_1, f_2, \dots, f_k]$ generated by $(k+1)-$vectors $x_1,x_2,\dots,x_{k+1}$ in $X$ and $k-$functionals $f_1,f_2,\dots,f_k$ in $X^*$ is defined as

\begin{align*}
	D_k[x_1, x_2, \dots, x_{k+1};f_1, f_2, \dots, f_k]= &
	\begin{vmatrix}
		1 & 1 & \dots & 1 \\
		f_1(x_1) & f_1(x_2) & \dots & f_1(x_{k+1})\\
		\vdots & \vdots & \ddots & \vdots \\
		f_k(x_1) & f_k(x_2) & \dots & f_k(x_{k+1})\\
	\end{vmatrix}.\\
\end{align*}

\justifying{ The $k-$dimensional volume $V[x_1, x_2, \dots, x_{k+1}]$  enclosed by $(k+1)-$vectors $x_1, x_2, \dots, x_{k+1}$ in $X$ is defined as
$V[x_1, x_2, \dots, x_{k+1}]=  \sup \{D_k[x_1, x_2, \dots, x_{k+1};g_1, g_2, \dots, g_k]: g_1,g_2,\dots,g_k \in S_{X^*}\}.$
We also write $ D_k[x_1, x_2, \dots, x_{k+1};f_1, f_2, \dots, f_k]$ as $D_k[(x_i)_{i=1}^{k+1}; (f_j)_{j=1}^{k}]$ and $V[x_1, x_2, \dots, x_{k+1}]$ as $V[(x_i)_{i=1}^{k+1}].$	}

\begin{definition}\textnormal{\cite{Sull1979}} Let $k \in \mathbb{Z}^{+}.$ A space $X$ is said to be $k-$uniformly rotund (in short, $k-$UR), if for every $\epsilon>0$,
	\[
	\delta^{k}_{X}(\epsilon) \coloneqq  \inf\left\{1-\dfrac{1}{k+1}\left \Vert \sum\limits_{i=1}^{k+1}x_i \right \Vert: 	x_1, x_2, \dots,x_{k+1}\in S_X,
	V[(x_i)_{i=1}^{k+1}]\geq \epsilon
	\right\}>0.
	\]
\end{definition}
It is significant to note that the notion $k-$UR reinforces Singer's concept of $k-$rotundity \cite{Sing1960}, which is stated in the following manner. The space $X$ is said to be $k-$rotund, if for any $x_1, x_2, \dots, x_{k+1} \in S_X$ with  $V[(x_i)_{i=1}^{k+1}]>0$, it follows that $\frac{1}{k+1}\|\sum_{i=1}^{k+1}x_i\|=1.$
Similar to this, $k-$version of some other rotundity properties are also introduced and studied in the literature as $k-$locally uniform rotundity (in short, $k-$LUR) \cite{Sull1979}, $k-$midpoint locally uniform rotundity (in short, $k-$MLUR) \cite{He1997}, $k-$weakly midpoint locally uniform rotundity (in short, $k-$WMLUR) \cite{XiLi2004} and $k-$strong rotundity \cite{VeRS2021}. We refer to \cite{GaTh2023,GeSu1981,KaVe2018,LiYu1985,LiZZ2018,NaWa1988,SmTu1990,Veen2021,YuWa1990} for further study on these notions.

It would seem relevant to investigate the question of whether it is feasible to define the concepts of WUR and WLUR in a corresponding $k-$version. In response to this question, we introduce the notions $k-$WUR and $k-$WLUR (see, \Cref{def k-wur}) in this article. These generalizations are natural in Sullivan's sense.

It is well established in the literature that the geometry of Banach spaces and the best approximation theory in Banach spaces are closely related. Several authors investigated various rotundity properties  in terms of notions from best approximation theory.  To see this, we need the following notations and notions.

Let $k \in \mathbb{Z}^{+}.$ For any non-empty bounded subset $C$ of $X,$ the $k-$dimensional diameter $diam_k(C)$ is defined as $diam_k(C)=\sup\{V[(x_{i})_{i=1}^{k+1}]: x_1, x_2, \dots, x_{k+1} \in C\}.$ For any non-empty  subset $A$ of $X,$ $x\in X$ and $\delta>0,$ we define $P_A(x)=\{y \in A: \|x-y\|=d(x,A)\}$ and $P_A(x, \delta)=\{y \in A: \|x-y\| \leq d(x, A)+ \delta\},$ where $d(x,A)= \inf\{\|x-y\|: y \in A\}.$

The set $A$ is said to be proximinal at $x,$ if $P_A(x)$ is non-empty.  We say that $A$ is $k-$Chebyshev \cite{Sing1960} at $x,$ if A is proximinal at $x$ and  $diam_k(P_A(x))=0.$ We say that $A$ is $k-$strongly Chebyshev (in short, $k-$SCh) \cite{VeRS2021} at $x,$ if $A$ is proximinal at $x$ and  for every $\epsilon>0$ there exists $\delta>0$ such that $diam_k(P_A(x, \delta))< \epsilon.$ Let $B$ be any non-empty subset of $X.$  The set $A$ is said to be proximinal on $B,$ if $A$ is proximinal at every $x \in B.$ Similarly, we define for other notions such as $k-$Chebyshev and $k-$strongly Chebyshev. We say that $A$ is $k-$uniformly strongly Chebyshev (in short, $k-$USCh) \cite{KaVe2018} on $B,$ if $A$ is proximinal on $B$ and  for every $\epsilon>0$ there exists $\delta>0$ such that $diam_k(P_A(x, \delta))< \epsilon$ for all $x \in B.$

 According to Singer  \cite{Sing1960}, a space is $k-$rotund if every proximinal convex subset is $k-$Chebyshev, and the converse also holds. Recently, a characterization of $k-$UR spaces in terms of $k-$USCh obtained by Kar and Veeramani \cite{KaVe2018}. Also, a characterization of $k-$MLUR spaces in terms of $k-$SCh is obtained in \cite{LiZZ2018}. Further, Veena Sangeetha et al. \cite{VeRS2021} proved that the space is $k-$strongly rotund iff every closed convex subset is $k-$SCh.

In light of the aforementioned results, it is reasonable to presume that the new geometric notions $k-$WUR and $k-$WLUR may be characterized or analyzed in terms of appropriate notions from best approximation theory. To achieve this, two best approximation notions namely $k-$weakly strong Chebyshevness and  $k-$weakly uniformly strong Chebyshevness (see, \Cref{def k-SCh}) are defined in this article.

It has been extensively explored and is of great interest how quotient spaces may inherit rotundity notions and how geometric notions can be stable under $\ell_p-$product. Geremia and Sullivan \cite{GeSu1981} proved that for any $1<p<\infty,$ $(\oplus_p X_i)_{i\in\mathbb{N}}$ is $2-$UR iff all but except one of the $X_i$ are UR with a common modulus of convexity and the remaining space is $2-$UR. Later, in \cite{YuWa1990}, the authors generalized this result for any $k\in \mathbb{Z^+}.$ In \cite{SmTu1990}, Smith and Turett proved that an $\ell_p-$product space can not be $k-$UR whenever none of the underlying space is UR. Recently, all the preceding results were obtained for $k-$rotund spaces in an analogous way \cite{Veen2021,VeRS2021}. This article seeks to examine the stability of the notions $k-$WUR and $k-$WLUR in this approach.

The paper is organized as follows. In Section 2, we present some properties of $k-$dimensional determinants, which are essential to prove our results. We introduce the notions $k-$weakly strong Chebyshevness (in short, $k-$$w$SCh), $k-$weakly uniformly strong Chebyshevness (in short, $k-$$w$USCh), property $k-$$w$UC and obtain some relationships among them. These notions will be used to provide some necessary and sufficient conditions for a space to be $k-$WUR (respectively,  $k-$WLUR,  $k-$WMLUR).

In Section 3, we introduce and study the notions $k-$WUR and $k-$WLUR. Some of the sequential characterizations of $k-$WUR and $k-$WLUR presented in this section are necessary to prove our main results.
Characterizations of $k-$WUR spaces in terms of $k-$$w$USCh of the closed unit ball as well as subspaces are obtained. Further, the concepts $k-$$w$SCh and property $k-$$w$UC are explored in $k-$WLUR and $k-$WMLUR spaces. We provide counter examples to demonstrate that the converses of some implications are not necessarily true.

We investigate the stability of the notions $k-$WUR, $k-$WLUR and $k-$WMLUR in Section 4. First, we obtain two results that correlate $k-$WUR and $k-$WLUR properties of a space with the associated quotient spaces.
We provide necessary and sufficient conditions for finite and infinite $\ell_p-$product space to be $k-$WUR (respectively, $k-$WLUR, $k-$WMLUR). As a consequence, we observe that the notions WUR (respectively, WLUR, WMLUR) and $k-$WUR (respectively, $k-$WLUR, $k-$WMLUR) coincide for an infinite $\ell_p-$product of a Banach space.
	
	\section{Preliminaries}
	This section begins with some $k-$dimensional determinant properties that will be utilized throughout the article. We assume all subspaces to be closed.
\begin{remark} \label{volume property}
		Let $x_1, x_2, \dots, x_{k+1} \in X$ and $f_1, f_2, \dots, f_k \in X^*.$ Then,
		\begin{enumerate}\label{remark vol}
			\item $D_k[(x_i+w)_{i=1}^{k+1};(f_j)_{j=1}^{k}]= D_k[(x_i)_{i=1}^{k+1};(f_j)_{j=1}^{k}],$ for any $w \in X;$
			\item $D_k[(cx_i)_{i=1}^{k+1};(f_j)_{j=1}^{k}]$ = $c^kD_k[(x_i)_{i=1}^{k+1};(f_j)_{j=1}^{k}],$ for any $c \in \mathbb{R}$;
			\item $D_k[(x_i)_{i=1}^{k+1};(g_j)_{j=1}^{k}]>0$ for some $g_1, g_2, \dots, g_k \in S_{X^*}$  $\Leftrightarrow$ the set $\{x_i-x_{k+1}: 1 \leq i \leq k\}$ is linearly independent;
			\item $D_k[(x_i+Y)_{i=1}^{k+1}; (h_j)_{j=1}^{k}] = D_k[(x_i+y_i)_{i=1}^{k+1}; (h_j)_{j=1}^{k}],$ where $Y$ is a subspace of $X,$  $x_i+Y \in X/Y,$ $y_i \in Y$ for all $1 \leq i \leq k+1$ and $h_j \in Y^{\perp} \cong (X/Y)^*$ for all $1 \leq j \leq k.$
		\end{enumerate}
	\end{remark}
In the following result, we observe certain continuity properties of  $k-$dimensional determinants. The proof of \cite[Lemmas 2.9 and 2.10]{KaVe2018} may  also be used to prove the following lemma. However, we concisely provide a direct proof here.
\begin{lemma}\label{lem samir}
	Let $(x_n^{(1)}),(x_n^{(2)}), \dots, (x_n^{(k+1)})$ be   $(k+1)-$bounded sequences in $X$. Then for any $f_1, f_2, \dots, f_{k}  \in S_{X^*}$ the following statements hold.
	\begin{enumerate}
		\item If $(c_n^{(1)}),(c_n^{(2)}), \dots, (c_n^{(k+1)})$ be  $(k+1)-$sequences in $\mathbb{R}$ such that $c_n^{(i)} \to c_i$ for some $c_i \in \mathbb{R},$ for all $1 \leq i \leq k+1,$ then  $\vert D_k[(c_n^{(i)}x_n^{(i)})_{i=1}^{k+1};(f_j)_{j=1}^{k}]- D_k[(c_ix_n^{(i)})_{i=1}^{k+1};(f_j)_{j=1}^{k} ]\vert \to 0.$ 	
		\item If $(y_n^{(1)}),(y_n^{(2)}), \dots, (y_n^{(k+1)})$ be $(k+1)-$sequences in $X$ such that $y_n^{(i)} \xrightarrow{w} y_i$ for some $y_i \in X,$  for all $1 \leq i\leq k+1,$ then
		$\vert D_k[(y_n^{(i)}+x_n^{(i)})_{i=1}^{k+1};(f_j)_{j=1}^{k}]-  D_k[(y_i+x_n^{(i)})_{i=1}^{k+1};(f_j)_{j=1}^{k}]\vert \to 0.$
	\end{enumerate}
\end{lemma}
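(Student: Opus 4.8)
The plan is to exploit the fact that, for fixed functionals $f_1, \dots, f_k \in S_{X^*}$, the quantity $D_k[(v_i)_{i=1}^{k+1};(f_j)_{j=1}^{k}]$ is just the ordinary determinant $\det[a_1, \dots, a_{k+1}]$ of the $(k+1)\times(k+1)$ matrix whose $i$-th column is the vector $a_i = (1, f_1(v_i), \dots, f_k(v_i))^T \in \mathbb{R}^{k+1}$, and that this determinant is \emph{multilinear} in its $k+1$ columns. For any two families of columns $(a_i)$ and $(b_i)$ the standard telescoping identity for multilinear maps gives
\begin{align*}
\det[a_1, \dots, a_{k+1}] - \det[b_1, \dots, b_{k+1}] = \sum_{i=1}^{k+1} \det[b_1, \dots, b_{i-1},\, a_i - b_i,\, a_{i+1}, \dots, a_{k+1}].
\end{align*}
Hence it suffices to show that each summand on the right tends to $0$. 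For this I would invoke the Leibniz expansion: a $(k+1)\times(k+1)$ determinant is a polynomial in which every monomial carries exactly one factor from each column, so if one column tends to $0$ while all the remaining columns stay bounded, the determinant tends to $0$.

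For part (1), put $a_n^{(i)} = (1,\, c_n^{(i)} f_1(x_n^{(i)}), \dots,\, c_n^{(i)} f_k(x_n^{(i)}))^T$ and $b_n^{(i)} = (1,\, c_i f_1(x_n^{(i)}), \dots,\, c_i f_k(x_n^{(i)}))^T$, so that the two determinants in the statement are $\det[a_n^{(1)}, \dots, a_n^{(k+1)}]$ and $\det[b_n^{(1)}, \dots, b_n^{(k+1)}]$. The difference column $a_n^{(i)} - b_n^{(i)}$ has top entry $0$ and its $j$-th lower entry equal to $(c_n^{(i)} - c_i) f_j(x_n^{(i)})$. Since $\|f_j\|=1$ and $(x_n^{(i)})$ is bounded, $|f_j(x_n^{(i)})|$ is bounded, while $c_n^{(i)} - c_i \to 0$; hence $a_n^{(i)} - b_n^{(i)} \to 0$. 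The columns not being differenced are bounded because the $c_n^{(i)}$ converge (hence are bounded), the $c_i$ are fixed, and the $(x_n^{(i)})$ are bounded. Substituting into the telescoping identity yields the claim.

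For part (2) the structure is identical. Put $a_n^{(i)} = (1,\, f_1(y_n^{(i)}+x_n^{(i)}), \dots,\, f_k(y_n^{(i)}+x_n^{(i)}))^T$ and $b_n^{(i)} = (1,\, f_1(y_i+x_n^{(i)}), \dots,\, f_k(y_i+x_n^{(i)}))^T$. By linearity of each $f_j$, the difference column $a_n^{(i)} - b_n^{(i)}$ has top entry $0$ and $j$-th lower entry $f_j(y_n^{(i)} - y_i)$. Since $y_n^{(i)} \xrightarrow{w} y_i$, we have $f_j(y_n^{(i)} - y_i) \to 0$ for every $j$, so the difference column tends to $0$. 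The remaining columns are bounded because a weakly convergent sequence is norm-bounded (Banach--Steinhaus), so $(y_n^{(i)})$ and hence $(y_n^{(i)}+x_n^{(i)})$ are bounded, $y_i$ is fixed, and $(x_n^{(i)})$ is bounded. Again the telescoping identity gives the result.

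I do not expect a genuine obstacle here; the argument is a routine continuity estimate. The only points demanding care are the uniform-in-$n$ boundedness of all the columns that are \emph{not} being differenced (this is exactly where boundedness of $(x_n^{(i)})$, convergence of the $c_n^{(i)}$, and norm-boundedness of weakly convergent sequences are used), together with the observation that the constant top row of $1$'s is untouched by the differencing, so each difference column genuinely carries a $0$ in its first coordinate and the multilinear telescoping applies verbatim.
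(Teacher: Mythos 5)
Your proposal is correct and follows essentially the same route as the paper: the paper's proof also telescopes the difference column by column (inserting the mixed determinants $D_k[(c_n^{(j)}x_n^{(j)})_{j=1}^{i},(c_jx_n^{(j)})_{j=i+1}^{k+1};(f_j)_{j=1}^{k}]$), reduces each term to a determinant whose $i$-th column is the difference column with top entry $0$, and bounds it by $|c_n^{(i)}-c_i|M^k k\,k!$ via expansion along that column, exactly as your Leibniz argument does. Your explicit treatment of part (2), including the Banach--Steinhaus boundedness of the weakly convergent sequences, is the same argument the paper leaves implicit with ``follows by the similar argument.''
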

\begin{proof}
	$(1)$: Let $(c_n^{(1)}), (c_n^{(2)}), \dots, (c_n^{(k+1)})$ be $(k+1)-$sequences in $\mathbb{R}$ such that $c_n^{(i)} \to c_i$ for some $c_i \in \mathbb{R},$ for all $1 \leq i \leq k+1.$ Let $a_n= (D_k[(c_n^{(i)}x_n^{(i)})_{i=1}^{k+1};(f_j)_{j=1}^{k}]- D_k[(c_ix_n^{(i)})_{i=1}^{k+1};(f_j)_{j=1}^{k}])$ for every  $n \in \mathbb{N}.$
\[
		\begin{aligned}
			|a_n|=& \vert D_k[(c_n^{(i)}x_n^{(i)})_{i=1}^{k+1};(f_j)_{j=1}^{k}]+ \sum_{i=1}^{k}D_k[(c_n^{(j)}x_n^{(j)})_{j=1}^{i}, (c_jx_n^{(j)})_{j=i+1}^{k+1}; (f_j)_{j=1}^{k}]- \\
			&\hspace{1.5cm} \sum_{i=1}^{k}D_k[(c_n^{(j)}x_n^{(j)})_{j=1}^{i}, (c_jx_n^{(j)})_{j=i+1}^{k+1}; (f_j)_{j=1}^{k}]-	
			D_k[(c_ix_n^{(i)})_{i=1}^{k+1};(f_j)_{j=1}^{k} ]\vert 	\\
			\leq &\sum_{i=1}^{k+1}|D_k[(c_n^{(j)}x_n^{(j)})_{j=1}^{i}, (c_jx_n^{(j)})_{j=i+1}^{k+1}; (f_j)_{j=1}^{k}]-D_k[(c_n^{(j)}x_n^{(j)})_{j=1}^{i-1}, (c_jx_n^{(j)})_{j=i}^{k+1}; (f_j)_{j=1}^{k}]|.\\
		\end{aligned}
\]
Using the properties of determinant, for any $n \in \mathbb{N}$ we have $ |a_n| \leq \sum_{i=1}^{k+1}|b_n^{(i)}|,$ where
\[
	\begin{aligned}
		 b_n^{(i)} &= \begin{vmatrix}
			1 & \dots & 1 & 0 & 1 & \dots & 1 \\
			f_1(c_n^{(1)}x_n^{(1)})& \dots & f_1(c_n^{(i-1)}x_n^{(i-1)})& (c_n^{(i)}-c_i)f_1(x_n^{(i)})& f_1(c_{i+1}x_n^{(i+1)}) & \dots & f_1(c_{k+1}x_n^{(k+1)})\\
			\vdots& \ddots &	\vdots& \vdots&		\vdots&	\ddots &\vdots\\
			f_k(c_n^{(1)}x_n^{(1)})& \dots & f_k(c_n^{(i-1)}x_n^{(i-1)}) & (c_n^{(i)}-c_i)f_k(x_n^{(i)})& f_k(c_{i+1}x_n^{(i+1)}) & \dots & f_k(c_{k+1}x_n^{(k+1)})\\
		\end{vmatrix}
	\end{aligned}
\]
for all  $1 \leq i \leq k+1.$ Denote $M= \sup\{\|x_n^{(i)}\|, \|c_n^{(i)}x_n^{(i)}\|, \|c_ix_n^{(i)}\|: 1 \leq i \leq k+1, n \in \mathbb{N}\}.$	Now, for any $1 \leq i \leq k+1,$ by evaluating the determinant $b_n^{(i)}$ along the $i^{th}$ column, we have
\[
	|b_n^{(i)}|  \leq \sum\limits_{s=1}^{k}|(c_n^{(i)}-c_i)f_s(x_n^{(i)}) D_{k-1}[(c_n^{(j)}x_n^{(j)})_{j=1, }^{i-1}, (c_jx_n^{(j)})_{j=i+1}^{k+1}; (f_j)_{j=1, j\neq s}^{k}] |\\
	 \leq |c_n^{(i)}-c_i| M^{k}kk!
\]
and hence $|b_n^{(i)}| \to 0.$ Therefore, $|a_n| \to 0.$\\
	$(2)$: Proof follows by the similar argument involved in the proof of $(1).$
\end{proof}

For any $k \in \mathbb{Z}^{+},$ we define  $\mathcal{S}_k(n)=\{\alpha \subseteq \{1,2, \dots, k\}: \alpha \ \textnormal{contains exactly}\ n \ \textnormal{elements}\}$ for every $n \in \{1, 2, \dots, k\}$  and  $\mathcal{S}_k(0)= \emptyset$. If $n \in \{1,2, \dots, k\}$ and $\alpha \in \mathcal{S}_k(n),$ we denote the elements of $\alpha$ as $\alpha_1, \alpha_2, \dots, \alpha_n,$ where $\alpha_1< \alpha_2 < \dots < \alpha_n.$
\justifying{
\begin{lemma}\label{prop det prop}
	Let $(x_n^{(1)}), (x_n^{(2)}), \dots,  (x_n^{(k+2)})$ be $(k+2)-$sequences in $X$ and $f_1, f_2, \dots, f_{k+1} \in S_{X^*}.$ If $D_k[(x_n^{(\alpha_i)})_{i=1}^{k+1}; (f_{\beta_j})_{j=1}^{k}] \to 0$ for all $\alpha \in \mathcal{S}_{k+2}(k+1)$ and $\beta \in \mathcal{S}_{k+1}(k),$ then $D_{k+1}[(x_n^{(i)})_{i=1}^{k+2};$$ (f_j)_{j=1}^{k+1}] \to 0.$
\end{lemma}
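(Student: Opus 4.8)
The plan is to treat $D_{k+1}[(x_n^{(i)})_{i=1}^{k+2};(f_j)_{j=1}^{k+1}]$, for each fixed $n$, as a single numerical $(k+2)\times(k+2)$ determinant and to reduce its convergence to that of the $k$-dimensional determinants furnished by the hypothesis. The most direct device is a cofactor (Laplace) expansion along the bottom row, i.e.\ the row carrying $f_{k+1}$, which gives
\[
D_{k+1}[(x_n^{(i)})_{i=1}^{k+2};(f_j)_{j=1}^{k+1}]=\sum_{i=1}^{k+2}(-1)^{(k+2)+i}\,f_{k+1}(x_n^{(i)})\,D_k[(x_n^{(\ell)})_{\ell\neq i};(f_j)_{j=1}^{k}].
\]
Indeed, deleting the bottom row and the $i$-th column of the defining matrix leaves precisely the matrix computing $D_k$ on the $k+1$ vectors other than $x_n^{(i)}$ against $f_1,\dots,f_k$. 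Each such minor has the form $D_k[(x_n^{(\alpha_s)})_{s=1}^{k+1};(f_{\beta_t})_{t=1}^{k}]$ with $\alpha=\{1,\dots,k+2\}\setminus\{i\}\in\mathcal{S}_{k+2}(k+1)$ and $\beta=\{1,\dots,k\}\in\mathcal{S}_{k+1}(k)$, so it tends to $0$ by assumption.

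The step I expect to be the main obstacle is the passage from ``each minor tends to $0$'' to ``the sum tends to $0$'', because the coefficients $f_{k+1}(x_n^{(i)})$ are a priori not controlled: a null factor times an unbounded factor need not be null. If the sequences are bounded (the standing situation in which this lemma is applied, as in \Cref{lem samir}), this obstacle evaporates, since $|f_{k+1}(x_n^{(i)})|\le\|x_n^{(i)}\|$ is then bounded and each summand, hence the whole finite sum, tends to $0$. I would record this as the quick proof in the bounded case.

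To remove the boundedness restriction altogether I would instead exploit the full strength of the hypothesis (all $\beta\in\mathcal{S}_{k+1}(k)$) together with the translation invariance in \Cref{volume property}$(1)$. Performing the column operations $C_i\mapsto C_i-C_{k+2}$ leaves $D_{k+1}$ unchanged and turns its top row into $(0,\dots,0,1)$; expanding along that row shows $D_{k+1}$ equals, up to sign, $\det A_n$, where $A_n=[\,f_j(x_n^{(i)}-x_n^{(k+2)})\,]_{1\le i,j\le k+1}$ is a genuine $(k+1)\times(k+1)$ matrix with no row of ones. Undoing the same column operation on a minor (again via \Cref{volume property}$(1)$) identifies each $k\times k$ minor of $A_n$ (delete row $j_0$, column $i_0$) with $\pm D_k[(x_n^{(i)})_{i\in\alpha};(f_j)_{j\in\beta}]$, where $\alpha=\{1,\dots,k+2\}\setminus\{i_0\}$ and $\beta=\{1,\dots,k+1\}\setminus\{j_0\}$; hence every such minor tends to $0$. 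Consequently the adjugate $\operatorname{adj}(A_n)$, whose entries are exactly these signed minors, tends to $0$ entrywise, and feeding this into the classical identity $\det(\operatorname{adj}(A_n))=(\det A_n)^{k}$ together with the continuity of the determinant in its entries yields $(\det A_n)^{k}\to 0$, whence $\det A_n\to 0$ and therefore $D_{k+1}\to 0$. The point of this route is that the identity repackages the determinant as a polynomial in the null minors alone, so the uncontrolled coefficients $f_{k+1}(x_n^{(i)})$ never enter.
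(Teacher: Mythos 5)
Your proposal is correct, and it proves the lemma by a genuinely different route from the paper. The paper also works with bordered determinants, but via Sylvester's determinant identity together with a two-case threshold analysis: either every $D_1[x_n^{(1)},x_n^{(i+1)};f_j]\to 0$, in which case Sylvester's identity (with the pivot entry $1$) exhibits $D_{k+1}$ as the determinant of a $(k+1)\times(k+1)$ matrix all of whose entries are null; or there is a level $r\in\{2,\dots,k\}$ and a subsequence along which some $D_{r-1}$ stays bounded below by $\epsilon$ while all the $D_r$'s are null, and then the identity $\det(A_m)\,D_{r-1}[\cdot]^{\,k-r+1}=\det(B_m)$ is used with the pivot bounded away from zero so that one may divide by $\epsilon^{k-r+1}$. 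Your adjugate argument compresses all of this into one stroke: with $A_n=[f_j(x_n^{(i)}-x_n^{(k+2)})]_{1\le i,j\le k+1}$ one has $D_{k+1}=\pm\det A_n$, each entry of $\operatorname{adj}(A_n)$ is $\pm D_k[(x_n^{(i)})_{i\in\alpha};(f_j)_{j\in\beta}]$ with $k+2\in\alpha\in\mathcal{S}_{k+2}(k+1)$ and $\beta\in\mathcal{S}_{k+1}(k)$, and $(\det A_n)^{k}=\det(\operatorname{adj}(A_n))\to 0$ because the determinant is a polynomial without constant term in the (null) adjugate entries. This buys several things the paper's proof does not have: no case distinction, no subsequence extraction (the paper's Case~$(ii)$ concludes along a chosen subsequence and leaves the standard passage back to the full sequence implicit), no nonvanishing pivot, and no boundedness assumption --- correctly so, since the lemma assumes none; your preliminary Laplace expansion along the $f_{k+1}$ row is, as you yourself note, only conclusive for bounded sequences, but it is subsumed by the adjugate argument and can simply be dropped. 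Moreover your proof consumes only those hypothesised determinants whose index set $\alpha$ contains $k+2$, so it establishes a formally stronger statement, and it is quantitative, yielding $|D_{k+1}|^{k}\le (k+1)!\,\max_{\alpha,\beta}\bigl|D_k[(x_n^{(i)})_{i\in\alpha};(f_j)_{j\in\beta}]\bigr|^{k+1}$, which the paper's subsequence-based argument does not directly supply. Two cosmetic remarks: you should note explicitly that $\det(\operatorname{adj}(A))=(\det A)^{k}$ holds for \emph{singular} $(k+1)\times(k+1)$ matrices as well (it is a polynomial identity, obtained e.g.\ by continuity from the invertible case), since $\det A_n$ is precisely the quantity being shown to vanish; and what you actually perform are determinant-preserving column operations $C_i\mapsto C_i-C_{k+2}$, which amounts to the same computation as \Cref{volume property}$(1)$ with $w=-x_n^{(k+2)}$, so that citation is harmless but slightly off-label.
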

}
\begin{proof}
	Let  $(x_n^{(1)}), (x_n^{(2)}), \dots,  (x_n^{(k+2)})$ be $(k+2)-$sequences in $X$ and $f_1, f_2, \dots, f_{k+1} \in S_{X^*}.$\\
	Case$-(i)$: Suppose $D_1[x_n^{(1)}, x_n^{(i+1)};f_j] \to 0$ for all $1 \leq i, j \leq k+1.$ For every $n \in \mathbb{N},$ using Sylvester's determinant identity \cite[ Page 27]{HoJo2013},  we have $D_{k+1}[(x_n^{(i)})_{i=1}^{k+2}; (f_j)_{j=1}^{k+1}]= det([c_{j,i}^{(n)}]),$ where $c_{j,i}^{(n)}= D_1[x_n^{(1)}, x_n^{(i+1)};f_j]$ for all $1 \leq i,j \leq k+1.$ By evaluating the determinant $det([c_{j,i}^{(n)}])$ along any row and using the assumption,  we have $D_{k+1}[(x_n^{(i)})_{i=1}^{k+2}; (f_j)_{j=1}^{k+1}] \to 0.$\\	
	Case$-(ii)$: Suppose $D_1[x_n^{(1)}, x_n^{(i+1)};f_j]$  does not converge to $ 0$ for some $1 \leq i,j \leq k+1.$ Now, we claim that, there exists $r \in \mathbb{Z}^{+}$ such that $2 \leq r \leq k$ satisfying
	\[
	D_{r-1}[(x_{n}^{(\alpha_i)})_{i=1}^{r}; (f_{\beta_j})_{j=1}^{r-1}]  \nrightarrow 0   \ \mbox{for some} \  \alpha \in \mathcal{S}_{k+2}(r), \beta \in \mathcal{S}_{k+1}(r-1)
	\]
	and
	\[
	D_{r}[(x_n^{(\lambda_i)})_{i=1}^{r+1}; (f_{\mu_j})_{j=1}^{r}] \to 0 \ \mbox{for every} \ \lambda \in \mathcal{S}_{k+2}(r+1), \mu \in \mathcal{S}_{k+1}(r).
	\]
	If $D_{2}[(x_n^{(\lambda_i)})_{i=1}^{3}; (f_{\mu_j})_{j=1}^{2}] \to 0$ for every  $\lambda \in \mathcal{S}_{k+2}(3) \ \mbox{and} \ \mu \in \mathcal{S}_{k+1}(2),$ then by the assumption of Case$-(ii),$ choose $r=2.$ If not, then there exist $ \alpha \in \mathcal{S}_{k+2}(3)$ and  $\beta \in \mathcal{S}_{k+1}(2)$ such that $D_{2}[(x_{n}^{(\alpha_i)})_{i=1}^{3}; (f_{\beta_j})_{j=1}^{2}]  \nrightarrow 0.$ Now if $D_{3}[(x_n^{(\lambda_i)})_{i=1}^{4}; (f_{\mu_j})_{j=1}^{3}] \to 0$ for every  $\lambda \in \mathcal{S}_{k+2}(4) \ \mbox{and} \ \mu \in \mathcal{S}_{k+1}(3),$ then choose $r=3.$ Similarly, proceeding like this and using the hypothesis, the claim holds. Therefore, there exist $r \in \mathbb{Z}^{+}$ with $2 \leq r \leq k,$ $\epsilon>0$ and a subsequence $(n_{m})$ of $(n)$ satisfying
	\[
	\vert D_{r-1}[(x_{n_m}^{(\alpha_i)})_{i=1}^{r}; (f_{\beta_j})_{j=1}^{r-1}] \vert \geq \epsilon  \ \mbox{for some} \  \alpha \in \mathcal{S}_{k+2}(r), \beta \in \mathcal{S}_{k+1}(r-1), \mbox{for all} \ m \in \mathbb{N}
	\]
	and
	\[
	D_{r}[(x_n^{(\lambda_i)})_{i=1}^{r+1}; (f_{\mu_j})_{j=1}^{r}] \to 0 \ \mbox{for every} \ \lambda \in \mathcal{S}_{k+2}(r+1), \mu \in \mathcal{S}_{k+1}(r).
	\]
	Without loss of generality, assume $\alpha=\{1,2, \dots, r\}$ and  $\beta=\{1, 2, \dots, r-1\}.$  For any $m \in \mathbb{N},$ consider
	\[
		\begin{aligned}
			A_{m}= &
			\begin{bmatrix}
				1 & 1 & \dots & 1 \\
				f_1(x_{n_m}^{(1)}) & f_1(x_{n_m}^{(2)}) & \dots & f_1(x_{n_m}^{(k+2)})\\
				\vdots & \vdots & \ddots & \vdots \\
				f_{k+1}(x_{n_m}^{(1)}) & f_{k+1}(x_{n_m}^{(2)}) & \dots & f_{k+1}(x_{n_m}^{(k+2)})\\
			\end{bmatrix}\hspace{-0.1cm}.
		\end{aligned}
	\]
	Now using Sylvester's determinant identity \cite[Page 27]{HoJo2013} for every $m \in \mathbb{N},$ we have
	\[
	det(A_m)D_{r-1}[(x_{n_m}^{(i)})_{i=1}^{r}; (f_{j})_{j=1}^{r-1}]^{k-r+1}= det(B_m),
	\]
	where $B_m= [b_{s,t}^{(r,m)}]_{r+1 \leq s,t \leq k+2}$ and $b_{s,t}^{(r,m)}= D_r[(x_{n_m}^{(i)})_{i=1}^{r},x_{n_m}^{(t)}; (f_j)_{j=1}^{r-1}, f_{s-1}]$ for all $r+1 \leq s, t \leq k+2.$  By evaluating the determinant of $B_m$ along any row and using the claim, we get $det(B_m) \to 0.$  Note that $|D_{k+1}[(x_{n_m}^{(i)})_{i=1}^{k+2}; (f_j)_{j=1}^{k+1}]|= |det(A_m)| \leq \frac{1}{\epsilon^{k-r+1}}|det(B_m)|$ and hence $|D_{k+1}[(x_{n_m}^{(i)})_{i=1}^{k+2}; (f_j)_{j=1}^{k+1}]| \to 0.$ Thus, $|D_{k+1}[(x_{n}^{(i)})_{i=1}^{k+2}; (f_j)_{j=1}^{k+1}]| \to 0.$
\end{proof}
Now, we characterize Schur's property using  $k-$dimensional determinants. The space $X$ is said to have Schur's property, if norm  and weak convergences coincide for sequences in $X.$

\begin{proposition}\label{lem Schur}
	The following statements are equivalent.
	\begin{enumerate}
		\item $X$ has Schur's property.
		\item If $(x_n^{(1)}), (x_n^{(2)}), \dots, (x_n^{(k+1)})$ be $(k+1)-$sequences in $X$ and $D_k[(x_n^{(i)})_{i=1}^{k+1}; (f_j)_{j=1}^{k}] \to 0$ for all $f_1, f_2, \dots, f_{k} \in S_{X^*},$ then $V[(x_n^{(i)})_{i=1}^{k+1}] \to 0.$
	\end{enumerate}
\end{proposition}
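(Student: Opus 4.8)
The plan is to prove the two implications separately; $(2)\Rightarrow(1)$ is the easy direction, while $(1)\Rightarrow(2)$ carries the real difficulty. For $(2)\Rightarrow(1)$ I would argue by contraposition. If $X$ fails Schur's property, there is a sequence $z_n\xrightarrow{w}0$ with $\|z_n\|\not\to0$; after normalizing and passing to a subsequence I may assume $(z_n)$ is normalized and weakly null, and then (by the gliding-hump/Bessaga--Pelczynski construction) that it is a basic sequence whose coordinate functionals $z_m^{*}$ extend to $X^{*}$ with $\sup_m\|z_m^{*}\|\le 2K$ for some $K\ge 1$. I then split this single sequence into $k$ weakly null ones by setting $x_n^{(i)}=z_{(n-1)k+i}$ for $1\le i\le k$ and $x_n^{(k+1)}=0$. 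Since each $(x_n^{(i)})_n$ is weakly null, every fixed determinant $D_k[(x_n^{(i)})_{i=1}^{k+1};(f_j)_{j=1}^{k}]$ has all of its entries tending to $0$ and hence tends to $0$, so the hypothesis of $(2)$ holds. On the other hand, evaluating with $g_j=z^{*}_{(n-1)k+j}/\|z^{*}_{(n-1)k+j}\|\in S_{X^{*}}$ yields a diagonal determinant of value $\ge(2K)^{-k}$, whence $V[(x_n^{(i)})_{i=1}^{k+1}]\ge(2K)^{-k}\not\to0$. This contradicts $(2)$.

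For $(1)\Rightarrow(2)$, assume $X$ has Schur's property and $D_k[(x_n^{(i)})_{i=1}^{k+1};(f_j)_{j=1}^{k}]\to0$ for all $f_1,\dots,f_k\in S_{X^{*}}$. By \Cref{volume property}(1) I pass to the difference vectors $u_i^{(n)}=x_n^{(i)}-x_n^{(k+1)}$, so that $D_k=\det[f_j(u_i^{(n)})]_{1\le i,j\le k}$ and $V[(x_n^{(i)})_{i=1}^{k+1}]=\sup\{|\det[g_j(u_i^{(n)})]|:g_j\in S_{X^{*}}\}$. The key reduction is a cofactor identity: for \emph{fixed} $f_2,\dots,f_k$ set $w_n=\sum_{i=1}^{k}(-1)^{i+1}M_i^{(n)}u_i^{(n)}$, where $M_i^{(n)}$ is the minor of $[f_j(u_l^{(n)})]_{2\le j\le k,\,l\ne i}$. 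Expanding along the first row gives $f_1(w_n)=\det[f_j(u_i^{(n)})]_{1\le i,j\le k}\to0$ for \emph{every} $f_1\in S_{X^{*}}$, so $w_n\xrightarrow{w}0$, and Schur's property then forces $\|w_n\|\to0$. Taking the supremum over $f_1$ first shows $V[(x_n^{(i)})_{i=1}^{k+1}]=\sup\{\|w_n(f_2,\dots,f_k)\|:f_2,\dots,f_k\in S_{X^{*}}\}$, so it remains only to upgrade this pointwise (in the functionals) convergence to the uniform statement $V\to0$.

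This upgrade is the main obstacle, and I expect it to be the technically hardest point. The functionals realizing the supremum for $V$ move with $n$ over the weak$^{*}$-compact but in general non-metrizable set $(S_{X^{*}})^{k-1}$, while the finite-dimensional range $\mathrm{span}(u_1^{(n)},\dots,u_k^{(n)})$ of the map $(f_2,\dots,f_k)\mapsto w_n$ also varies with $n$; hence neither a Dini-type argument nor a free weak$^{*}$ extraction of the functionals is available, and pointwise nullity does not by itself give uniform nullity. The way I would resolve it is to reduce to norm-convergent data. Multilinearity of the determinant yields the uniform estimate $|\det[g_j(u_i^{(n)})]-\det[g_j(u_i)]|\le C\max_i\|u_i^{(n)}-u_i\|$ for all $g_1,\dots,g_k\in S_{X^{*}}$ (this is precisely the computation behind \Cref{lem samir}); consequently, if $u_i^{(n)}\to u_i$ in norm for each $i$, then $V[(x_n^{(i)})_{i=1}^{k+1}]\to\sup\{|\det[g_j(u_i)]_{1\le i,j\le k}|:g_j\in S_{X^{*}}\}$, whereas the hypothesis forces $\det[f_j(u_i)]=0$ for all $f_j\in S_{X^{*}}$, so this supremum is $0$ and $V\to0$.

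To reach such norm-convergent subsequences in a Schur space I would exploit that a Schur space carries no nontrivial weakly null sequence, together with Rosenthal's $\ell_1$-theorem: every bounded subsequence of each $(u_i^{(n)})_n$ has, after passing to a further subsequence, either the weakly Cauchy alternative---which in a Schur space is norm-Cauchy, hence norm-convergent---or is equivalent to the $\ell_1$-basis. One then has to rule out, or directly handle, the $\ell_1$-spread alternative using the determinant hypothesis, and it is exactly this taming of the moving optimal functionals via the Schur structure that I regard as the crux of the proof; the cofactor computation and the determinant continuity from \Cref{lem samir} are the routine ingredients. (If the intended scope is bounded sequences, as in the rotundity applications, boundedness is available at the outset and only the convergence extraction is needed.)
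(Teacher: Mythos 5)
Your $(2)\Rightarrow(1)$ argument is correct, though heavier than necessary: in place of your Bessaga--Pe\l czy\'nski basic-sequence extraction with extended coordinate functionals, the paper builds for each $n$ a triangular system directly by Hahn--Banach (take $f_n^{(1)}\in S_{X^*}$ with $f_n^{(1)}(x_n^{(1)})=\|x_n^{(1)}\|$, then $x_n^{(i)}\in\bigcap_{j<i}\ker(f_n^{(j)})\cap S_X$ and $f_n^{(i)}(x_n^{(i)})=1$), so that $D_k[0,(x_n^{(i)})_{i=1}^{k};(f_n^{(j)})_{j=1}^{k}]=\|x_n^{(1)}\|$ while the fixed-functional determinants vanish by \Cref{lem samir}; this proves the implication directly rather than by contradiction. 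The genuine problem is in your $(1)\Rightarrow(2)$.

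There you have the paper's opening move exactly (the cofactor expansion giving $f_1(w_n)=D_k\to 0$ for every $f_1$, hence $\|w_n\|\to 0$ by Schur), but you then misdiagnose what remains and your proposed repair does not close. Your plan --- extract norm-convergent subsequences of the difference vectors via Rosenthal's $\ell_1$-theorem and pass to the limit using the estimate behind \Cref{lem samir} --- founders on the $\ell_1$-alternative, which you explicitly leave open; and it genuinely cannot be ruled out, since in a Schur space such as $\ell_1$ the determinant hypothesis is compatible with $\ell_1$-spreads (take $u_1^{(n)}=u_2^{(n)}=e_n$: all determinants vanish for degenerate reasons while no subsequence is weakly Cauchy), so ``directly handling'' that alternative is the entire difficulty and your proof stops precisely where it begins. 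Moreover, statement $(2)$ does not assume the sequences bounded, so Rosenthal is not even available at the outset. The missing idea is that no uniformity over $(S_{X^*})^{k-1}$ is needed at all: choose for each $n$ near-optimal functionals $f_n^{(1)},\dots,f_n^{(k)}\in S_{X^*}$ with $V[(x_n^{(i)})_{i=1}^{k+1}]\le D_k[(x_n^{(i)})_{i=1}^{k+1};(f_n^{(j)})_{j=1}^{k}]+\frac{1}{n},$ and substitute them into the determinant one row at a time. At stage $m$, fix $f_{m+1},\dots,f_k\in S_{X^*}$ and expand $D_k[(x_n^{(i)})_{i=1}^{k+1};f_n^{(1)},\dots,f_n^{(m-1)},f_m,\dots,f_k]$ along the row of $f_m$: the resulting cofactor vector $w_n$ does not depend on $f_m$, and the statement $f_m(w_n)\to 0$ for every fixed $f_m$ is precisely the conclusion of stage $m-1$ (stage $0$ being the hypothesis); Schur's property upgrades this to $\|w_n\|\to 0$, so $f_n^{(m)}(w_n)\to 0$, completing stage $m$. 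After $k$ stages, $D_k[(x_n^{(i)})_{i=1}^{k+1};(f_n^{(j)})_{j=1}^{k}]\to 0$ and hence $V\to 0$, using only $k$ successive applications of Schur's property --- no boundedness, no subsequence extraction, and no compactness of the dual sphere.
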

\begin{proof}
	$(1) \Rightarrow (2)$: Let $(x_n^{(1)}), (x_n^{(2)}), \dots, (x_n^{(k+1)})$ be $(k+1)-$sequences in $X.$ Assume that $D_k[(x_n^{(i)})_{i=1}^{k+1}; (f_j)_{j=1}^{k}] \to 0$ for all $f_1, f_2, \dots, f_k \in S_{X^*}.$ Observe that, there exist $(k)-$sequences $(f_n^{(1)}), (f_n^{(2)}), \dots, (f_n^{(k)})$ in $S_{X^*}$ such that $V[(x_n^{(i)})_{i=1}^{k+1}] \leq D_k[(x_n^{(i)})_{i=1}^{k+1}; (f_n^{(j)})_{j=1}^{k}] + \frac{1}{n}$
	for all $n \in \mathbb{N}.$ Now, it is enough to show that  $D_k[(x_n^{(i)})_{i=1}^{k+1}; (f_n^{(j)})_{j=1}^{k} ] \to 0.$\\
	\justifying{Step$-(1)$: Fix   $f_2, f_3, \dots, f_{k} \in S_{X^*}.$ For any $f_1 \in S_{X^*},$ by evaluating  $D_k[(x_n^{(i)})_{i=1}^{k+1}; (f_j)_{j=1}^{k}]$ along the $2^{nd}$ row, we have $D_k[(x_n^{(i)})_{i=1}^{k+1}; (f_j)_{j=1}^{k}]=f_1(\sum_{i=1}^{k+1} (-1)^{(2+i)}x_n^{(i)}M_n^{(2,i)}),$ where for any $1 \leq i \leq k+1$ and $n \in \mathbb{N},$ $M_n^{(2,i)}$ denotes the minor of the $(2,i)^{th}$ entry of the determinant $D_k[(x_n^{(i)})_{i=1}^{k+1}; (f_j)_{j=1}^{k}].$  By the assumption,
	$f_1(\sum_{i=1}^{k+1} (-1)^{(2+i)}x_n^{(i)}M_n^{(2,i)}) \to 0 $
	for all $f_1 \in S_{X^*}$ and hence, by $(1),$ we have $\|\sum_{i=1}^{k+1} (-1)^{(2+i)}x_n^{(i)}M_n^{(2,i)}\| \to 0,$ which further implies $f_n^{(1)}(\sum_{i=1}^{k+1} (-1)^{(2+i)}x_n^{(i)}M_n^{(2,i)}) \to 0.$ Therefore, $D_k[(x_n^{(i)})_{i=1}^{k+1}; f_n^{(1)},(f_j)_{j=2}^{k}] \to 0$.}\\
	Step$-(2)$: Fix   $f_3, f_4, \dots, f_{k} \in S_{X^*}.$ For any $f_2 \in S_{X^*},$ by evaluating  $D_k[(x_n^{(i)})_{i=1}^{k+1}; f_n^{(1)},(f_j)_{j=2}^{k}]$ along the $3^{rd}$ row, we have $D_k[(x_n^{(i)})_{i=1}^{k+1}; f_n^{(1)},(f_j)_{j=2}^{k}]=f_2(\sum_{i=1}^{k+1} (-1)^{(3+i)}x_n^{(i)}M_n^{(3,i)}),$ where for any $1 \leq i \leq k+1$ and $n \in \mathbb{N},$  $M_n^{(3,i)}$ denotes the minor of the $(3,i)^{th}$ entry of the determinant $D_k[(x_n^{(i)})_{i=1}^{k+1}; f_n^{(1)},(f_j)_{j=2}^{k}].$
	Now, by using the similar argument involved in Step$-(1),$ we have $D_k[(x_n^{(i)})_{i=1}^{k+1}; f_n^{(1)},f_n^{(2)},(f_j)_{j=3}^{k}] \to 0$.\\
	By repeating the same procedure up to Step$-(k),$ we get $D_k[(x_n^{(i)})_{i=1}^{k+1}; (f_n^{(j)})_{j=1}^{k} ] \to 0.$\\
	$(2) \Rightarrow (1)$: If $X$ is a finite dimensional space, then there is nothing to prove. Let  $X$ be  an  infinite dimensional space and  $(x_n^{(1)})$ be a sequence in $X$ such that $x_n^{(1)} \xrightarrow{w} 0.$ For each $n \in \mathbb{N},$ by Hahn-Banach theorem, there exists $f_n^{(1)} \in S_{X^*}$ such that $f_n^{(1)}(x_n^{(1)})=\|x_n^{(1)}\|.$ Now, for every $n \in \mathbb{N}$ and  $2 \leq i \leq k,$ there exists $x_n^{(i)} \in \cap_{j=1}^{i-1} ker(f_n^{(j)}) \cap S_{X}$ and by  Hahn-Banach theorem, there exists $f_n^{(i)} \in S_{X^*}$ such that $f_n^{(i)}(x_n^{(i)})=1.$  Since $x_n^{(1)} \xrightarrow{w} 0$ and $(x_n^{(i)})$ are bounded sequences for all $2 \leq i \leq k,$ by \Cref{lem samir}, we have $D_k[0,(x_n^{(i)})_{i=1}^{k}; (g_j)_{j=1}^{k}] \to 0$ for all $g_1, g_2, \dots, g_k \in S_{X^*}.$ Therefore, by $(2),$ we have $V[0,(x_n^{(i)})_{i=1}^{k}] \to 0,$ which further implies, $ D_k[0,(x_n^{(i)})_{i=1}^{k}; (f_n^{(j)})_{j=1}^{k}] \to 0.$ Since $D_k[0,(x_n^{(i)})_{i=1}^{k}; (f_n^{(j)})_{j=1}^{k}]=\|x_n^{(1)}\|,$ it follows that $x_n^{(1)} \to 0.$ Hence the proof.
\end{proof}

In the following definition, we introduce two notions called $k-$weakly strong Chebyshevness  and $k-$weakly uniformly strong Chebyshevness which are weaker to the notions $k-$strong Chebyshevness \cite{VeRS2021} and $k-$uniformly strong Chebyshevness \cite{KaVe2018} respectively. These new notions will be used to characterize $k-$WUR, $k-$WMLUR spaces in Section $3$.

\begin{definition}\label{def k-SCh}
	Let $A$ and $B$  be non-empty  subsets of $X,$ $x \in X$ and $k \in \mathbb{Z}^{+}.$ Then we say that $A$ is
	\begin{enumerate}
		\item  $k-$weakly strongly Chebyshev (in short, $k-$$w$SCh) at $x,$ if $A$ is proximinal at $x$ and  for every $\epsilon>0,$ $f_1,f_2,\dots,f_k \in S_{X^*}$ there exists $\delta=\delta(\epsilon,x,(f_j)_{j=1}^{k})>0$ such that $\vert D_k[(x_i)_{i=1}^{k+1};(f_j)_{j=1}^{k}]\vert \leq \epsilon$ whenever $x_1,x_2,\dots,x_{k+1} \in P_A(x,\delta);$
		\item $k-$$w$SCh on $B,$ if $A$ is $k-$$w$SCh at every $x \in B;$
		\item $k-$weakly uniformly strongly Chebyshev (in short, $k-$$w$USCh) on $B,$ if $A$ is proximinal on $B$ and  for every $\epsilon>0,$ $f_1,f_2,\dots,f_k \in S_{X^*}$ there exists $\delta=\delta(\epsilon,(f_j)_{j=1}^{k})>0$ such that $\vert D_k[(x_i)_{i=1}^{k+1};(f_j)_{j=1}^{k}]\vert \leq \epsilon$ whenever $x_1,x_2,\dots,x_{k+1} \in P_A(x,\delta)$ and $x \in B.$
	\end{enumerate}
\end{definition}
The notion  $1-$$w$SCh (respectively, $1-$$w$USCh)  coincides with the notion weakly strongly Chebyshev \cite{BLLN2008, GaTh2022} (respectively, weakly uniformly strongly Chebyshev \cite{GaTh2022}).

Observe that, $A$ is $k-$$w$USCh on $B$  $\Rightarrow$ $A$ is $k-$$w$SCh on $B$ $\Rightarrow$ $A$ is $k-$Chebyshev on $B.$ In  \Cref{eg Ch not wSCh,eg converse Chev2}, we will see that the reverse implications are not necessarily true. Further, $A$ is $k-$USCh (respectively, $k-$SCh) on $B$ $\Rightarrow$ $A$ is  $k-$$w$USCh (respectively, $k-$$w$SCh) on $B,$ in general the converse does not hold (see, \Cref{eg converse Chev}), however, using \Cref{lem Schur}, the converse holds whenever the space has Schur's property.

\begin{example}\label{eg Ch not wSCh}
	 Consider the space $X=(\ell_1, \|\cdot\|_H)$ from \cite[Example 5]{Smit1978a} and $k \in \mathbb{Z}^{+}.$ In \cite{Smit1978a}, it is proved that $X$ is rotund, but not MLUR. Then, by \cite[Theorems 5.1.18 and 5.3.28]{Megg1998}, it follows that  $B_X$ is Chebyshev on $X,$ but not approximatively compact on $X$ (see, \cite[Definition 1.1]{BLLN2008}).  Therefore, by \cite[Lemma 2.8]{VeRS2021}, $B_X$ is not $k-$SCh on $X.$ Since $X$ has Schur's property, we have $B_X$ is not $k-$$w$SCh on $X.$ However, $B_X$ is $k-$Chebyshev on $X.$
\end{example}
The following sequential version of \Cref{def k-SCh} is easy to verify and will be used further.
\begin{proposition}\label{prop kwuch}
	Let $A$ and $B$ be  non-empty subsets of $X$ and $x \in X.$ Then the following statements hold.
	\begin{enumerate}
		\item $A$ is $k-$$w$SCh at $x$ iff  $A$ is proximinal at $x$ and  for any $(k+1)-$sequences $(x_n^{(1)}), (x_n^{(2)}), \dots,$ $ (x_n^{(k+1)})$  in $A$ such that $\|x_n^{(i)}-x\| \to d(x,A)$ for all $1 \leq i \leq k+1,$ it follows that $D_k[(x_n^{(i)})_{i=1}^{k+1};(f_j)_{j=1}^{k}] \to 0$ for all $f_1,f_2,\dots,f_k \in S_{X^*}.$
		\item $A$ is $k-$$w$USCh on $B$ iff $A$ is proximinal on $B$ and for any $(k+1)-$sequences $(x_n^{(1)}), (x_n^{(2)}),$ $ \dots, (x_n^{(k+1)})$  in $A,$ a sequence  $(y_n)$  in $B$ such that $\|x_n^{(i)}-y_n\|-d(y_n,A) \to 0$ for all $1 \leq i \leq k+1,$ it follows that $D_k[(x_n^{(i)})_{i=1}^{k+1};(f_j)_{j=1}^{k}] \to 0$ for all $f_1,f_2,\dots,f_k \in S_{X^*}.$		
	\end{enumerate}
\end{proposition}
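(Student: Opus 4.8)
The plan is to prove both equivalences by the standard translation between the $\epsilon$-$\delta$ formulation of \Cref{def k-SCh} and the sequential statements, using the device $\delta=1/n$ in the contrapositive direction. In both parts the proximinality hypothesis appears on both sides and simply passes through, so the only content is the equivalence of the quantified determinant bound with the limiting condition. The two parts have identical structure, so I would write $(1)$ in full and then note that $(2)$ follows by the same argument with the fixed point $x$ replaced by a varying sequence $(y_n)$ in $B$.

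For the forward implication of $(1)$, I would assume $A$ is $k-$$w$SCh at $x$ and fix arbitrary $f_1,f_2,\dots,f_k \in S_{X^*}$ together with $\epsilon>0$. The definition supplies a $\delta=\delta(\epsilon,x,(f_j)_{j=1}^{k})>0$ controlling $\vert D_k[\,\cdot\,;(f_j)_{j=1}^{k}]\vert$ on tuples from $P_A(x,\delta)$. Given $(k+1)$-sequences $(x_n^{(i)})$ in $A$ with $\|x_n^{(i)}-x\| \to d(x,A)$, I would observe that eventually $\|x_n^{(i)}-x\| \leq d(x,A)+\delta$ for every $i$ simultaneously (take the maximum of the finitely many indices $N_i$), so that $x_n^{(1)},\dots,x_n^{(k+1)} \in P_A(x,\delta)$ for all large $n$. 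Hence $\vert D_k[(x_n^{(i)})_{i=1}^{k+1};(f_j)_{j=1}^{k}]\vert \leq \epsilon$ eventually, and as $\epsilon$ was arbitrary this yields $D_k[(x_n^{(i)})_{i=1}^{k+1};(f_j)_{j=1}^{k}] \to 0$.

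For the converse of $(1)$, I would argue by contradiction. If $A$ is proximinal at $x$ but fails to be $k-$$w$SCh at $x$, then there exist $\epsilon>0$ and $f_1,\dots,f_k \in S_{X^*}$ such that for every $\delta>0$ one finds a tuple in $P_A(x,\delta)$ violating the bound. Specializing $\delta=1/n$ produces $x_n^{(1)},\dots,x_n^{(k+1)} \in P_A(x,1/n)$ with $\vert D_k[(x_n^{(i)})_{i=1}^{k+1};(f_j)_{j=1}^{k}]\vert > \epsilon$. Since $d(x,A) \leq \|x_n^{(i)}-x\| \leq d(x,A)+1/n$, these are minimizing sequences, so the sequential hypothesis forces $D_k[(x_n^{(i)})_{i=1}^{k+1};(f_j)_{j=1}^{k}] \to 0$, contradicting the persistent lower bound $\epsilon$.

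For $(2)$, the only change is that the condition in \Cref{def k-SCh} is required uniformly over all $x \in B$; correspondingly, in the converse I would extract, for $\delta=1/n$, a point $y_n \in B$ together with $x_n^{(1)},\dots,x_n^{(k+1)} \in P_A(y_n,1/n)$ for which the determinant exceeds $\epsilon$, and the estimate $\|x_n^{(i)}-y_n\|-d(y_n,A) \leq 1/n \to 0$ then matches the hypothesis verbatim. There is no serious obstacle here, since the argument is purely a reformulation of a quantifier over $\delta$ as a limit; the one point requiring care is the order of quantifiers. The $\delta$ in both directions is chosen only after fixing $\epsilon$ and the functionals $f_1,\dots,f_k$ (and, for $(1)$, after fixing $x$), so when negating I must keep $\epsilon$ and the tuple $(f_j)_{j=1}^{k}$ fixed while letting $\delta \to 0$. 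Preserving this single $\epsilon$ and single functional-tuple along the extracted sequence is precisely what makes the sequential hypothesis applicable and produces the contradiction.
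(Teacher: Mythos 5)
Your proposal is correct: the paper offers no proof of this proposition, dismissing it as ``easy to verify,'' and your argument is precisely the intended routine translation between the $\epsilon$--$\delta$ formulation of the definition and the sequential statements, with the $\delta=1/n$ extraction in the contrapositive direction. The one point that genuinely requires care --- keeping the single $\epsilon$ and the single tuple $(f_j)_{j=1}^{k}$ fixed while $\delta\to 0$, so that the sequential hypothesis applies to the extracted minimizing sequences --- is exactly the point you identify and handle correctly.
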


Now, we introduce a notion called property $k-$weakly UC which is a generalization of both property $w$UC \cite{GaTh2022} and property $k-$UC \cite{KaVe2018}.
\begin{definition}\label{def property kwuc}
	Let  $A$ and $B$ be  non-empty subsets of $X$ and $k \in \mathbb{Z^+}.$ The pair $(A,B)$ is said to have property $k-$weakly UC (in short, property $k-$$w$UC), if for any $(k+1)-$sequences $(x_n^{(1)}),(x_n^{(2)}), \dots, (x_n^{(k+1)})$ in $A$ and a sequence  $(y_n)$ in $B$ such that $\|x_n^{(i)}-y_n\| \to d(A,B)$  for all $1 \leq i \leq k+1,$ it follows that $D_k[(x_n^{(i)})_{i=1}^{k+1}; (f_j)_{j=1}^{k}] \to 0$ for all $f_1,f_2,\dots,f_k \in S_{X^*}.$
\end{definition}

The property $k-$$w$UC coincides with property $w$UC  \cite{GaTh2022}  for the case $k=1.$ Further, if $(A,B)$ has property $k-$UC, then $(A,B)$ has property $k-$$w$UC. The converse does not hold in general (see, \Cref{eg converse Chev}). However, using \Cref{lem Schur}, the converse holds, whenever the space has Schur's property.

The following result is a consequence of \Cref{prop det prop}. On the other hand, it reveals that if a pair of subsets has property $w$UC, then  it has property $k-$$w$UC and a similar statement holds for the notions $k-$$w$USCh and $k-$$w$SCh.

\begin{proposition}\label{prop wUSCh (k+1)}
	Let $A$ and $B$  be  non-empty  subsets of $X$ and $x \in X.$ Then the following statements hold.
	\begin{enumerate}
		\item If $(A,B)$ has property $k-$$w$UC, then $(A,B)$ has property $(k+1)-$$w$UC.
		\item If $A$ is $k-$$w$USCh on $B,$ then $A$ is $(k+1)-$$w$USCh on $B.$
		\item If $A$ is $k-$$w$SCh at $x,$ then $A$ is $(k+1)-$$w$SCh at $x.$
	\end{enumerate}
\end{proposition}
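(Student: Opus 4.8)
The plan is to deduce all three statements from \Cref{prop det prop}, combined with the sequential form of property $k$-$w$UC in \Cref{def property kwuc} and the sequential characterizations of $k$-$w$SCh and $k$-$w$USCh in \Cref{prop kwuch}. The common mechanism is that each of the three hypotheses forces the relevant $k$-dimensional determinants to vanish along \emph{every} sub-selection of $k+1$ of the given $k+2$ sequences, and this is precisely the input that \Cref{prop det prop} needs in order to conclude that the associated $(k+1)$-dimensional determinant also vanishes.

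I would carry out $(1)$ first and in full, since $(2)$ and $(3)$ are formally identical. Assume $(A,B)$ has property $k$-$w$UC, fix $(k+2)$-sequences $(x_n^{(1)}), \dots, (x_n^{(k+2)})$ in $A$ and a sequence $(y_n)$ in $B$ with $\|x_n^{(i)}-y_n\| \to d(A,B)$ for every $1 \le i \le k+2$, and fix $f_1, \dots, f_{k+1} \in S_{X^*}$. To invoke \Cref{prop det prop} I must verify its hypothesis, namely that $D_k[(x_n^{(\alpha_i)})_{i=1}^{k+1};(f_{\beta_j})_{j=1}^{k}] \to 0$ for every $\alpha \in \mathcal{S}_{k+2}(k+1)$ and $\beta \in \mathcal{S}_{k+1}(k)$. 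Fixing such $\alpha$ and $\beta$, the sequences $(x_n^{(\alpha_1)}), \dots, (x_n^{(\alpha_{k+1})})$ form $(k+1)$-sequences in $A$, and each of them still satisfies $\|x_n^{(\alpha_i)}-y_n\| \to d(A,B)$ simply because the approximation condition is imposed index-by-index and hence descends to any sub-selection. Property $k$-$w$UC therefore applies to these $k+1$ sequences and gives $D_k[(x_n^{(\alpha_i)})_{i=1}^{k+1};(g_j)_{j=1}^{k}] \to 0$ for all $g_1, \dots, g_k \in S_{X^*}$; in particular, taking $g_j = f_{\beta_j}$ yields the required convergence. Applying \Cref{prop det prop} now gives $D_{k+1}[(x_n^{(i)})_{i=1}^{k+2};(f_j)_{j=1}^{k+1}] \to 0$, which is exactly property $(k+1)$-$w$UC.

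For $(2)$ and $(3)$ I would repeat this template verbatim, replacing the approximation condition of property $k$-$w$UC by the one appearing in the sequential characterization of \Cref{prop kwuch}: for $(2)$ the condition $\|x_n^{(i)}-y_n\| - d(y_n,A) \to 0$ with $(y_n)$ in $B$, and for $(3)$ the condition $\|x_n^{(i)}-x\| \to d(x,A)$ at the fixed point $x$; proximinality of $A$ (on $B$, respectively at $x$) does not depend on $k$ and carries over unchanged. In each case the relevant condition is again index-wise, so it restricts to every $\alpha$-subfamily, the $k$-version notion forces the $k$-determinants along those subfamilies to zero, and \Cref{prop det prop} upgrades this to vanishing of the $(k+1)$-determinant. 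Reading the resulting conclusion through \Cref{prop kwuch} applied with $k+1$ in place of $k$ identifies it as the asserted $(k+1)$-version of the notion.

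I do not expect a genuine obstacle here: the entire difficulty is packaged inside \Cref{prop det prop}, whose Sylvester-identity argument already handles the nontrivial combinatorial passage from dimension $k$ to $k+1$. The only point requiring care is the elementary but essential observation that the approximation hypotheses are stated separately for each index and therefore survive restriction to sub-selections of the sequences; once this is noted, all three implications follow immediately.
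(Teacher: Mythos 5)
Your proof is correct and follows essentially the same route as the paper: both verify, via the sequential formulations, that the approximation hypothesis restricts to every $(k+1)$-element sub-selection of the $k+2$ sequences, apply the $k$-version property to kill the $k$-determinants $D_k[(x_n^{(\alpha_i)})_{i=1}^{k+1};(f_{\beta_j})_{j=1}^{k}]$ for all $\alpha \in \mathcal{S}_{k+2}(k+1)$, $\beta \in \mathcal{S}_{k+1}(k)$, and then invoke \Cref{prop det prop} to conclude the $(k+1)$-determinant vanishes, with $(2)$ and $(3)$ handled by the identical template. The paper's proof is just a terser version of yours, leaving the sub-selection observation implicit.
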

\begin{proof}
	$(1)$: Let $(x_n^{(1)}), (x_n^{(2)}), \dots, (x_n^{(k+2)})$ be $(k+2)-$sequences in $A,$ $(y_n)$ be a sequence  in $B$ such that $\|x_n^{(i)}-y_n\|\to d(A, B)$ for all $1 \leq i \leq k+2$ and $f_1, f_2, \dots, f_{k+1} \in S_{X^*}.$  By assumption, it follows that   $D_k[(x_n^{(\alpha_i)})_{i=1}^{k+1}; (f_{\beta{j}})_{j=1}^{k}] \to 0$ for all $\alpha \in \mathcal{S}_{k+2}(k+1)$ and $\beta \in \mathcal{S}_{k+1}(k).$ Hence, by \Cref{prop det prop}, we have $D_{k+1}[(x_n^{(i)})_{i=1}^{k+2}; (f_j)_{j=1}^{k+1}] \to 0.$ Thus, $(A,B)$ has property $(k+1)-$$w$UC.
	
	 The proofs of $(2)$ and $(3)$ follow in the similar lines of proof of $(1).$	
\end{proof}
We remark that the converses of the statements of \Cref{prop wUSCh (k+1)} need not be true for any $k \in \mathbb{Z}^{+}$ (see, \Cref{eg k+1 not k USCh}).

In the following  proposition and remark, we present some relations among the notions $k-$$w$SCh, $k-$$w$USCh and property $k-$$w$UC.
 \begin{proposition}\label{kwusch kwuc}
 	Let $A$ and $B$ be non-empty subsets of $X.$ Then the following statements hold.
 	\begin{enumerate}
 		\item If $A$ is $k-$$w$USCh on $B,$ then $(A,B)$ has property $k-$$w$UC.
 		\item If $(A,B)$ has property $k-$$w$UC, then $A$ is $k-$$w$USCh on $B_0,$ where $B_0=\{y \in B: \|x-y\|=d(A,B)$ for some $x \in A \}.$
 		\end{enumerate}
 		\end{proposition}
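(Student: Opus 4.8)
The plan is to reduce both parts to the sequential characterization of $k-$$w$USCh recorded in \Cref{prop kwuch}$(2)$ and to the definition of property $k-$$w$UC in \Cref{def property kwuc}, and then to control the distances involved by a short squeezing argument. The single elementary observation driving everything is that for any $y\in B$ and any $x\in A$ one has $d(A,B)\le d(y,A)\le\|x-y\|$, the left inequality because $y\in B$ (so the infimum defining $d(y,A)$ is taken over a subfamily of the pairs defining $d(A,B)$) and the right inequality because $x\in A$.

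For $(1)$, I would begin with arbitrary $(k+1)-$sequences $(x_n^{(1)}),(x_n^{(2)}),\dots,(x_n^{(k+1)})$ in $A$ and a sequence $(y_n)$ in $B$ satisfying $\|x_n^{(i)}-y_n\|\to d(A,B)$ for all $1\le i\le k+1$, as required in \Cref{def property kwuc}. Applying the above inequality with $y=y_n$ gives $d(A,B)\le d(y_n,A)\le\|x_n^{(i)}-y_n\|$ for each $i$, so the squeeze theorem forces $d(y_n,A)\to d(A,B)$ and hence $\|x_n^{(i)}-y_n\|-d(y_n,A)\to 0$ for every $i$. This is precisely the hypothesis needed to invoke the sequential form of $k-$$w$USCh in \Cref{prop kwuch}$(2)$, which then yields $D_k[(x_n^{(i)})_{i=1}^{k+1};(f_j)_{j=1}^{k}]\to 0$ for all $f_1,f_2,\dots,f_k\in S_{X^*}$; this is exactly property $k-$$w$UC.

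For $(2)$, I would first check proximinality on $B_0$, which property $k-$$w$UC does not itself assert: given $y\in B_0$ there is $x\in A$ with $\|x-y\|=d(A,B)$, and since $d(A,B)\le d(y,A)\le\|x-y\|=d(A,B)$, we obtain $d(y,A)=d(A,B)$ together with $x\in P_A(y)$, so $A$ is proximinal at $y$. The same computation records the key fact that $d(y,A)=d(A,B)$ for every $y\in B_0$. I would then take $(k+1)-$sequences $(x_n^{(i)})$ in $A$ and a sequence $(y_n)$ in $B_0$ with $\|x_n^{(i)}-y_n\|-d(y_n,A)\to 0$; substituting the identity $d(y_n,A)=d(A,B)$ turns this into $\|x_n^{(i)}-y_n\|\to d(A,B)$, whereupon property $k-$$w$UC applies directly and delivers the determinant convergence demanded by \Cref{prop kwuch}$(2)$, proving $A$ is $k-$$w$USCh on $B_0$.

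Both directions are therefore short once the distance bookkeeping is arranged, and I do not anticipate a serious obstacle. The only points that require attention are the squeeze in $(1)$, where $d(y_n,A)$ genuinely varies with $n$ and is not a priori equal to $d(A,B)$ (in contrast to $(2)$, where membership in $B_0$ pins it down exactly), and, in $(2)$, the separate verification of proximinality on $B_0$, since property $k-$$w$UC alone guarantees only the determinant convergence and not the existence of best approximations.
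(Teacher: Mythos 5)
Your proof is correct and follows essentially the same route as the paper: both parts reduce to the sequential characterization in \Cref{prop kwuch}$(2)$ via the squeeze $d(A,B)\le d(y_n,A)\le\|x_n^{(i)}-y_n\|$ in $(1)$ and the identity $d(y_n,A)=d(A,B)$ for $y_n\in B_0$ in $(2)$. The only difference is that you spell out the proximinality of $A$ on $B_0$, which the paper dismisses with ``Clearly''---a worthwhile clarification, not a deviation.
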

 \begin{proof} $(1)$: Let $(x_n^{(1)}), (x_n^{(2)}), \dots, (x_n^{(k+1)})$ be $(k+1)-$sequences in $A$ and  $(y_n)$ be a sequence in $B$ such that $\|x_n^{(i)}-y_n\| \to d(A,B)$ for all $1 \leq i \leq k+1.$ Since, for any $1 \leq i \leq k+1,$
 	\[
 	0 \leq \|x_n^{(i)}-y_n\|-d(y_n,A) \leq \|x_n^{(i)}-y_n\|-d(A,B),
 	\]
 	 we have $\|x_n^{(i)}-y_n\|-d(y_n,A) \to 0.$ Thus, by assumption, it follows that
  $D_k[(x_n^{(i)})_{i=1}^{k+1};(f_j)_{j=1}^{k}] \to 0$ for all $f_1,f_2,\dots,f_k \in S_{X^*}.$ Hence, $(A,B)$ has property $k-$$w$UC.\\
  $(2)$: Clearly, $A$ is proximinal on $B_0.$ Let  $(x_n^{(1)}),(x_n^{(2)}),\dots,(x_n^{(k+1)})$ be $(k+1)-$sequences in A and  $(y_n)$ be a sequence in $B_0$ such that $\|x_n^{(i)}-y_n\|- d(y_n,A) \to 0$ for all $1 \leq i \leq k+1.$ Since $d(y_n,A)= d(A,B)$ for all $n \in \mathbb{N},$ we have $\|x_n^{(i)}-y_n\| \to d(A,B)$ for all $1 \leq i \leq k+1.$  Therefore, by assumption, it follows that $D_k[(x_n^{(i)})_{i=1}^{k+1};(f_j)_{j=1}^{k}] \to 0$ for all $f_1,f_2,\dots,f_k \in S_{X^*}.$ Thus, $A$ is $k-$$w$USCh on $B_0.$
 \end{proof}

 \begin{remark}\label{rem boundely compact}
 	Let $A$ be a non-empty bounded subset of $X$ and  $B$ be a non-empty boundedly compact subset of $X.$ If $A$ is $k-$$w$SCh on $B,$ then $(A,B)$ has property $k-$$w$UC.
 \end{remark}
The next example shows that  the converse of the statements of \Cref{kwusch kwuc} and \Cref{rem boundely compact} need not be true. In particular, property $k-$$w$UC of the pair $(A,B)$ is not sufficient for the proximinality of $A$ on $B.$%
\begin{example}\label{eg converse}\
	\begin{enumerate}
		\item Let $k \in \mathbb{Z}^{+}$ and $M=(c_0, \|\cdot\|_{\infty}).$ By \cite[Chapter II, Corollary 6.9]{DeGZ1993}, $M$ admits an equivalent norm (say, $\|\cdot\|_r$) such that $X=(c_0, \|\cdot\|_r)$ is WUR. Since $X$ is not reflexive, there exists a  subspace $Y$ of $X$ such that $Y$ is not  proximinal at some $x \in X.$ However, by \cite[Theorem 4.6]{GaTh2022}, $(Y, \{x\})$ has property $w$UC and hence, by \Cref{prop wUSCh (k+1)}, it has property $k-$$w$UC.
		\item Let $k \in \mathbb{Z}^{+}$ and $X= (\mathbb{R}^{k+1}, \|\cdot\|_{\infty}).$ Consider $A= B_X$ and $B= 3S_X \cup \{2(\sum_{i=1}^{k+1}e_i)\}.$ It is easy to prove that $(A,B)$ has property $w$UC and hence, by \Cref{prop wUSCh (k+1)}, it has property $k-$$w$UC. However, it is clear that $A$ is not $k-$Chebyshev at $3e_1 \in B.$ Thus, $A$ is not $k-$$w$USCh on $B.$
		\item Let $k \in \mathbb{Z}^{+},$ $X= (\mathbb{R}^{k+1}, \|\cdot\|_2) \oplus_{\infty} \mathbb{R}$ and $Y =(\mathbb{R}^{k+1}, \|\cdot\|_2) \oplus_{\infty} \{0\}$ be the subspace of $X.$ Consider  $A=B_Y$ and $B=\{(2e_1,0)\} \cup \{(0, 1+\frac{1}{n}): n \in \mathbb{N}\}.$ Observe that $d(A,B)=1$ and $B_0=\{y \in B: \|x-y\|=d(A,B) \text{ for some } x \in A \}=\{(2e_1,0)\}.$ Clearly, $A$ is $k-$$w$USCh on $B_0.$ For all $n \in \mathbb{N}$ and  $1 \leq i \leq k+1,$ define $x_n^{(i)}=(e_i,0)$ and $y_n=(0, 1+\frac{1}{n}).$ Therefore, $\|x_n^{(i)}-y_n\| \to 1$ for all $1 \leq i \leq k+1,$ but by \Cref{remark vol}, there exists $g_1, g_2, \dots, g_{k} \in S_{X^*}$ such that  $D_k[(x_n^{(i)})_{i=1}^{k+1}; (g_j)_{j=1}^{k}]= \epsilon$ for some $\epsilon>0.$  Thus, $(A,B)$ does not have  property $k-$$w$UC.
	\end{enumerate}
	\end{example}

The  proof of the subsequent result follows in similar lines of the proof  of \cite[Theorem 2.12]{GaTh2023}.
\begin{theorem}\label{thrm wUSCh}\
	The following statements hold.
	\begin{enumerate}
		\item If $B_X$ is $k-$$w$USCh (respectively, $k-$$w$SCh) on $rS_X$ for some $r\in (1, \infty),$ then $B_X$ is $k-$$w$USCh (respectively, $k-$$w$SCh) on $tS_X$ for every $t \in (1, \infty).$
		\item If $S_X$ is $k-$$w$USCh (respectively, $k-$$w$SCh) on $rS_X$ for some $r \in (1, \infty),$ then $S_X$ is $k-$$w$USCh (respectively, $k-$$w$SCh) on $tS_X$ for every $t \in (1, \infty).$
		\item If $S_X$ is $k-$$w$USCh (respectively, $k-$$w$SCh) on $rS_X$ for some $r \in (0,1),$ then $S_X$ is $k-$$w$USCh (respectively, $k-$$w$SCh) on $tS_X$ for every $t \in (0,\infty).$	
	\end{enumerate}	
\end{theorem}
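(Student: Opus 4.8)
The plan is to reduce every assertion about a sphere $tS_X$ to the hypothesis on the controlled sphere $rS_X$ by a radial comparison of near-best approximations, and to transport the information across using the behaviour of $D_k$ under affine maps recorded in \Cref{volume property}. Throughout I would argue via the sequential descriptions in \Cref{prop kwuch}. For the $w$USCh parts I take $(k+1)$-sequences $(x_n^{(i)})_{i=1}^{k+1}$ in $A$ and a sequence $(y_n)$ in the target sphere with $\|x_n^{(i)}-y_n\|-d(y_n,A)\to 0$, and I must produce $D_k[(x_n^{(i)})_{i=1}^{k+1};(f_j)_{j=1}^{k}]\to 0$ for all $f_1,\dots,f_k\in S_{X^*}$; the $w$SCh parts are identical with a single fixed target $y$. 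I would also record at the outset the elementary facts that for $\|y\|>1$ one has $d(y,B_X)=d(y,S_X)=\|y\|-1$ with radial best approximation $y/\|y\|$, and that consequently any near-best approximant satisfies $\|x_n^{(i)}\|\to 1$.

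For Part $(1)$ (hypothesis at some $r>1$) I split into two cases. If $t\le r$, set $\hat y_n=\tfrac{r}{t}y_n\in rS_X$; since $\tfrac rt\ge 1$, a one-line triangle estimate gives $\|x_n^{(i)}-\hat y_n\|-(r-1)\to 0$, so the \emph{same} points $x_n^{(i)}\in B_X$ are near-best approximants to $\hat y_n$ and the hypothesis on $rS_X$ yields $D_k[(x_n^{(i)})_{i=1}^{k+1};(f_j)]\to 0$ at once. The case $t>r$ is the heart of the matter: I keep $\hat y_n=\tfrac rt y_n\in rS_X$ but replace the approximants by
\[
\hat x_n^{(i)}=\frac{r-1}{t-1}\,x_n^{(i)}+\frac{t-r}{t-1}\,\frac{y_n}{t}.
\]
The two coefficients are nonnegative and sum to $1$, so $\hat x_n^{(i)}$ is a convex combination of $x_n^{(i)}\in B_X$ and $y_n/t\in S_X$ and hence lies in $B_X$; moreover $\hat x_n^{(i)}-\hat y_n=\tfrac{r-1}{t-1}(x_n^{(i)}-y_n)$, so $\|\hat x_n^{(i)}-\hat y_n\|-(r-1)\to 0$ and the $\hat x_n^{(i)}$ are near-best to $\hat y_n\in rS_X$. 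Since $\hat x_n^{(i)}=c\,x_n^{(i)}+w_n$ with the common scalar $c=\tfrac{r-1}{t-1}$ and common vector $w_n=\tfrac{t-r}{t-1}\tfrac{y_n}{t}$, the translation invariance and homogeneity in \Cref{volume property} give $D_k[(\hat x_n^{(i)})_{i=1}^{k+1};(f_j)]=c^{k}D_k[(x_n^{(i)})_{i=1}^{k+1};(f_j)]$; the hypothesis sends the left side to $0$, and as $c>0$ is fixed the desired convergence follows.

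Parts $(2)$ and $(3)$ follow the same template with $A=S_X$, the only new ingredient being that the modified approximants must be returned to the sphere. For Part $(2)$ the distance to $S_X$ from $\|y\|>1$ again equals $\|y\|-1$, so the identical construction applies; the convex combination $\hat x_n^{(i)}$ lands in $B_X$ with $\|\hat x_n^{(i)}\|\to 1$, and I replace it by its radial projection $\hat x_n^{(i)}/\|\hat x_n^{(i)}\|\in S_X$. Because the normalising scalars tend to $1$, \Cref{lem samir}$(1)$ shows this does not change the limit of the determinants, so the transfer goes through. Part $(3)$ is the analogue for interior targets: for $\|y\|=t<1$ one has $d(y,S_X)=1-t$ with radial best approximation $y/\|y\|$, and the same affine comparison with ratio $\tfrac{1-r}{1-t}$, followed by the radial projection of Part $(2)$, reduces each $t$ to the controlled sphere $rS_X$ with $r\in(0,1)$; I would separate the ranges $t<1$, $t=1$, $t>1$ by the sign of $\|y\|-1$ and link the interior and exterior ranges by composing two such reductions.

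The genuine obstacle is the distance-increasing reduction ($t>r$ in Part $(1)$, and its analogues): one must exhibit approximants that \emph{simultaneously} lie in $A$, are near-best for a point of the controlled sphere, and have mutual differences equal to a \emph{fixed} scalar multiple of the original ones, so that $D_k$ transfers with a nonzero factor. The convex-combination coefficients above are precisely what make all three requirements hold at once, and identifying them is the crucial step; the remaining cases are comparatively routine estimates. A secondary point is proximinality of $A$ on the target sphere, which is part of the conclusion: when $t>r$ it is immediate, since a best approximation to $\hat y=\tfrac rt y$ remains best for the outward rescaling $y=\tfrac tr\hat y$ (a norming functional of $\hat y$ simultaneously norms $y$); when $t<r$ I would note that a minimizing sequence for $y$ is also minimizing for $\hat y=\tfrac rt y$, so that a weak cluster point, together with weak lower semicontinuity of the norm, furnishes a best approximation to $y$. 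I would place these proximinality verifications before the determinant estimates in each part.
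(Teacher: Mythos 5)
The paper offers no in-text argument for this theorem (it only defers to the proof of \cite[Theorem 2.12]{GaTh2023}), so your proposal has to stand on its own. Parts $(1)$ and $(2)$ do: the convex combination $\hat x_n^{(i)}=\frac{r-1}{t-1}x_n^{(i)}+\frac{t-r}{t-1}\frac{y_n}{t}$ with $\hat y_n=\frac rt y_n$ indeed satisfies $\hat x_n^{(i)}-\hat y_n=\frac{r-1}{t-1}(x_n^{(i)}-y_n)$, lies in $B_X$, and transfers determinants by \Cref{volume property}$(1)$--$(2)$; and the exterior estimate $\|\hat x_n^{(i)}\|\geq \|\hat y_n\|-\|\hat x_n^{(i)}-\hat y_n\|\to r-(r-1)=1$ legitimizes the radial projection in Part $(2)$, with \Cref{lem samir}$(1)$ absorbing the normalizing scalars. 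One side remark: your proximinality digression is both unnecessary and partly wrong. It is unnecessary because $y/\|y\|$ is always a best approximation to $y\neq 0$ from $S_X$, and from $B_X$ when $\|y\|\geq 1$; and your weak-cluster-point argument would fail in a nonreflexive space, since a minimizing sequence need not have weak cluster points and $S_X$ is not weakly closed, so a weak limit could land strictly inside $B_X$.

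Part $(3)$ has a genuine gap, in two places, traceable to one missing idea. First, for interior targets $t<r<1$ you invoke ``the radial projection of Part $(2)$,'' but the estimate that justified projecting there is exterior-specific: for $\hat y_n\in rS_X$ with $r<1$ it yields only $\|\hat x_n^{(i)}\|\geq \|\hat y_n\|-\|\hat x_n^{(i)}-\hat y_n\|\to r-(1-r)=2r-1<1$, so you have no warrant that $\|\hat x_n^{(i)}\|\to 1$ and the projection step is unsupported. Second, ``composing two such reductions'' cannot bridge the hypothesis $r\in(0,1)$ to targets $t>1$: demanding $\hat x_n^{(i)}-\hat y_n=c(x_n^{(i)}-y_n)$ with $\hat y_n=\frac rt y_n$ forces $c=\frac{1-r}{1-t}<0$, so the comparison points are not convex combinations and nothing keeps them in (or near) $S_X$; your template simply does not cross the unit sphere. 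Both defects are repaired by the observation that is the real content of the interior case: if $f_n^{(i)}\in S_{X^*}$ norms $x_n^{(i)}$, then $f_n^{(i)}(y_n)\geq 1-\|x_n^{(i)}-y_n\|$ shows $f_n^{(i)}\bigl(y_n/\|y_n\|\bigr)\to 1$, hence \emph{every} convex combination of $x_n^{(i)}$ and $u_n\coloneqq y_n/\|y_n\|$ has norm tending to $1$. This rescues the projection when $t<r<1$, and for $t>1$ it lets you abandon proportionality to $x_n^{(i)}-y_n$ altogether: take $\hat y_n=ru_n$ and $\hat x_n^{(i)}=(1-r)x_n^{(i)}+ru_n$, so that $\hat x_n^{(i)}-\hat y_n=(1-r)x_n^{(i)}$ has norm exactly $1-r=d(\hat y_n,S_X)$, $\|\hat x_n^{(i)}\|\to 1$ by the norming functionals, and $D_k[(\hat x_n^{(i)})_{i=1}^{k+1};(f_j)_{j=1}^{k}]=(1-r)^kD_k[(x_n^{(i)})_{i=1}^{k+1};(f_j)_{j=1}^{k}]$ by \Cref{volume property}. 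With that supplement (and the trivial case $t=1$, where $\|x_n^{(i)}-y_n\|\to 0$ kills the determinants directly), Part $(3)$ closes; as written, it does not.
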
	
%In the next result, we present a characterization of $k-$rotund spaces in terms of $k-$rotundity of the corresponding quotient spaces.
Now, we present a characterization of $k-$rotund spaces in terms of $k-$rotundity of the quotient spaces.
\begin{theorem}\label{thrm quotiet k-rotund}
	Let $\alpha, \beta \in \mathbb{Z}^{+}$ and $X$ be a Banach space satisfying  $dim(X) \geq k+2,$ $1 \leq \alpha \leq dim(X)-(k+1)$ and  $k+1 \leq \beta \leq  dim(X)-1.$ Consider the following statements.
	\begin{enumerate}
		\item $X$ is $k-$rotund.
		\item $X/M$ is $k-$rotund, whenever $M$ is a proximinal subspace of $X.$
		\item $X/F$ is $k-$rotund, whenever $F$ is a subspace of $X$ with $dim(F)= \alpha.$
		\item $X/Y$ is $k-$rotund, whenever $Y$ is a proximinal subspace of $X$ with $codim(Y)= \beta.$
	\end{enumerate}
Then $(1) \Leftrightarrow (2) \Leftrightarrow (3) \Rightarrow (4).$ Further, if $X$ is reflexive, then all the statements are equivalent.
\end{theorem}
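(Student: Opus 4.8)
The plan is to separate the several trivial reductions from the single genuine construction, which will be reused twice. The implications requiring no work are the following. For $(2)\Rightarrow(1)$ I take $M=\{0\}$, a proximinal subspace with $X/M\cong X$. For $(2)\Rightarrow(3)$ I note that a subspace $F$ with $dim(F)=\alpha<\infty$ is finite dimensional, hence proximinal, so it is one of the subspaces in $(2)$. For $(2)\Rightarrow(4)$ (and hence $(1)\Rightarrow(4)$ once $(1)\Leftrightarrow(2)$ is in hand), a proximinal $Y$ with $codim(Y)=\beta$ is again a special instance of $(2)$. Thus the entire scheme reduces to proving $(1)\Rightarrow(2)$, $(3)\Rightarrow(1)$, and, under reflexivity, $(4)\Rightarrow(1)$.

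For $(1)\Rightarrow(2)$, let $M$ be proximinal and suppose $\xi_1,\dots,\xi_{k+1}\in S_{X/M}$ satisfy the extremal alignment $\frac{1}{k+1}\|\sum_{i=1}^{k+1}\xi_i\|=1$. Using proximinality I lift each $\xi_i$ to a minimal-norm representative $x_i\in S_X$, and then $\frac{1}{k+1}\|\sum_{i=1}^{k+1}x_i\|_X=1$ as well, since the quotient norm is dominated by the ambient norm while the triangle inequality caps it at $1$. Now $k$-rotundity of $X$ forces $V[(x_i)_{i=1}^{k+1}]=0$ in $X$. To transfer this to the quotient I use the isometry $(X/M)^*\cong M^\perp$ together with part $(4)$ of \Cref{volume property}: for every $h_1,\dots,h_k\in S_{(X/M)^*}$ the determinant $D_k[(\xi_i)_{i=1}^{k+1};(h_j)_{j=1}^{k}]$ equals $D_k[(x_i)_{i=1}^{k+1};(\hat h_j)_{j=1}^{k}]$ for the corresponding $\hat h_j\in S_{M^\perp}\subseteq S_{X^*}$. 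Taking suprema over this smaller family of functionals gives $V[(\xi_i)_{i=1}^{k+1}]_{X/M}\le V[(x_i)_{i=1}^{k+1}]_X=0$, so $X/M$ is $k$-rotund.

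The substance of the theorem lies in the two contrapositive constructions. Assume $X$ is not $k$-rotund, witnessed by $x_1,\dots,x_{k+1}\in S_X$ with $\frac{1}{k+1}\|\sum_{i=1}^{k+1}x_i\|=1$ and $V[(x_i)_{i=1}^{k+1}]>0$. Choose a norming functional $f\in S_{X^*}$ with $f(x_i)=1$ for all $i$, and set $W=\mathrm{span}\{x_i-x_{k+1}:1\le i\le k\}$; by part $(3)$ of \Cref{volume property}, $dim(W)=k$, and clearly $W\subseteq\ker f$. The idea is to quotient by a subspace $Z\subseteq\ker f$ meeting $W$ trivially: then $f$ descends to a norm-one functional on $X/Z$ equal to $1$ on each image $\xi_i=x_i+Z$, which forces $\|\xi_i\|=1$ and $\frac{1}{k+1}\|\sum_{i=1}^{k+1}\xi_i\|=1$, while $Z\cap W=\{0\}$ keeps $\{x_i-x_{k+1}\}$ independent modulo $Z$, so $V[(\xi_i)_{i=1}^{k+1}]_{X/Z}>0$ by part $(3)$ of \Cref{volume property}; hence $X/Z$ is not $k$-rotund. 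For $(3)\Rightarrow(1)$ I take $Z=F$ of dimension $\alpha$ inside a complement of $W$ in $\ker f$, which is possible precisely because $\alpha\le dim(\ker f)-dim(W)=dim(X)-(k+1)$, and such $F$ is automatically closed. For the reflexive case $(4)\Rightarrow(1)$ I instead build $Z=Y=\bigcap_{j=1}^{\beta}\ker g_j$ with $g_1=f$ and $g_2,\dots,g_{k+1}$ restricting to a basis of $W^*$ (ensuring $Y\cap W=\{0\}$), padded by further linearly independent functionals to reach codimension exactly $\beta$; here the constraint $k+1\le\beta\le dim(X)-1$ is exactly what makes the count work. This $Y$ is closed with $codim(Y)=\beta$, and reflexivity guarantees it is proximinal, so it is admissible for $(4)$, contradicting $(4)$.

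The main obstacle is reconciling the two competing demands on the quotient subspace $Z$: it must sit inside $\ker f$ so that the flat alignment $\frac{1}{k+1}\|\sum_{i=1}^{k+1}\xi_i\|=1$ is preserved, yet it must avoid $W$ so that the $k$-dimensional volume stays strictly positive, all while having the prescribed (co)dimension. This is exactly where the numerical hypotheses on $\alpha$ and $\beta$ are consumed, and it is the only point at which reflexivity is genuinely needed, namely to upgrade the closed, finite-codimensional subspace $Y$ to a proximinal one eligible for statement $(4)$.
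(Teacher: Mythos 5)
Your proposal is correct and follows essentially the same route as the paper: the same minimal-norm lifting plus the identification $(X/M)^*\cong M^{\perp}$ for $(1)\Rightarrow(2)$, and the same norming-functional construction with a subspace of $\ker f$ transversal to $\mathrm{span}\{x_i-x_{k+1}\}$ for $(3)\Rightarrow(1)$ and the reflexive case of $(4)\Rightarrow(1)$. The only differences are cosmetic (you argue the contrapositive directly via linear independence and use the descended functional where the paper invokes Ascoli's formula, and you make the choice of $Y$ explicit via kernels of functionals).
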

\begin{proof}
	$(1) \Rightarrow (2)$: Let $M$ be a proximinal subspace of $X.$ Let $x_1+M, x_2+M, \dots, x_{k+1}+M \in S_{X/M}$ with $ \Vert \sum_{i=1}^{k+1}(x_i+M) \Vert =k+1.$ Since $M$ is proximinal on $X,$ for every $1 \leq i \leq k+1$ there exists $y_i \in M$ such that $\|x_i-y_i\|= d(x_i,M)=1.$ Note that
	\[
	k+1=  \left \Vert \sum\limits_{i=1}^{k+1} (x_i+M) \right \Vert = d\left(\sum\limits_{i=1}^{k+1}x_i,M\right) \leq  \left \Vert \sum\limits_{i=1}^{k+1}x_i - \sum\limits_{i=1}^{k+1}y_i \right\Vert \leq k+1,
	\]
	which implies $\|\sum_{i=1}^{k+1}(x_i -y_i)\|= k+1.$ Therefore, by $(1),$ we have $V[(x_i-y_i)_{i=1}^{k+1}]=0.$ Using \Cref{remark vol}, it is easy to verify that $V[(x_i+M)_{i=1}^{k+1}] \leq V[(x_i-y_i)_{i=1}^{k+1}]$ and hence $V[(x_i+M)_{i=1}^{k+1}] = 0.$ Thus, $X/M$ is $k-$rotund.\\
	$(2) \Rightarrow (3)$: Obvious.\\
	$(3) \Rightarrow (1)$: Suppose there exist $x_1, x_2, \dots, x_{k+1} \in S_{X}$ with $\|\sum_{i=1}^{k+1}x_i\|=k+1$ such that $V[(x_i)_{i=1}^{k+1}]>0.$ By Hahn-Banach theorem, there exists  $f \in S_{X^*}$ such that $f(\sum_{i=1}^{k+1}x_i)= \|\sum_{i=1}^{k+1}x_i\|.$  Therefore, $f(x_i)=1$ for all $1 \leq i \leq k+1.$ Choose a subspace $F$ such that $F\subseteq ker(f),$ $F \cap span\{x_i- x_{k+1}: 1 \leq i \leq k\}=\{0\}$ and $dim(F)=\alpha.$ Hence, by Ascoli's formula, for all $1 \leq i \leq k+1,$ we have
	\[
	1= |f(x_i)|=d(x_i, ker(f)) \leq d(x_i,F) \leq \|x_i\|=1.
	\]
	Therefore, $\|x_i+F\|=1$ for all $1 \leq i \leq k+1.$ Similarly, we have $\|\sum_{i=1}^{k+1}(x_i+F)\|=k+1.$ By $(3),$ we get $V[(x_i+F)_{i=1}^{k+1}]=0.$ Thus, by \Cref{remark vol}, there exist $\lambda_1, \lambda_2, \dots ,\lambda_{k} \in \mathbb{R}$ such that $\lambda_k =1$ and $ \sum_{i=1}^{k} \lambda_i(x_i-x_{k+1}+F)=0+F.$ Observe that  $\sum_{i=1}^{k}\lambda_i(x_i-x_{k+1}) \in F.$ Therefore $\sum_{i=1}^{k}\lambda_i(x_i-x_{k+1}) =0,$ which  implies $V[(x_i)_{i=1}^{k+1}]=0.$ This is a contradiction.\\
	$(2) \Rightarrow (4)$: Obvious.
	
	 Let $X$ be a reflexive space. Suppose there exist $x_1, x_2, \dots, x_{k+1} \in S_{X}$ with $\|\sum_{i=1}^{k+1}x_i\|=k+1$ such that $V[(x_i)_{i=1}^{k+1}]>0.$ By Hahn-Banach theorem, there exists  $f \in S_{X^*}$ such that $f(\sum_{i=1}^{k+1}x_i)= \|\sum_{i=1}^{k+1}x_i\|.$ Therefore, $f(x_i)=1$ for all $1 \leq i \leq k+1.$ Choose a subspace $Y$ such that $Y\subseteq ker(f),$ $codim(Y)=\beta$ and  $Y \cap span\{x_i- x_{k+1}: 1 \leq i \leq k\}=\{0\}$.  Since $Y$ is proximinal on $X,$ by replacing $F$ by $Y$ in the proof of $(3) \Rightarrow (1)$ and repeating the argument involved in the proof, we get a contradiction. Hence the proof.
\end{proof}

As a consequence of \Cref{eg quotient k-rotund}, we  observe that the implication $(4) \Rightarrow (1)$ of \Cref{thrm quotiet k-rotund} need not be true in general, for any $k \in \mathbb{Z}^{+}.$
\begin{example}\label{eg quotient k-rotund}
	Let $k \in \mathbb{Z}^{+}$ and  $X= M \oplus_{1} (\mathbb{R}^{k}, \|\cdot\|_{1}),$ where $M$ is the Read's space \cite{Read2018}. Clearly, $X$ is not $k-$rotund. Let $Y$ be any subspace of $X$ with $codim(Y)=k+2.$ Since any finite co-dimensional subspace of $M$ with co-dimension greater than one is not proximinal on $M,$ by \cite[Corollary 4.2]{BLLN2008}, it follows that $Y$ is not proximinal on $X.$ Therefore, $X$ does not have any proximinal subspace of co-dimension $k+2.$
\end{example}

\section{Characterizations of $k-$WUR, $k-$WLUR and $k-$WMLUR}
In this section, we introduce and study two notions called $k-$weakly uniform rotundity and $k-$weakly locally uniform rotundity. We present a few characterizations of $k-$WUR, $k-$WLUR and $k-$WMLUR in terms of the notions discussed in Section $2$.
		
		\begin{definition} \label{def k-wur}
			Let $k \in \mathbb{Z^+}.$ A space $X$ is said to be
			\begin{enumerate}
				\item $k-$weakly uniformly rotund (in short, $k-$WUR), if for every $\epsilon >0$ and $f_1,f_2,\dots,f_k \in S_{X^*}$,
				\[
				\delta^{k}_{X}(\epsilon,(f_j)_{j=1}^{k}) \coloneqq  \inf\left\{ \{1\} \cup \left\{1-\dfrac{1}{k+1}\left \Vert \sum\limits_{i=1}^{k+1}x_i \right \Vert:
				\begin{array}{l}
					
					x_1, x_2, \dots,x_{k+1}\in S_X,\\
					| D_k[(x_i)_{i=1}^{k+1}; (f_j)_{j=1}^{k}]|\geq \epsilon
				\end{array}
				\right\}
				\right\} >0;
				\]
				\item $k-$weakly locally uniformly rotund (in short, $k-$WLUR) at $x \in S_X$, if for every
				$\epsilon >0$ and  $f_1,f_2,\dots,f_k \in S_{X^*}$,
				\[
				\delta^{k}_{X}(\epsilon, x, (f_j)_{j=1}^{k}) \coloneqq  \inf\left\{ \{1\} \cup \left\{1-\dfrac{1}{k+1}\left \Vert x+ \sum\limits_{i=1}^{k}x_i \right \Vert:
				\begin{array}{l}
					
					x_1, x_2, \dots,x_{k}\in S_X,\\
					| D_k[x, (x_i)_{i=1}^{k}; (f_j)_{j=1}^{k}]|\geq \epsilon
				\end{array}
				\right\}
				\right\} >0.
				\]
				We say $X$ is $k-$weakly locally uniformly rotund (in short, $k-$WLUR), if $X$ is $k-$WLUR at every $x \in S_X$.
			\end{enumerate}
		\end{definition}
		Clearly, the notion $1-$WUR (respectively, $1-$WLUR) coincide with the notion WUR (respectively, WLUR). The equivalent sequential formulation  of the notions $k-$WUR and $k-$WLUR given in the following results are useful to prove our results.
\begin{proposition}\label{equkwur}
	The  following statements are equivalent.
	\begin{enumerate}
		\item $X$ is $k-$WUR.
		\item If $(x_n^{(1)}),(x_n^{(2)}), \dots, (x_n^{(k+1)})$ are $(k+1)-$sequences in $S_X$ such that $\frac{1}{k+1}\|\sum_{i=1}^{k+1}x_n^{(i)}\| \to 1,$ then $D_k[(x_n^{(i)})_{i=1}^{k+1};(f_j)_{j=1}^{k}] \to 0$ for all  $f_1,f_2,\dots,f_k \in S_{X^*}.$
		\item If $(x_n^{(1)}),(x_n^{(2)}), \dots, (x_n^{(k+1)})$ are $(k+1)-$sequences in $B_X$ such that $\frac{1}{k+1}\|\sum_{i=1}^{k+1}x_n^{(i)}\| \to 1,$ then $D_k[(x_n^{(i)})_{i=1}^{k+1};(f_j)_{j=1}^{k}] \to 0$  for all $f_1,f_2,\dots,f_k \in S_{X^*}.$
		\item If $(x_n^{(1)}),(x_n^{(2)}), \dots, (x_n^{(k+1)})$ are $(k+1)-$sequences in $X$ such that $\|x_n^{(i)}\| \to 1$ for all $1 \leq i \leq k+1$ and $\frac{1}{k+1}\|\sum_{i=1}^{k+1}x_n^{(i)}\| \to 1,$ then $D_k[(x_n^{(i)})_{i=1}^{k+1};(f_j)_{j=1}^{k}] \to 0$  for all $f_1,f_2,\dots,f_k \in S_{X^*}.$
	\end{enumerate}
\end{proposition}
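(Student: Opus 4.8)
The plan is to prove the single equivalence $(1)\Leftrightarrow(2)$ together with the cyclic chain $(4)\Rightarrow(3)\Rightarrow(2)\Rightarrow(4)$, so that all four statements become equivalent. Only the last implication $(2)\Rightarrow(4)$ carries any real content; the rest are either the standard $\epsilon$--$\delta$/sequential translation or immediate from nesting of the hypothesis classes.

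First I would handle $(1)\Leftrightarrow(2)$ by contradiction in each direction. For $(1)\Rightarrow(2)$, given $(k+1)$ sequences in $S_X$ with $\frac{1}{k+1}\|\sum_{i=1}^{k+1}x_n^{(i)}\|\to 1$, suppose $D_k[(x_n^{(i)})_{i=1}^{k+1};(f_j)_{j=1}^{k}]\not\to 0$ for some fixed $f_1,\dots,f_k\in S_{X^*}$; passing to a subsequence gives $|D_k[(x_n^{(i)})_{i=1}^{k+1};(f_j)_{j=1}^{k}]|\ge\epsilon$ for some $\epsilon>0$, whence $k$-WUR forces $1-\frac{1}{k+1}\|\sum_{i=1}^{k+1}x_n^{(i)}\|\ge \delta^{k}_{X}(\epsilon,(f_j)_{j=1}^{k})>0$ along that subsequence, contradicting the hypothesis. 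For $(2)\Rightarrow(1)$, if $X$ fails to be $k$-WUR then some $\epsilon>0$ and $f_1,\dots,f_k$ give $\delta^{k}_{X}(\epsilon,(f_j)_{j=1}^{k})=0$; since the infimum in \Cref{def k-wur} includes $\{1\}$, this produces sequences in $S_X$ with $|D_k[(x_n^{(i)})_{i=1}^{k+1};(f_j)_{j=1}^{k}]|\ge\epsilon$ yet $1-\frac{1}{k+1}\|\sum_{i=1}^{k+1}x_n^{(i)}\|\to 0$, directly contradicting $(2)$.

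Next, the implications $(4)\Rightarrow(3)\Rightarrow(2)$ are immediate once I observe that the hypothesis classes are nested. Since $S_X$-sequences lie in $B_X$, statement $(3)$ applies to them, giving $(3)\Rightarrow(2)$. And if $(x_n^{(i)})_{i=1}^{k+1}$ lie in $B_X$ with $\frac{1}{k+1}\|\sum_{i=1}^{k+1}x_n^{(i)}\|\to 1$, then from $\|\sum_{i=1}^{k+1}x_n^{(i)}\|\le\sum_{i=1}^{k+1}\|x_n^{(i)}\|\le k+1$ and $\|\sum_{i=1}^{k+1}x_n^{(i)}\|\to k+1$ I deduce $\|x_n^{(i)}\|\to 1$ for each $i$, so such sequences fall under the hypothesis of $(4)$; this gives $(4)\Rightarrow(3)$.

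The main work, and the only step I expect to be nontrivial, is $(2)\Rightarrow(4)$. Given sequences $(x_n^{(i)})_{i=1}^{k+1}$ in $X$ with $\|x_n^{(i)}\|\to 1$ and $\frac{1}{k+1}\|\sum_{i=1}^{k+1}x_n^{(i)}\|\to 1$, I would normalize by setting $y_n^{(i)}=x_n^{(i)}/\|x_n^{(i)}\|\in S_X$ (well defined for large $n$). Since $\|y_n^{(i)}-x_n^{(i)}\|=\bigl|1-\|x_n^{(i)}\|\bigr|\to 0$, the triangle inequality yields $\frac{1}{k+1}\|\sum_{i=1}^{k+1}y_n^{(i)}\|\to 1$, so applying $(2)$ to $(y_n^{(i)})_{i=1}^{k+1}$ gives $D_k[(y_n^{(i)})_{i=1}^{k+1};(f_j)_{j=1}^{k}]\to 0$ for all $f_1,\dots,f_k\in S_{X^*}$. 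To transfer this back to $(x_n^{(i)})$, I would invoke \Cref{lem samir}$(1)$ with the bounded sequences $(x_n^{(i)})$ and scalars $c_n^{(i)}=1/\|x_n^{(i)}\|\to 1=:c_i$: since $c_n^{(i)}x_n^{(i)}=y_n^{(i)}$ and $c_ix_n^{(i)}=x_n^{(i)}$, the lemma delivers $|D_k[(y_n^{(i)})_{i=1}^{k+1};(f_j)_{j=1}^{k}]-D_k[(x_n^{(i)})_{i=1}^{k+1};(f_j)_{j=1}^{k}]|\to 0$, and combining the two limits gives $D_k[(x_n^{(i)})_{i=1}^{k+1};(f_j)_{j=1}^{k}]\to 0$. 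This closes the chain and proves the equivalence.
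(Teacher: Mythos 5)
Your proposal is correct and follows essentially the same route as the paper: the equivalence $(1)\Leftrightarrow(2)$ straight from the definition of $\delta^{k}_{X}(\epsilon,(f_j)_{j=1}^{k})$, the cycle $(2)\Rightarrow(4)\Rightarrow(3)\Rightarrow(2)$, and in particular the same key step for $(2)\Rightarrow(4)$ --- normalizing to $x_n^{(i)}/\|x_n^{(i)}\|$, applying $(2)$, and transferring back via Lemma~\ref{lem samir}$(1)$ with $c_n^{(i)}=1/\|x_n^{(i)}\|\to 1$. Your deduction of $\|x_n^{(i)}\|\to 1$ in $(4)\Rightarrow(3)$ is the same estimate the paper uses, just phrased via the sum of norms rather than isolating one coordinate.
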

\begin{proof} $(1)\Leftrightarrow(2)$: These implications follow from the \Cref{def k-wur}. \\
	$(2) \Rightarrow (4)$: Let  $(x_n^{(1)}),(x_n^{(2)}), \dots, (x_n^{(k+1)})$ be $(k+1)-$sequences in $X$ with $\|x_n^{(i)}\| \to 1$ for all $1 \leq i \leq k+1$ and  $\frac{1}{k+1}\| \sum_{i=1}^{k+1}x_n^{(i)} \| \to 1.$ Let $f_1,f_2,\dots,f_k \in S_{X^*}.$ For all $n \in \mathbb{N}$ and $1 \leq i \leq k+1,$ define $\overline{x}_n^{(i)}= \frac {x_n^{(i)}}{\|x_n^{(i)}\|}.$ Since
	\[
1  \geq  \frac{1}{k+1}\left \Vert \sum\limits_{i=1}^{k+1}\overline{x}_n^{(i)} \right \Vert \geq  \frac{1}{k+1}\left \Vert  \sum\limits_{i=1}^{k+1}x_n^{(i)} \right \Vert-  \frac{1}{k+1}\sum\limits_{i=1}^{k+1} \left \Vert\overline{x}_n^{(i)}-x_n^{(i)} \right \Vert,		
	\]
it follows that  $ \frac{1}{k+1}\|\sum_{i=1}^{k+1}\overline{x}_n^{(i)}\| \to 1.$ Thus, by assumption, $D_k[(\overline{x}_n^{(i)})_{i=1}^{k+1};(f_j)_{j=1}^{k}] \to 0.$ Further, using \Cref{lem samir}, we have $D_k[(x_n^{(i)})_{i=1}^{k+1};(f_j)_{j=1}^{k}] \to 0.$\\
	$(4) \Rightarrow (3)$: Let  $(x_n^{(1)}),(x_n^{(2)}), \dots, (x_n^{(k+1)})$ be $(k+1)-$sequences in $B_X$ such that  $\frac{1}{k+1}\|\sum_{i=1}^{k+1} x_n^{(i)}\| \to 1.$ Note that for any $1 \leq i \leq k+1,$ we have
	\[
	\frac{1}{k+1}\left \Vert \sum\limits_{i=1}^{k+1}x_n^{(i)} \right \Vert \leq \frac{\|x_n^{(i)}\|}{k+1}+ \frac{k}{k+1} \leq 1
	\]
	and hence $\|x_n^{(i)}\| \to 1.$ Thus, by $(4),$ $D_k[(x_n^{(i)})_{i=1}^{k+1};(f_j)_{j=1}^{k}] \to 0$ for all $f_1,f_2,\dots, f_k \in S_{X^*}.$ \\
	$(3) \Rightarrow (2)$: Obvious.
\end{proof}
The proof of the following corollary is similar to the proof of \Cref{equkwur}.
\begin{corollary}
	Let $x \in S_X$. Then the  following statements are equivalent.
	\begin{enumerate}
		\item $X$ is $k-$WLUR at $x$.
		\item If $(x_n^{(1)}),(x_n^{(2)}), \dots, (x_n^{(k)})$ are $(k)-$sequences in $S_X$ such that $\frac{1}{k+1}\|x+\sum_{i=1}^{k}x_n^{(i)}\| \to 1,$ then $D_k[x, (x_n^{(i)})_{i=1}^{k};(f_j)_{j=1}^{k}] \to 0$ for all  $f_1,f_2,\dots,f_k \in S_{X^*}.$
		\item If $(x_n^{(1)}),(x_n^{(2)}), \dots, (x_n^{(k)})$ are $(k)-$sequences in $B_X$ such that $\frac{1}{k+1}\|x+\sum_{i=1}^{k}x_n^{(i)}\| \to 1,$ then $D_k[x, (x_n^{(i)})_{i=1}^{k};(f_j)_{j=1}^{k}] \to 0$  for all $f_1,f_2,\dots,f_k \in S_{X^*}.$
		\item If $(x_n^{(1)}),(x_n^{(2)}), \dots, (x_n^{(k)})$ are $(k)-$sequences in $X$ such that $\|x_n^{(i)}\| \to 1$ for all $1 \leq i \leq k$ and $\frac{1}{k+1}\|x+\sum_{i=1}^{k}x_n^{(i)}\| \to 1,$ then $D_k[x, (x_n^{(i)})_{i=1}^{k};(f_j)_{j=1}^{k}] \to 0$  for all $f_1,f_2,\dots,f_k \in S_{X^*}.$
	\end{enumerate}
\end{corollary}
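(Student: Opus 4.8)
The plan is to reproduce, with the single vector $x$ held fixed in the first of the $k+1$ slots throughout, the four-step cycle $(1)\Leftrightarrow(2)$, $(2)\Rightarrow(4)$, $(4)\Rightarrow(3)$, $(3)\Rightarrow(2)$ that was used in the proof of \Cref{equkwur}. For $(1)\Leftrightarrow(2)$ I would simply unwind the infimum defining $\delta^{k}_{X}(\epsilon,x,(f_j)_{j=1}^{k})$ in \Cref{def k-wur}(2). Because the set in that infimum always contains $\{1\}$, its value is at most $1$, and $\delta^{k}_{X}(\epsilon,x,(f_j)_{j=1}^{k})=0$ precisely when there exist $x_n^{(1)},\dots,x_n^{(k)}\in S_X$ with $\frac{1}{k+1}\|x+\sum_{i=1}^{k}x_n^{(i)}\|\to 1$ and $|D_k[x,(x_n^{(i)})_{i=1}^{k};(f_j)_{j=1}^{k}]|\ge\epsilon$ for all $n$; this is exactly the negation of (2), so the two statements are contrapositives of one another.

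For $(2)\Rightarrow(4)$, given sequences $(x_n^{(i)})_{i=1}^{k}$ in $X$ with $\|x_n^{(i)}\|\to 1$ and $\frac{1}{k+1}\|x+\sum_{i=1}^{k}x_n^{(i)}\|\to 1$, I would normalize by setting $\overline{x}_n^{(i)}=x_n^{(i)}/\|x_n^{(i)}\|\in S_X$ and use the triangle-inequality estimate
\[
\frac{1}{k+1}\left\|x+\sum_{i=1}^{k}\overline{x}_n^{(i)}\right\|\ge \frac{1}{k+1}\left\|x+\sum_{i=1}^{k}x_n^{(i)}\right\|-\frac{1}{k+1}\sum_{i=1}^{k}\left\|\overline{x}_n^{(i)}-x_n^{(i)}\right\|,
\]
together with $\|\overline{x}_n^{(i)}-x_n^{(i)}\|=|1-\|x_n^{(i)}\||\to 0$, exactly as in \Cref{equkwur}, so that $\frac{1}{k+1}\|x+\sum_{i=1}^{k}\overline{x}_n^{(i)}\|\to 1$. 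Hypothesis (2) then gives $D_k[x,(\overline{x}_n^{(i)})_{i=1}^{k};(f_j)_{j=1}^{k}]\to 0$, and I would pass back to the unnormalized vectors via \Cref{lem samir}(1), applied to the bounded base sequences $x,\overline{x}_n^{(1)},\dots,\overline{x}_n^{(k)}$ with scalar sequences $c_n^{(1)}\equiv 1$ and $c_n^{(i+1)}=\|x_n^{(i)}\|\to 1$; this yields $|D_k[x,(x_n^{(i)})_{i=1}^{k};(f_j)_{j=1}^{k}]-D_k[x,(\overline{x}_n^{(i)})_{i=1}^{k};(f_j)_{j=1}^{k}]|\to 0$, whence the conclusion.

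The step $(4)\Rightarrow(3)$ rests on the observation that for sequences in $B_X$ one has, for each fixed $i$,
\[
\frac{1}{k+1}\left\|x+\sum_{j=1}^{k}x_n^{(j)}\right\|\le \frac{\|x_n^{(i)}\|+k}{k+1}\le 1,
\]
using $\|x\|=1$ and $\|x_n^{(j)}\|\le 1$; since the left-hand side tends to $1$, each $\|x_n^{(i)}\|\to 1$, and (4) applies. Finally $(3)\Rightarrow(2)$ is immediate from $S_X\subseteq B_X$. I expect no genuine obstacle, since the whole argument runs parallel to \Cref{equkwur}; the only point demanding care is the bookkeeping in \Cref{lem samir}(1), namely keeping the fixed coordinate $x$ as a constant sequence with multiplier identically $1$ while the remaining $k$ multipliers are the norms $\|x_n^{(i)}\|$, so that the normalization can be stripped off without perturbing the first slot of the determinant.
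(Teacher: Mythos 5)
Your proposal is correct and takes essentially the same approach as the paper, which gives no separate argument but states that the corollary follows along the lines of the proof of \Cref{equkwur}; your adaptation reproduces exactly that cycle $(1)\Leftrightarrow(2)$, $(2)\Rightarrow(4)$, $(4)\Rightarrow(3)$, $(3)\Rightarrow(2)$ with $x$ frozen in the first slot. Your bookkeeping in \Cref{lem samir}(1) --- constant multiplier $1$ on the fixed coordinate, multipliers $\|x_n^{(i)}\|\to 1$ (eventually nonzero, so the normalization is well defined) on the rest --- is precisely the point where the local version differs from the global one, and you have handled it correctly.
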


 It is easy to verify that the observations given in the following remark hold.
\begin{remark}\label{rem KWUR}\
	\begin{enumerate}
		\item 	From the definitions, it follows that $k-$UR  $\Rightarrow$ $k-$WUR $\Rightarrow$ $k-$WLUR  $\Rightarrow$ $k-$rotund. Further, $k-$LUR $\Rightarrow$ $k-$WLUR.
		\item In general, none of the  implications given in $(1)$ can be reversed  (see, \Cref{eg kWUR,eg kWUR not KMLUR}). However, if the space is finite dimensional, then all the notions in $(1)$ coincide.
		\item There is no relation between the notion $k-$WUR and any of the  notions $k-$LUR, $k-$MLUR, $k-$strongly rotund (see, \Cref{eg kWUR,eg kWUR not KMLUR}). Also, there is no relation between the notion $k-$WLUR and any of the notions $k-$MLUR, $k-$strongly rotund  (see, \Cref{eg kWUR not KMLUR,eg kWUR}).
	\end{enumerate}
\end{remark}

 The following result is an outcome of \Cref{prop det prop}, wherein we show that if a space is WUR (respectively, WLUR), then it is $k-$WUR (respectively, $k-$WLUR) for any $k \in \mathbb{Z}^+.$
\begin{proposition}\label{prop k implies k+1}
Let $x \in S_X.$ Then the following statements hold.
	\begin{enumerate}
		\item If $X$ is $k-$WUR, then $X$ is $(k+1)-$WUR.
		\item If $X$ is $k-$WLUR at $x,$ then $X$ is $(k+1)-$WLUR at $x.$
	\end{enumerate}
\end{proposition}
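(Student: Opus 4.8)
The plan is to derive both parts from the sequential reformulations of $k-$WUR (\Cref{equkwur}) and of $k-$WLUR (the corollary following \Cref{equkwur}), using \Cref{prop det prop} as the bridge that promotes control of $k-$dimensional determinants on every $(k+1)-$element subconfiguration to control of the full $(k+1)-$dimensional determinant. The single elementary ingredient I will need repeatedly is that near-extremality of an average of unit vectors is inherited after deleting one vector: if $x_n^{(1)},\dots,x_n^{(k+2)}\in S_X$ satisfy $\frac{1}{k+2}\|\sum_{i=1}^{k+2}x_n^{(i)}\|\to 1$, then for any index $\ell$,
\[
\left\| \sum_{i\ne \ell} x_n^{(i)} \right\| \ge \left\| \sum_{i=1}^{k+2} x_n^{(i)} \right\| - \|x_n^{(\ell)}\| \ge \left\| \sum_{i=1}^{k+2} x_n^{(i)} \right\| - 1,
\]
whence $\frac{1}{k+1}\|\sum_{i\ne\ell}x_n^{(i)}\|\to 1$ (the bound $\le 1$ being immediate from the triangle inequality).

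For $(1)$ I would take $(k+2)-$sequences $(x_n^{(1)}),\dots,(x_n^{(k+2)})$ in $S_X$ with $\frac{1}{k+2}\|\sum_{i=1}^{k+2}x_n^{(i)}\|\to 1$ and fix $f_1,\dots,f_{k+1}\in S_{X^*}$; by \Cref{equkwur} it suffices to prove $D_{k+1}[(x_n^{(i)})_{i=1}^{k+2};(f_j)_{j=1}^{k+1}]\to 0$, which by \Cref{prop det prop} reduces to showing $D_k[(x_n^{(\alpha_i)})_{i=1}^{k+1};(f_{\beta_j})_{j=1}^{k}]\to 0$ for every $\alpha\in\mathcal{S}_{k+2}(k+1)$ and $\beta\in\mathcal{S}_{k+1}(k)$. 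By the deletion estimate above, each subconfiguration $(x_n^{(\alpha_i)})_{i=1}^{k+1}$ lies in $S_X$ and has average norm tending to $1$, so the $k-$WUR hypothesis (via \Cref{equkwur}) gives $D_k[(x_n^{(\alpha_i)})_{i=1}^{k+1};(g_j)_{j=1}^{k}]\to 0$ for \emph{all} $g_1,\dots,g_k\in S_{X^*}$, in particular for $g_j=f_{\beta_j}$. As $\alpha,\beta$ were arbitrary, \Cref{prop det prop} yields $D_{k+1}\to 0$, so $X$ is $(k+1)-$WUR.

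For $(2)$ I would run the same scheme on the $(k+2)-$sequences $x,x_n^{(1)},\dots,x_n^{(k+1)}$, with $x$ the constant sequence and $\frac{1}{k+2}\|x+\sum_{i=1}^{k+1}x_n^{(i)}\|\to 1$. Every $(k+1)-$subconfiguration that retains $x$ omits one $x_n^{(\ell)}$, and the deletion estimate gives $\frac{1}{k+1}\|x+\sum_{i\ne\ell}x_n^{(i)}\|\to 1$, so $k-$WLUR at $x$ forces $D_k[x,(x_n^{(i)})_{i\ne\ell};(g_j)_{j=1}^{k}]\to 0$ for all $g_1,\dots,g_k\in S_{X^*}$. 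The genuine obstacle is the unique subconfiguration that omits $x$, namely $(x_n^{(i)})_{i=1}^{k+1}$, to which $k-$WLUR does not apply. I would resolve this by noting that the $k+2$ coordinate columns $v_0=(1,(g_j(x))_{j=1}^{k})^{T}$ and $v_i=(1,(g_j(x_n^{(i)}))_{j=1}^{k})^{T}$ for $1\le i\le k+1$ are $k+2$ vectors in $\mathbb{R}^{k+1}$, hence linearly dependent, with the standard cofactor (generalised cross-product) relation $\sum_{j=0}^{k+1}(-1)^j\det(\widehat{v_j})\,v_j=0$. Reading off the first coordinate (all entries equal to $1$) and matching determinants with configurations gives, up to signs,
\[
D_k[(x_n^{(i)})_{i=1}^{k+1};(g_j)_{j=1}^{k}] = -\sum_{\ell=1}^{k+1}(-1)^{\ell}\,D_k[x,(x_n^{(i)})_{i\ne\ell};(g_j)_{j=1}^{k}],
\]
where every right-hand summand is an include-$x$ determinant already shown to tend to $0$; hence the left-hand side does too. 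With all $(k+1)-$subconfigurations controlled, \Cref{prop det prop} yields $D_{k+1}[x,(x_n^{(i)})_{i=1}^{k+1};(f_j)_{j=1}^{k+1}]\to 0$, so $X$ is $(k+1)-$WLUR at $x$.

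The main difficulty, and the only place where $(2)$ is more than a transcription of $(1)$, is precisely this omit-$x$ subconfiguration: since $k-$WLUR says nothing about a configuration of $k+1$ moving unit vectors, the linear-dependence identity among the coordinate columns is exactly what recovers that determinant from the configurations that do contain $x$; signs are irrelevant throughout since only convergence to $0$ is at stake.
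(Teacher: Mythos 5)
Your proposal is correct and follows essentially the same route as the paper: both parts use the deletion estimate to make every $(k+1)$-subconfiguration near-extremal and then invoke \Cref{prop det prop} to pass to the $(k+1)$-dimensional determinant, with the omit-$x$ configuration in $(2)$ recovered as an alternating sum of the include-$x$ determinants. The only difference is cosmetic: where you derive that expansion from scratch via the linear-dependence (cofactor) identity for $k+2$ vectors in $\mathbb{R}^{k+1}$ — equivalently, expanding a $(k+2)\times(k+2)$ determinant with two identical rows of ones — the paper simply cites \cite[Lemma 2]{Suya2000} for the resulting inequality.
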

\begin{proof} $(1)$: Let $(x_n^{(1)}), (x_n^{(2)}), \dots, (x_n^{(k+2)})$ be $(k+2)-$sequences in $S_X$ such that $\|\sum_{i=1}^{k+2}x_n^{(i)}\| \to k+2$ and $f_1, f_2, \dots, f_{k+1} \in S_{X^*}.$ Since for any $1 \leq j \leq k+2,$ we have
\[
	\left\Vert\sum\limits_{i=1}^{k+2}x_n^{(i)}\right\Vert- 1=\left\Vert \sum\limits_{i=1}^{k+2}x_n^{(i)}\right\Vert- \left\Vert x_n^{(j)}\right\Vert \leq \left\Vert \sum\limits_{i=1, i \neq j}^{k+2}x_n^{(i)}\right\Vert \leq k+1,
\]
	which implies   $\|\sum_{i=1, i \neq j}^{k+2}x_n^{(i)}\| \to k+1.$  By assumption, we have $D_{k}[(x_n^{(\alpha_i)})_{i=1}^{k+1}; (f_{\beta_j})_{j=1}^{k}] \to 0$ for all $\alpha \in \mathcal{S}_{k+2}(k+1)$ and $\beta \in \mathcal{S}_{k+1}(k).$  Therefore, by \Cref{prop det prop},  $D_{k+1}[(x_n^{(i)})_{i=1}^{k+2};(f_j)_{j=1}^{k+1}] \to 0.$ Thus, $X$ is $(k+1)-$WUR.\\
	$(2)$: Let $x \in S_X,$ $(x_n^{(1)}), (x_n^{(2)}), \dots, (x_n^{(k+1)})$ be $(k+1)-$sequences in $S_X$ with $\|x+\sum_{i=1}^{k+1}x_n^{(i)}\| \to k+2$ and  $f_1, f_2, \dots, f_{k+1} \in S_{X^*}.$ Note that, $\|x+\sum_{i=1}^{k}x_n^{(\alpha_i)}\| \to k+1$ for all $\alpha \in \mathcal{S}_{k+1}(k).$ Since $X$ is $k-$WLUR at $x,$ it follows that  $|D_{k}[x,(x_n^{(\alpha_i)})_{i=1}^{k}; (f_{\beta_j})_{j=1}^{k}]| \to 0$ for all $\alpha, \beta \in \mathcal{S}_{k+1}(k).$  Now, as a result of \cite[Lemma 2]{Suya2000}, for any $\beta \in \mathcal{S}_{k+1}(k),$ we have
	\[
|D_{k}[(x_n^{(i)})_{i=1}^{k+1}; (f_{\beta_j})_{j=1}^{k}]| \leq  \sum_{\alpha \in \mathcal{S}_{k+1}(k)}|D_{k}[x, (x_n^{(\alpha_i)})_{i=1}^{k}; (f_{\beta_j})_{j=1}^{k}]|,
	\]
	 which  implies $ |D_{k}[(x_n^{(i)})_{i=1}^{k+1}; (f_{\beta_j})_{j=1}^{k}]| \to 0.$  Thus, by \Cref{prop det prop},
 $D_{k+1}[x,(x_n^{(i)})_{i=1}^{k+1}; (f_j)_{j=1}^{k+1}] \to 0.$ Hence, $X$ is $(k+1)-$WLUR at $x.$
\end{proof}

  The subsequent example shows that the converses of the statements of \Cref{prop k implies k+1}, need not be true for any $k \in \mathbb{Z}^{+}.$ Further, we will see in \Cref{eg k+1 WUR} that there exists a strongly rotund space which is $(k+1)-$WUR, but not $k-$WLUR.

\begin{example}\label{eg k not k-1 WUR}
 Let $k \in \mathbb{Z}^{+},$ $k \geq 2$ and  $i_1 < i_2 < \dots< i_k.$ For each $x=(x_1, x_2, \dots)$ in $l_2,$ define
		\[
		\|x\|_{i_1,i_2, \dots, i_k}^{2}= \left(\sum\limits_{j=1}^{k}\vert x_{i_j} \vert \right)^2+ \sum\limits_{i \neq i_1, i_2, \dots, i_k} x_i^2.
		\]
		Let $X= (l_2, \| \cdot\|_{i_1, i_2, \dots, i_k}).$ In \cite[Example 2]{LiYu1985}, it is proved that $X$ is $k-$UR, but not $(k-1)-$rotund. Thus, $X$ is $k-$WUR, but not $(k-1)-$WLUR.
\end{example}
As noted in \Cref{rem KWUR}, now we  provide an example.
\begin{example}\label{eg kWUR not KMLUR}
	Consider the space	$X= (\ell_2,\| \cdot \|_W)$ from $\textnormal{\cite[Example 2]{Smit1978a}}$ and $k \in \mathbb{Z}^{+}.$ In \cite{Smit1978a}, it is proved that $X$ is WUR, but not MLUR and it does not have the Kadets-Klee property (see, \cite[Definition 2.5.26]{Megg1998}). From \cite[Theorems 5.1.18 and 5.3.28]{Megg1998}, it follows that  $B_X$ is Chebyshev on $X,$ but not approximatively compact on $X.$ Therefore, by \cite[Lemma 2.8]{VeRS2021}, $B_X$ is not $k-$SCh on $X.$ Thus, by \cite[Theorem 2.6]{LiZZ2018}, $X$ is not $k-$MLUR. Observe that $X$ is  not $k-$strongly rotund. However, by \Cref{prop k implies k+1}, $X$ is $k-$WUR.
\end{example}
 Now, we present some sequential characterizations of $k-$WUR in terms of an uniform version of  $k-$WMLUR.

  \begin{definition}\textnormal{\cite{XiLi2004}}\label{def KWLUR}
 	Let $k \in \mathbb{Z^+}.$ A space $X$ is said to be $k-$WMLUR, if for any $(k+1)-$sequences $(x_n^{(1)}),(x_n^{(2)}),$ $ \dots, (x_n^{(k+1)})$ in $S_X$ and $x \in S_X$ with $\|(k+1)x-\sum_{i=1}^{k+1}x_n^{(i)}\| \to 0,$ it follows that $D_k[(x_n^{(i)})_{i=1}^{k+1};(f_j)_{j=1}^{k}] \to 0$ for all  $f_1,f_2,\dots,f_k \in S_{X^*}.$
 \end{definition}
It is easy to verify from the definitions that $k-$WLUR $\Rightarrow$ $k-$WMLUR $\Rightarrow$ $k-$rotund. However, none of the implications can be reversed in general (see, \Cref{eg kWMLUR not kR,eg kWUR}).
\begin{theorem}\label{thrm WUR MLUR}
	The following statements are equivalent.
	\begin{enumerate}
		\item $X$ is $k-$WUR.
		\item If $(x_n^{(1)}), (x_n^{(2)}), \dots, (x_n^{(k+2)})$ are $(k+2)-$sequences in $S_X$ such that $\|(k+1)x_n^{(k+2)}-\sum_{i=1}^{k+1}x_n^{(i)}\| \to 0,$ then $D_k[(x_n^{(\alpha_i)})_{i=1}^{k+1}; (f_j)_{j=1}^{k}] \to 0$ for all $f_1, f_2, \dots, f_k \in S_{X^*}$ and $\alpha \in \mathcal{S}_{k+2}(k+1).$
		\item If $(x_n^{(1)}), (x_n^{(2)}), \dots, (x_n^{(k+2)})$ are $(k+2)-$sequences in $S_X$ such that $\|(k+1)x_n^{(k+2)}-\sum_{i=1}^{k+1}x_n^{(i)}\| \to 0,$ then $D_k[(x_n^{(\alpha_i)})_{i=1}^{k+1}; (f_j)_{j=1}^{k}] \to 0$ for all $f_1, f_2, \dots, f_k \in S_{X^*}$ and  for some  $\alpha \in \mathcal{S}_{k+2}(k+1).$
	\end{enumerate}
\end{theorem}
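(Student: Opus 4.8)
The plan is to prove the cycle $(1)\Rightarrow(2)\Rightarrow(3)\Rightarrow(1)$. The implication $(2)\Rightarrow(3)$ is immediate, since a statement holding for every $\alpha\in\mathcal{S}_{k+2}(k+1)$ holds in particular for some $\alpha$. Both remaining implications rest on one algebraic reduction, which I isolate first. Suppose $(x_n^{(1)}),\dots,(x_n^{(k+2)})$ lie in $S_X$ with $\big\|(k+1)x_n^{(k+2)}-\sum_{i=1}^{k+1}x_n^{(i)}\big\|\to0$, and write $w_n=\frac{1}{k+1}\sum_{i=1}^{k+1}x_n^{(i)}$. Then $\|x_n^{(k+2)}-w_n\|\to0$, so $x_n^{(k+2)}-w_n\xrightarrow{w}0$ and, by \Cref{lem samir}$(2)$, inside any $D_k$ the vector $x_n^{(k+2)}$ may be replaced by $w_n$ without changing the limit. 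I claim that for every $\alpha\in\mathcal{S}_{k+2}(k+1)$ and every $f_1,\dots,f_k\in S_{X^*}$,
\[
D_k[(x_n^{(\alpha_i)})_{i=1}^{k+1};(f_j)_{j=1}^{k}]\to0 \quad\Longleftrightarrow\quad D_k[(x_n^{(i)})_{i=1}^{k+1};(f_j)_{j=1}^{k}]\to0 .
\]
If $\alpha=\{1,\dots,k+1\}$ this is a tautology. Otherwise $\alpha$ omits some $m\le k+1$ and contains $k+2$; after replacing $x_n^{(k+2)}$ by $w_n$, I expand the determinant along the column carrying $w_n$. Since $D_k$ is multilinear in its columns and the coefficients $\frac{1}{k+1}$ sum to one (so the leading row of $1$'s is preserved), that column expands as $\frac{1}{k+1}\sum_{i=1}^{k+1}D_k[\dots,x_n^{(i)}]$; every summand with $i\ne m$ has a repeated vector and vanishes, leaving exactly $\frac{\pm1}{k+1}D_k[(x_n^{(i)})_{i=1}^{k+1};(f_j)_{j=1}^{k}]$ (the sign from reordering columns). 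The claimed equivalence follows.

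Granting this, $(1)\Rightarrow(2)$ is short. Assume $X$ is $k$-WUR and the hypothesis of $(2)$. Taking norms gives $\frac{1}{k+1}\|\sum_{i=1}^{k+1}x_n^{(i)}\|\to1$, so \Cref{equkwur} yields $D_k[(x_n^{(i)})_{i=1}^{k+1};(f_j)_{j=1}^{k}]\to0$ for all $f_j\in S_{X^*}$. By the equivalence above, $D_k[(x_n^{(\alpha_i)})_{i=1}^{k+1};(f_j)_{j=1}^{k}]\to0$ for every $\alpha\in\mathcal{S}_{k+2}(k+1)$, which is exactly $(2)$.

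For $(3)\Rightarrow(1)$ I verify the sequential form of $k$-WUR from \Cref{equkwur}. Let $(x_n^{(i)})_{i=1}^{k+1}$ be sequences in $S_X$ with $\frac{1}{k+1}\|\sum_{i=1}^{k+1}x_n^{(i)}\|\to1$ and fix $f_1,\dots,f_k\in S_{X^*}$. I set $x_n^{(k+2)}=\big(\sum_{i=1}^{k+1}x_n^{(i)}\big)\big/\big\|\sum_{i=1}^{k+1}x_n^{(i)}\big\|\in S_X$ and note that $\big\|(k+1)x_n^{(k+2)}-\sum_{i=1}^{k+1}x_n^{(i)}\big\|=\big|\,k+1-\|\sum_{i=1}^{k+1}x_n^{(i)}\|\,\big|\to0$, so the hypothesis of $(3)$ holds for the augmented family. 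Thus some $\alpha\in\mathcal{S}_{k+2}(k+1)$ gives $D_k[(x_n^{(\alpha_i)})_{i=1}^{k+1};(f_j)_{j=1}^{k}]\to0$, and the equivalence from the first paragraph (applied with this $\alpha$) forces $D_k[(x_n^{(i)})_{i=1}^{k+1};(f_j)_{j=1}^{k}]\to0$. Since $f_1,\dots,f_k$ were arbitrary, $X$ is $k$-WUR.

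I expect the only delicate point to be the reduction in the first paragraph: justifying that $x_n^{(k+2)}$ may be swapped for $w_n$ (which is where boundedness and \Cref{lem samir}$(2)$ are needed) and checking that the column expansion collapses to a single nonzero term with the correct sign. Once that identity is in place, both nontrivial implications are immediate, and the norm estimates together with $(2)\Rightarrow(3)$ are routine.
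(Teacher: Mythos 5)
Your proof is correct and takes essentially the same route as the paper's: the same replacement of $x_n^{(k+2)}$ by the average $w_n=\frac{1}{k+1}\sum_{i=1}^{k+1}x_n^{(i)}$ justified by \Cref{lem samir}, the same multilinearity identity collapsing the $w_n$-column to $\pm\frac{1}{k+1}D_k[(x_n^{(i)})_{i=1}^{k+1};(f_j)_{j=1}^{k}]$, and the same normalized-sum construction of $x_n^{(k+2)}$ for $(3)\Rightarrow(1)$. The only cosmetic difference is that you isolate the two uses of this identity as a single equivalence up front, whereas the paper carries out the same computation separately inside $(1)\Rightarrow(2)$ and $(3)\Rightarrow(1)$.
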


\begin{proof}
	$(1) \Rightarrow (2)$: Let $(x_n^{(1)}), (x_n^{(2)}), \dots, (x_n^{(k+2)})$ be $(k+2)-$sequences in $S_X$ with $\|(k+1)x_n^{(k+2)}-\sum_{i=1}^{k+1}x_n^{(i)}\| \to 0$ and $f_1, f_2, \dots, f_{k} \in S_{X^*}.$ Observe that $\| \sum_{i=1}^{k+1}x_n^{(i)}\| \to k+1.$ Thus, by $(1),$ we get $D_k[(x_n^{(i)})_{i=1}^{k+1}; (f_j)_{j=1}^{k}] \to 0.$ Now, it is enough to show that $D_k[x_n^{(k+2)},(x_n^{(\beta_i)})_{i=1}^{k}; (f_j)_{j=1}^{k}] \to 0$  for all $\beta \in \mathcal{S}_{k+1}(k).$  For every $n \in \mathbb{N}$, consider $y_n=\frac{1}{k+1}\sum_{i=1}^{k+1}x_n^{(i)}$ and let $\beta \in \mathcal{S}_{k+1}(k)$.  Note that for any $n \in \mathbb{N},$ we have
	 $
	 |D_k[(x_n^{(i)})_{i=1}^{k+1}; (f_j)_{j=1}^{k}]|= (k+1) |D_k[y_n, (x_n^{(\beta_i)})_{i=1}^{k}; (f_j)_{j=1}^{k}]|,
	 $
	 which  implies  $D_k[y_n, (x_n^{(\beta_i)})_{i=1}^{k}; (f_j)_{j=1}^{k}] \to 0.$ Since $y_n- x_n^{(k+2)} \to 0,$ it follows from \Cref{lem samir} that
	 	 \[
|D_k[y_n-x_n^{(k+2)}+x_n^{(k+2)},(x_n^{(\beta_i)})_{i=1}^{k}; (f_j)_{j=1}^{k}] -D_k[x_n^{(k+2)},(x_n^{(\beta_i)})_{i=1}^{k}; (f_j)_{j=1}^{k}]| \to 0.
\]
Therefore, $D_k[x_n^{(k+2)},(x_n^{(\beta_i)})_{i=1}^{k}; (f_j)_{j=1}^{k}] \to 0.$ \\
$(2) \Rightarrow (3)$: Obvious.\\
$(3) \Rightarrow (1)$: Let $(x_n^{(1)}), (x_n^{(2)}), \dots, (x_n^{(k+1)})$ be $(k+1)-$sequences in $S_X$ such that $\|\sum_{i=1}^{k+1}x_n^{(i)}\| \to k+1$ and $f_1, f_2, \dots, f_{k}$ in $ S_{X^*}.$ For each $n \in \mathbb{N},$ define $x_n^{(k+2)} = \frac{\sum_{i=1}^{k+1}x_n^{(i)}}{\|\sum_{i=1}^{k+1}x_n^{(i)}\|}.$ Since $\| (k+1)x_n^{(k+2)} - \sum_{i=1}^{k+1}x_n^{(i)} \| \to 0$, by $(3),$ $D_k[(x_n^{(\alpha_i)})_{i=1}^{k+1}; (f_j)_{j=1}^{k}] \to 0$ for some $\alpha \in \mathcal{S}_{k+2}(k+1).$  If $\alpha=\{1,2, \dots, k+1\},$ then it is done. Assume $D_k[x_n^{(k+2)},(x_n^{(\beta_i)})_{i=1}^{k}; (f_j)_{j=1}^{k}] \to 0$ for some $\beta \in \mathcal{S}_{k+1}(k).$ Then, using \Cref{lem samir}, we have $D_k\left[\frac{1}{k+1}\sum_{i=1}^{k+1}x_n^{(i)},(x_n^{(\beta_i)})_{i=1}^{k}; (f_j)_{j=1}^{k}\right] \to 0.$ Thus, $D_k[(x_n^{(i)})_{i=1}^{k+1}; (f_j)_{j=1}^{k}] \to 0.$ Hence, $X$ is $k-$WUR.
\end{proof}
The proof of the subsequent corollary follows in  similar lines of the proof of \Cref{thrm WUR MLUR}.
\begin{corollary}
	Let $x \in S_X.$ Then the following statements are equivalent.
	\begin{enumerate}
		\item $X$ is $k-$WLUR at $x.$
		\item If $(x_n^{(1)}), (x_n^{(2)}), \dots, (x_n^{(k+2)})$ are $(k+2)-$sequences in $S_X$ such that $x_n^{(1)}=x$ for all $n \in \mathbb{N}$ and $\|(k+1)x_n^{(k+2)}-\sum_{i=1}^{k+1}x_n^{(i)}\| \to 0,$ then $D_k[(x_n^{(\alpha_i)})_{i=1}^{k+1}; (f_j)_{j=1}^{k}] \to 0$ for all $f_1, f_2, \dots, f_k \in S_{X^*}$ and $\alpha \in \mathcal{S}_{k+2}(k+1).$
		\item If $(x_n^{(1)}), (x_n^{(2)}), \dots, (x_n^{(k+2)})$ are $(k+2)-$sequences in $S_X$ such that $x_n^{(1)}=x$ for all $n \in \mathbb{N}$ and $\|(k+1)x_n^{(k+2)}-\sum_{i=1}^{k+1}x_n^{(i)}\| \to 0,$ then $D_k[(x_n^{(\alpha_i)})_{i=1}^{k+1}; (f_j)_{j=1}^{k}] \to 0$ for all $f_1, f_2, \dots, f_k \in S_{X^*}$  and for some  $\alpha \in \mathcal{S}_{k+2}(k+1).$
	\end{enumerate}
\end{corollary}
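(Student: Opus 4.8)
The plan is to mirror the proof of \Cref{thrm WUR MLUR}, replacing every appeal to the global $k-$WUR hypothesis by the sequential characterization of $k-$WLUR at $x$ (the corollary following \Cref{equkwur}), while exploiting the extra constraint $x_n^{(1)}=x$. I would establish the cyclic implications $(1)\Rightarrow(2)\Rightarrow(3)\Rightarrow(1)$, with $(2)\Rightarrow(3)$ being immediate.

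For $(1)\Rightarrow(2)$, I would first note that since $x_n^{(k+2)}\in S_X$, the hypothesis $\|(k+1)x_n^{(k+2)}-\sum_{i=1}^{k+1}x_n^{(i)}\|\to 0$ forces $\|\sum_{i=1}^{k+1}x_n^{(i)}\|\to k+1$; as $x_n^{(1)}=x$, this reads $\frac{1}{k+1}\|x+\sum_{i=2}^{k+1}x_n^{(i)}\|\to 1$. The sequential form of $k-$WLUR at $x$ then yields $D_k[x,(x_n^{(i)})_{i=2}^{k+1};(f_j)_{j=1}^{k}]\to 0$, which is exactly the case $\alpha=\{1,2,\dots,k+1\}$. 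To cover the remaining $\alpha\in\mathcal{S}_{k+2}(k+1)$, all of which contain the index $k+2$, I would reuse the averaging device from \Cref{thrm WUR MLUR}: setting $y_n=\frac{1}{k+1}\sum_{i=1}^{k+1}x_n^{(i)}$, the identity $|D_k[(x_n^{(i)})_{i=1}^{k+1};(f_j)_{j=1}^{k}]|=(k+1)|D_k[y_n,(x_n^{(\beta_i)})_{i=1}^{k};(f_j)_{j=1}^{k}]|$, valid for every $\beta\in\mathcal{S}_{k+1}(k)$, gives $D_k[y_n,(x_n^{(\beta_i)})_{i=1}^{k};(f_j)_{j=1}^{k}]\to 0$; since $y_n-x_n^{(k+2)}\to 0$, \Cref{lem samir} transfers this to $D_k[x_n^{(k+2)},(x_n^{(\beta_i)})_{i=1}^{k};(f_j)_{j=1}^{k}]\to 0$, which is the convergence for $\alpha=\{k+2\}\cup\beta$.

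For $(3)\Rightarrow(1)$, I would verify the sequential characterization of $k-$WLUR at $x$ directly. Given $k-$sequences $(x_n^{(2)}),\dots,(x_n^{(k+1)})$ in $S_X$ with $\frac{1}{k+1}\|x+\sum_{i=2}^{k+1}x_n^{(i)}\|\to 1$, I would put $x_n^{(1)}=x$ and $x_n^{(k+2)}=\frac{\sum_{i=1}^{k+1}x_n^{(i)}}{\|\sum_{i=1}^{k+1}x_n^{(i)}\|}$, so that $\|(k+1)x_n^{(k+2)}-\sum_{i=1}^{k+1}x_n^{(i)}\|\to 0$. Statement $(3)$ then furnishes some $\alpha\in\mathcal{S}_{k+2}(k+1)$ along which $D_k[(x_n^{(\alpha_i)})_{i=1}^{k+1};(f_j)_{j=1}^{k}]\to 0$. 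If $\alpha=\{1,\dots,k+1\}$ this is already the desired conclusion; otherwise $\alpha=\{k+2\}\cup\beta$ for some $\beta\in\mathcal{S}_{k+1}(k)$, and I would run the averaging identity in reverse, first replacing $x_n^{(k+2)}$ by $y_n$ via \Cref{lem samir} and then applying $|D_k[(x_n^{(i)})_{i=1}^{k+1};(f_j)_{j=1}^{k}]|=(k+1)|D_k[y_n,(x_n^{(\beta_i)})_{i=1}^{k};(f_j)_{j=1}^{k}]|$ to recover $D_k[(x_n^{(i)})_{i=1}^{k+1};(f_j)_{j=1}^{k}]=D_k[x,(x_n^{(i)})_{i=2}^{k+1};(f_j)_{j=1}^{k}]\to 0$.

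The one genuinely delicate point, and the place where the local argument departs from the global one, is this last case of $(3)\Rightarrow(1)$: the index set $\alpha$ supplied by $(3)$ need not contain $1$, so the anchor vector $x$ may be absent from the determinant that is known to vanish. In \Cref{thrm WUR MLUR} this caused no difficulty because global $k-$WUR applies to arbitrary $(k+1)$-tuples, whereas here $k-$WLUR at $x$ is tied to $x$. The resolution is precisely the averaging identity, which holds for every $\beta\in\mathcal{S}_{k+1}(k)$ and therefore lets me pass from the vanishing of $D_k[y_n,(x_n^{(\beta_i)})_{i=1}^{k};(f_j)_{j=1}^{k}]$ back to the vanishing of the full determinant $D_k[(x_n^{(i)})_{i=1}^{k+1};(f_j)_{j=1}^{k}]$ no matter which index $\beta$ omits; this reinserts $x=x_n^{(1)}$ and closes the argument.
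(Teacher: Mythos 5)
Your proposal is correct and is exactly the adaptation the paper intends: the paper gives no separate argument, stating only that the corollary ``follows in similar lines of the proof of \Cref{thrm WUR MLUR},'' and your proof mirrors that theorem step by step, replacing global $k-$WUR by the sequential characterization of $k-$WLUR at $x$ via the constraint $x_n^{(1)}=x$. Your treatment of the delicate case in $(3)\Rightarrow(1)$, where the index set $\alpha$ omits $1$, is handled correctly by the averaging identity (valid for every $\beta\in\mathcal{S}_{k+1}(k)$) together with \Cref{lem samir}, which is precisely the mechanism used in the paper's proof of the global theorem.
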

In the following proposition and example, we discuss some relationships between rotundity properties of a space and its double dual.

\begin{proposition}\label{prop WUR Rotund}
	If $X$ is $k-$WUR, then $X^{**}$ is $k-$rotund.
\end{proposition}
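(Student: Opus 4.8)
The plan is to argue by contradiction, using Goldstine's theorem to pull finite-dimensional data down from $X^{**}$ into $B_X$ and then invoking the sequential characterization of $k$-WUR in \Cref{equkwur}. Assume $X^{**}$ is not $k$-rotund. Then there exist $x_1^{**}, x_2^{**}, \dots, x_{k+1}^{**} \in S_{X^{**}}$ with $\tfrac{1}{k+1}\bigl\Vert \sum_{i=1}^{k+1} x_i^{**}\bigr\Vert = 1$ and $V[(x_i^{**})_{i=1}^{k+1}] > 0$, and the goal is to contradict the $k$-WUR hypothesis on $X$.

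The first step is to replace the positive volume, which is a supremum over $S_{X^{***}}$, by a single nonzero determinant driven by functionals from $X^*$. Applying \Cref{remark vol}(3) to the space $X^{**}$, the condition $V[(x_i^{**})_{i=1}^{k+1}] > 0$ is equivalent to linear independence of $\{x_i^{**} - x_{k+1}^{**} : 1 \le i \le k\}$. Since $X^*$ norms $X^{**}$ it separates the points of $X^{**}$, so its restriction to the finite-dimensional span of these $k$ vectors is the full dual of that span. Hence I can choose $f_1, f_2, \dots, f_k \in X^*$ with $\det[f_j(x_i^{**} - x_{k+1}^{**})]_{j,i} \neq 0$; after normalizing each $f_j$ (which only rescales the determinant by a nonzero factor), I obtain $f_1, \dots, f_k \in S_{X^*}$ with $D_k[(x_i^{**})_{i=1}^{k+1}; (f_j)_{j=1}^{k}] = c \neq 0$.

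The second step builds the approximating sequences. For each $n$, pick $h_n \in S_{X^*}$ with $\bigl(\sum_{i=1}^{k+1} x_i^{**}\bigr)(h_n) > k+1 - \tfrac{1}{n}$, which is possible because $\bigl\Vert \sum_i x_i^{**}\bigr\Vert = k+1$. By Goldstine's theorem ($B_X$ is weak$^*$-dense in $B_{X^{**}}$), for each $i$ choose $x_n^{(i)} \in B_X$ with $\vert f_j(x_n^{(i)}) - x_i^{**}(f_j)\vert < \tfrac{1}{n}$ for $1 \le j \le k$ and $\vert h_n(x_n^{(i)}) - x_i^{**}(h_n)\vert < \tfrac{1}{n}$. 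The $h_n$-estimates give $\bigl\Vert \sum_i x_n^{(i)}\bigr\Vert \ge \sum_i h_n(x_n^{(i)}) \to k+1$, while $\bigl\Vert \sum_i x_n^{(i)}\bigr\Vert \le k+1$ automatically since each $x_n^{(i)} \in B_X$; thus $\tfrac{1}{k+1}\bigl\Vert \sum_i x_n^{(i)}\bigr\Vert \to 1$. The $f_j$-estimates give $f_j(x_n^{(i)}) \to x_i^{**}(f_j)$ for all $i,j$. Now \Cref{equkwur} (statement $(3)$, for sequences in $B_X$) forces $D_k[(x_n^{(i)})_{i=1}^{k+1}; (f_j)_{j=1}^{k}] \to 0$; but this determinant is a polynomial in its entries $f_j(x_n^{(i)})$, which converge to $x_i^{**}(f_j)$, so it also converges to $D_k[(x_i^{**})_{i=1}^{k+1}; (f_j)_{j=1}^{k}] = c \neq 0$, the desired contradiction.

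The main obstacle is reconciling the sequential nature of the $k$-WUR hypothesis with the fact that Goldstine's theorem provides only weak$^*$-density (in general via nets); this is resolved by controlling only the finitely many functionals $f_1, \dots, f_k, h_n$ at stage $n$ with error $\tfrac{1}{n}$, which yields genuine sequences to which \Cref{equkwur} applies. The second delicate point, that functionals from $X^*$ already suffice to witness the positive volume in $X^{**}$, rests entirely on $X^*$ separating points of $X^{**}$ together with \Cref{remark vol}(3).
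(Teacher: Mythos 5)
Your proof is correct, and it takes a genuinely different route from the paper's. The paper keeps the witnessing functionals $\tilde g_1, \dots, \tilde g_k$ in $S_{X^{***}}$, and consequently must apply Goldstine's theorem twice, with nets: it first takes weak$^*$-convergent nets in $B_X$ (using a subnet-with-common-index-set device and weak$^*$ lower semicontinuity of the norm to obtain $\Vert \sum_{i=1}^{k+1} x_\beta^{(i)} \Vert \to k+1$), deduces $D_k[(x_i^{**})_{i=1}^{k+1};(f_j)_{j=1}^{k}]=0$ for all $f_1,\dots,f_k \in S_{X^*}$, and then approximates each $\tilde g_j$ weak$^*$ by a net in $B_{X^*}$ to force $D_k[(x_i^{**})_{i=1}^{k+1};(\tilde g_j)_{j=1}^{k}]=0$, the contradiction. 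You instead eliminate $X^{***}$ at the outset: by \Cref{remark vol}, positive volume amounts to linear independence of $\{x_i^{**}-x_{k+1}^{**} : 1 \le i \le k\}$, and since $X^*$ separates points of $X^{**}$, its restrictions to the span of these $k$ vectors exhaust the dual of that (finite-dimensional) span, so a nonzero determinant is already witnessed by $f_1,\dots,f_k \in S_{X^*}$; your surjectivity claim is correctly justified, since a proper image in the dual of the span would be annihilated by a nonzero vector, contradicting separation. This buys two things: only one application of Goldstine is needed, and it can be made genuinely sequential, because at stage $n$ only the finitely many functionals $f_1,\dots,f_k,h_n$ must be controlled with error $1/n$ --- your norming functionals $h_n$ of $\sum_i x_i^{**}$ replacing the paper's lower-semicontinuity step --- after which the sequential characterization \Cref{equkwur}(3) applies verbatim, whereas the paper tacitly applies the sequence formulation of $k$-WUR to nets (harmless, since the modulus definition is uniform over such configurations, but left implicit). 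The final limit passage is sound in both arguments, as the determinant is a polynomial in the entries $f_j(x_n^{(i)})$, which converge to $x_i^{**}(f_j)$.
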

\begin{proof}
	Suppose $X^{**}$ is not $k-$rotund. Then there exist $(k+1)$ elements  $x_1^{**}, x_2^{**}, \dots, x_{k+1}^{**}$ in $S_{X^{**}}$ such that $\|\sum_{i=1}^{k+1}x_i^{**}\|=k+1,$ but $D_k[(x_i^{**})_{i=1}^{k+1}; (\tilde{g}_j)_{j=1}^{k}]=\epsilon$ for some $\tilde{g}_1, \tilde{g}_2, \dots, \tilde{g}_k \in S_{X^{***}}$ and $\epsilon>0.$ For every $1 \leq i \leq k+1,$ by Goldstine's theorem, there exists a net $(x_{\alpha_i}^{(i)})_{\alpha_i \in I_i}$ in $B_X$ such that $x_{\alpha_i}^{(i)} \xrightarrow{w^*} x_i^{**}.$ Then, by \cite[Page 150]{Megg1998}, there exists a subnet $(x_{\beta}^{(i)})$ of $(x_{\alpha_i}^{(i)})$ with the same index set for every $1 \leq i \leq k+1$. Now, using the  $w^*-$lower semi-continuity of the norm function, we have
	\[
	k+1 = \left\Vert \sum\limits_{i=1}^{k+1}x_i^{**} \right\Vert \leq \liminf_{\beta} \left\Vert \sum\limits_{i=1}^{k+1}x_{\beta}^{(i)} \right\Vert \leq \limsup_{\beta} \left\Vert \sum\limits_{i=1}^{k+1}x_{\beta}^{(i)} \right\Vert \leq k+1,
	\]
	which  implies $\|\sum_{i=1}^{k+1} x_{\beta}^{(i)}\| \to k+1.$ Therefore, by assumption, $D_k[(x_{\beta}^{(i)})_{i=1}^{k+1}; (f_j)_{j=1}^{k}] \to 0$ for all $f_1, f_2, \dots, f_k \in S_{X^*}.$ Since $x_{\beta}^{(i)} \xrightarrow{w^*} x_i^{**}$ for all $1 \leq i \leq k+1$, we have  $D_k[(x_{\beta}^{(i)})_{i=1}^{k+1}; (f_j)_{j=1}^{k}] \to D_k[(x_{i}^{**})_{i=1}^{k+1}; (f_j)_{j=1}^{k}]$ for all $f_1, f_2, \dots, f_k \in S_{X^*}.$
	Therefore, $D_k[(x_{i}^{**})_{i=1}^{k+1}; (f_j)_{j=1}^{k}]=0$ for all $f_1, f_2, \dots, f_{k} \in S_{X^*}.$ Further for every $1 \leq j \leq k,$ by Goldstine's theorem, there exists a net  $(f_{\lambda_j}^{(j)})_{\lambda_j \in J_j}$ in $B_{X^*}$ such that $f_{\lambda_j}^{(j)} \xrightarrow{w^*} \tilde{g}_j.$ Then for every $1 \leq j \leq k$ it is easy to find a subnet $(f_{\gamma}^{(j)})$ of $(f_{\lambda_j}^{(j)})$ with the same index set. Since $f_{\gamma}^{(j)} \xrightarrow{w^*} \tilde{g}_j$ for all $1 \leq j \leq k$, it follows that
	$D_k[(x_i^{**})_{i=1}^{k+1}; (f_{\gamma}^{(j)})_{j=1}^{k}] \to D_k[(x_i^{**})_{i=1}^{k+1}; (\tilde{g}_j)_{j=1}^{k}].$ Thus $D_k[(x_i^{**})_{i=1}^{k+1}; (\tilde{g}_j)_{j=1}^{k}] = 0,$ which is a contradiction. Hence $X^{**}$ is $k-$rotund.
\end{proof}
The following example illustrates that in \Cref{prop WUR Rotund} the assumption $k-$WUR cannot be replaced by $k-$LUR (hence, $k-$WLUR). Further, we will see in \Cref{eg WUR rotund suff} that the property $k-$WUR of a space $X$ is not sufficient for the space $X^{**}$ to be $k-$WMLUR. The converse of \Cref{prop WUR Rotund} need not be true in general. To see this, consider a strongly rotund space which is not $k-$WLUR (see, \Cref{eg kWUR,eg k+1 WUR}).
\begin{example}
	Let $X= (l_1, \|\cdot\|_1)$ and $k \in \mathbb{Z}^{+}.$ By \cite[Chapter II, Theorem 2.6]{DeGZ1993}, $X$ admits an equivalent norm (say, $\|\cdot\|_r)$ such that $Y= (l_1, \|\cdot\|_r)$ is LUR. Note that, by \cite[Chapter II, Corollary 3.5]{DeGZ1993}, $Y^*$ is not smooth. Thus, $Y^{**}$ is not rotund. Now, consider the Banach space $Z= l_2(Y).$ Then, by \cite[Theorem 1.1]{Lova1955}, $Z$ is LUR (hence, $k-$LUR). Clearly, $Z^{**} \cong l_2(Y^{**}).$ Therefore, by \cite[Corollary 2.10]{Veen2021}, $Z^{**}$ is not $k-$rotund.
\end{example}

We present some necessary and/or sufficient conditions for the notions $k-$WUR, $k-$WLUR and $k-$WMLUR  in terms of property $k-$$w$UC, $k-$$w$USCh and $k-$$w$SCh.

In the next result, we obtain some characterization of $k-$WUR in terms of property $k-$$w$UC.

\begin{theorem}\label{thrm WUR uc}
		Let  $r > 1.$ Then the following statements are equivalent.
	\begin{enumerate}
		\item $X$ is $k-$WUR.
		\item If $A$ and $B$ are non-empty subsets of $X$ such that $A$ is convex, then $(A,B)$ has property $k-$$w$UC.
		\item $(B_X,rS_X)$ has property $k-$$w$UC.
		\item $(S_X,rS_X)$ has property $k-$$w$UC.
			\end{enumerate}
\end{theorem}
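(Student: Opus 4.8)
The plan is to establish the cycle $(1)\Rightarrow(2)\Rightarrow(3)\Rightarrow(4)\Rightarrow(1)$, exploiting the sequential characterization of $k-$WUR from \Cref{equkwur} and the sequential formulation of property $k-$$w$UC from \Cref{def property kwuc}. The implications $(2)\Rightarrow(3)\Rightarrow(4)$ should be routine: $(3)$ is the special case $A=B_X$, $B=rS_X$ of $(2)$ (noting $B_X$ is convex), and $(4)$ follows from $(3)$ since $S_X\subseteq B_X$, so any sequences realizing the hypotheses of property $k-$$w$UC for $(S_X,rS_X)$ also realize them for $(B_X,rS_X)$; here one uses that $d(B_X,rS_X)=d(S_X,rS_X)=r-1$.

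For $(1)\Rightarrow(2)$, suppose $X$ is $k-$WUR and let $A$ be convex. Take $(k+1)-$sequences $(x_n^{(i)})_{i=1}^{k+1}$ in $A$ and $(y_n)$ in $B$ with $\|x_n^{(i)}-y_n\|\to d(A,B)$ for all $i$. Writing $d=d(A,B)$, the key convexity observation is that the midpoint $\frac{1}{k+1}\sum_{i=1}^{k+1}x_n^{(i)}$ lies in $A$, so $\|\frac{1}{k+1}\sum_{i=1}^{k+1}x_n^{(i)}-y_n\|\geq d$; combined with the triangle inequality $\|\frac{1}{k+1}\sum_{i=1}^{k+1}(x_n^{(i)}-y_n)\|\leq\frac{1}{k+1}\sum_{i=1}^{k+1}\|x_n^{(i)}-y_n\|\to d$, this forces $\|\frac{1}{k+1}\sum_{i=1}^{k+1}(x_n^{(i)}-y_n)\|\to d$. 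Setting $u_n^{(i)}=\frac{x_n^{(i)}-y_n}{d}$ (assuming $d>0$; the case $d=0$ makes the $u_n^{(i)}$ converge to the same point and the determinants vanish by translation invariance), we have $\|u_n^{(i)}\|\to 1$ and $\frac{1}{k+1}\|\sum_{i=1}^{k+1}u_n^{(i)}\|\to 1$. Statement $(4)$ of \Cref{equkwur} then yields $D_k[(u_n^{(i)})_{i=1}^{k+1};(f_j)_{j=1}^{k}]\to 0$, and by \Cref{volume property}$(1)$ (translation invariance) together with \Cref{volume property}$(2)$ (the homogeneity factor $d^k$), this is equivalent to $D_k[(x_n^{(i)})_{i=1}^{k+1};(f_j)_{j=1}^{k}]\to 0$. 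Hence $(A,B)$ has property $k-$$w$UC.

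The implication $(4)\Rightarrow(1)$ is where the main work lies, and I expect it to be the principal obstacle because we must manufacture, from arbitrary near-optimizing sequences on $S_X$, a companion sequence on $rS_X$ that witnesses property $k-$$w$UC. Given $(k+1)-$sequences $(x_n^{(i)})_{i=1}^{k+1}$ in $S_X$ with $\frac{1}{k+1}\|\sum_{i=1}^{k+1}x_n^{(i)}\|\to 1$, the natural candidate is $y_n=r\,\frac{\sum_{i=1}^{k+1}x_n^{(i)}}{\|\sum_{i=1}^{k+1}x_n^{(i)}\|}\in rS_X$, the radial projection of the normalized average. One then verifies $\|x_n^{(i)}-y_n\|\to d(S_X,rS_X)=r-1$: indeed $\|x_n^{(i)}-y_n\|\geq \|y_n\|-\|x_n^{(i)}\|=r-1$, while for the upper bound one writes $y_n=r z_n$ with $z_n=\frac{1}{k+1}\sum_i x_n^{(i)}\cdot\frac{k+1}{\|\sum_i x_n^{(i)}\|}$ and uses that $z_n$ is asymptotically the average of the $x_n^{(i)}$ (since $\frac{1}{k+1}\|\sum_i x_n^{(i)}\|\to 1$), so $\|x_n^{(i)}-z_n\|$ stays controlled and $\|x_n^{(i)}-y_n\|=\|x_n^{(i)}-z_n\|+(r-1)\|z_n\|+o(1)$ can be bounded above by $r-1+o(1)$ after a short estimate. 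Once $\|x_n^{(i)}-y_n\|\to r-1$ is secured for each $i$, statement $(4)$ gives $D_k[(x_n^{(i)})_{i=1}^{k+1};(f_j)_{j=1}^{k}]\to 0$ for all $f_1,\dots,f_k\in S_{X^*}$, which is precisely condition $(2)$ of \Cref{equkwur}, and therefore $X$ is $k-$WUR. The delicate point throughout is the upper estimate $\|x_n^{(i)}-y_n\|\leq r-1+o(1)$; I would handle it by first passing to the asymptotically exact average and then invoking the radial-projection inequality $\|\,r w/\|w\| - v\,\|\le \|rw/\|w\|-w\|+\|w-v\|=(r-\|w\|)+\|w-v\|$ with $w=\frac{1}{k+1}\sum_i x_n^{(i)}$ and $v=x_n^{(i)}$, together with \Cref{lem samir} to transfer determinant convergence across the normalizing scalars.
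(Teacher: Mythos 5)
Your implications $(1)\Rightarrow(2)$, $(2)\Rightarrow(3)$ and $(3)\Rightarrow(4)$ are correct and essentially identical to the paper's: the $d(A,B)=0$ case, the normalization $u_n^{(i)}=(x_n^{(i)}-y_n)/d$ handled through \Cref{equkwur}(4) and \Cref{volume property}, and the observation $d(B_X,rS_X)=d(S_X,rS_X)$ all match. The genuine gap is in $(4)\Rightarrow(1)$. You claim that for arbitrary sequences $(x_n^{(i)})$ in $S_X$ with $\frac{1}{k+1}\|\sum_{i}x_n^{(i)}\|\to 1$, the radial witness $y_n=r\sum_i x_n^{(i)}/\|\sum_i x_n^{(i)}\|$ satisfies $\|x_n^{(i)}-y_n\|\to r-1$. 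This is false whenever $1<r<k+1$, and your own radial-projection inequality shows why: it leaves the residual term $\|w_n-x_n^{(i)}\|$, where $w_n$ is the average, and this term does \emph{not} tend to $0$ --- the points $x_n^{(i)}$ need not cluster at their average under the hypotheses at hand (that kind of clustering is exactly what $k$-WUR does not provide). Concretely, take $X=(\mathbb{R}^2,\|\cdot\|_\infty)$, $k=1$, $r=3/2$, $x_n^{(1)}=(1,1)$, $x_n^{(2)}=(1,-1)$: then $y_n=(3/2,0)$ and $\|x_n^{(i)}-y_n\|_\infty=1\neq \tfrac12=r-1$. The best unconditional estimate is $\|x_n^{(i)}-y_n\|\le \left\vert 1-\tfrac{r}{k+1}\right\vert+\tfrac{rk}{k+1}+o(1)$, which equals $r-1+o(1)$ only when $r\ge k+1$. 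So your argument establishes $(4)\Rightarrow(1)$ only for $r\ge k+1$, whereas the theorem asserts the equivalence for every $r>1$.

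The missing idea is a transfer of the hypothesis from radius $r$ to radius $k+1$ \emph{before} constructing the witness. The paper does this as follows: by \Cref{kwusch kwuc}(2), property $k-$$w$UC of $(S_X,rS_X)$ yields that $S_X$ is $k-$$w$USCh on the set $B_0$, which here is all of $rS_X$ (each $y\in rS_X$ has the nearest point $y/r\in S_X$); then \Cref{thrm wUSCh}(2) transports $k-$$w$USCh from $rS_X$ to $tS_X$ for every $t\in(1,\infty)$, in particular $t=k+1$; and \Cref{kwusch kwuc}(1) converts back to property $k-$$w$UC of $(S_X,(k+1)S_X)$. At this special radius your computation closes, because $x_n^{(i)}-\sum_{j}x_n^{(j)}=-\sum_{j\neq i}x_n^{(j)}$ has norm at most $k=(k+1)-1$; this is precisely the paper's estimate $\left\Vert (k+1)\tfrac{y_n}{\|y_n\|}-x_n^{(i)}\right\Vert\le \left\vert \tfrac{k+1}{\|y_n\|}-1\right\vert+\tfrac{(k+1)k}{\|y_n\|}\to k=d(S_X,(k+1)S_X)$. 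With that repair (or by restricting your direct argument to $r\ge k+1$ and citing the transfer results for small $r$), the rest of your proposal is sound.
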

\begin{proof} $(1) \Rightarrow (2)$: Let $A$ and $B$ be  non-empty  subsets   of $X$ and $A$ be convex. Let $(x_n^{(1)}), (x_n^{(2)}),$ $ \dots, (x_n^{(k+1)})$ be $(k+1)-$sequences in  $A,$ $(y_n)$ be a sequence in $B$ such that $\|x_n^{(i)}-y_n\| \to d(A,B)$ for all $1 \leq i \leq k+1$ and $f_1, f_2, \dots, f_k \in S_{X^*}.$ If $d(A,B)=0,$ then it is clear that $(A,B)$ has property $k-$$w$UC. Assume $d(A,B)>0.$ Since $A$ is convex, we have
	\[
	d(A,B) \leq \left \Vert  \frac{1}{k+1}\sum\limits_{i=1}^{k+1}x_n^{(i)}-y_n \right \Vert= \frac{1}{k+1}\left \Vert \sum\limits_{i=1}^{k+1} (x_n^{(i)}-y_n) \right \Vert \leq  \frac{1}{k+1} \sum\limits_{i=1}^{k+1}\| x_n^{(i)}-y_n \|
	\]
		and hence $\frac{1}{k+1}\| \sum_{i=1}^{k+1} (x_n^{(i)}-y_n) \| \to d(A,B).$ Now, by $(1)$, we have $D_k\left[\left(\frac{x_n^{(i)}-y_n}{d(A,B)}\right)_{i=1}^{k+1};(f_j)_{j=1}^{k}\right] \to 0.$ Therefore, by \Cref{remark vol}, we have
	$D_k[(x_n^{(i)})_{i=1}^{k+1};(f_j)_{j=1}^{k}] \to 0.$ Hence, $(A,B)$ has property $k-$$w$UC.\\
	$(2) \Rightarrow (3)$: Obvious.\\
	$(3) \Rightarrow (4)$: Since $S_X \subseteq B_X$ and $d(S_X, rS_X)= d(B_X, rS_X),$ it follows from the assumption  that $(S_X,rS_X)$ has property $k-$$w$UC.\\
	$(4) \Rightarrow (1)$: Let $(S_X, rS_X)$ has property $k-$$w$UC. By \Cref{kwusch kwuc} and  \Cref{thrm wUSCh}, it follows that $(S_X, (k+1)S_X)$ has property $k-$$w$UC. Let $(x_n^{(1)}), (x_n^{(2)}), \dots, (x_n^{(k+1)})$ be $(k+1)-$sequences in $S_{X}$ with $\|\sum_{i=1}^{k+1}x_n^{(i)}\| \to k+1$ and  $f_1,f_2,\dots, f_k \in S_{X^*}.$ For every $n \in \mathbb{N},$ define $y_n= \sum_{i=1}^{k+1}x_n^{(i)}.$ Then for all $1 \leq i \leq k+1,$ we have
\[
\left \Vert (k+1) \frac{y_n}{\|y_n\|}- x_n^{(i)} \right \Vert  = \left \Vert \frac{(k+1)x_n^{(i)}}{\|y_n\|}+ (k+1) \frac{y_n-x_n^{(i)}}{\|y_n\|}-x_n^{(i)} \right \Vert \\
 \leq \left \vert  \frac{k+1}{\|y_n\|}-1 \right \vert + \frac{(k+1)k}{\|y_n\|}.
\]	 
	Thus,  $\left \Vert x_n^{(i)}-(k+1) \frac{y_n}{\|y_n\|}\right \Vert \to d(S_X, (k+1)S_X)$ for all $1 \leq i \leq k+1.$ Since $(S_X,(k+1)S_X)$ has property $k-$$w$UC, we get $D_k[(x_n^{(i)})_{i=1}^{k+1};(f_j)_{j=1}^{k}] \to 0.$ Hence, $X$ is $k-$WUR.
\end{proof}
 The following corollary is an immediate consequence of \Cref{thrm wUSCh} and  \Cref{thrm WUR uc}. However, the converse need not be true in general.
\begin{corollary}\label{thrm WUR UC SX}
	Let $r \in (0,1).$ If $(S_X,rS_X)$ has property $k-$$w$UC, then $X$ is $k-$WUR.
\end{corollary}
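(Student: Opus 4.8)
The plan is to connect the hypothesis---property $k-$$w$UC of the pair $(S_X, rS_X)$ with $r \in (0,1)$---to the characterization in \Cref{thrm WUR uc}, which is phrased only for radii strictly greater than $1$. The route I would take is to pass through the best-approximation notion $k-$$w$USCh, using \Cref{kwusch kwuc} as a two-way bridge and \Cref{thrm wUSCh}$(3)$ to transport the radius from $(0,1)$ out to a value exceeding $1$.

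First I would pin down the relevant distance. For any $y \in rS_X$ and $x \in S_X$ one has $\|x-y\| \geq \big|\,\|x\|-\|y\|\,\big| = 1-r$, while the choice $x = y/r \in S_X$ attains $\|x-y\| = (1/r - 1)\,r = 1-r$. Hence $d(S_X, rS_X) = 1-r$, and moreover \emph{every} $y \in rS_X$ realizes this distance. Consequently the set $B_0 = \{y \in rS_X : \|x-y\| = d(S_X, rS_X) \text{ for some } x \in S_X\}$ occurring in \Cref{kwusch kwuc}$(2)$ is all of $rS_X$. Applying \Cref{kwusch kwuc}$(2)$ to the pair $(S_X, rS_X)$ then yields that $S_X$ is $k-$$w$USCh on $B_0 = rS_X$.

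Now, since $r \in (0,1)$, \Cref{thrm wUSCh}$(3)$ upgrades this to the assertion that $S_X$ is $k-$$w$USCh on $tS_X$ for \emph{every} $t \in (0,\infty)$. Fixing any $t > 1$ and invoking \Cref{kwusch kwuc}$(1)$ converts this back into the statement that $(S_X, tS_X)$ has property $k-$$w$UC. Finally, because $t > 1$, \Cref{thrm WUR uc} is applicable, and its equivalence of statements $(4)$ and $(1)$ gives that $X$ is $k-$WUR. The only genuinely delicate point is the radius mismatch: the hypothesis lives at $r < 1$, where the distance $1-r$ is realized by shrinking toward the origin, whereas \Cref{thrm WUR uc} is built for $t > 1$, where the distance $t-1$ is realized by expanding outward. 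The detour through $k-$$w$USCh is exactly what legitimizes this transfer, since \Cref{thrm wUSCh}$(3)$ is the one tool available that moves a radius from $(0,1)$ across to all of $(0,\infty)$; everything else is routine bookkeeping with \Cref{kwusch kwuc}.
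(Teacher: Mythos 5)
Your proof is correct and follows exactly the route the paper intends: the paper states the corollary as an immediate consequence of \Cref{thrm wUSCh} and \Cref{thrm WUR uc}, with \Cref{kwusch kwuc} serving as the bridge between property $k-$$w$UC and $k-$$w$USCh, just as in the paper's own proof of $(4) \Rightarrow (1)$ in \Cref{thrm WUR uc}. Your verification that $d(S_X, rS_X)=1-r$ is attained by every point of $rS_X$ (so that $B_0=rS_X$ in \Cref{kwusch kwuc}$(2)$) is precisely the detail that makes the chain work.
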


Now, in view of \Cref{kwusch kwuc} and \Cref{thrm WUR uc}, we characterize  $k-$WUR spaces in terms of $k-$weakly uniformly strong  Chebyshevness of the corresponding closed unit ball.
\begin{theorem}\label{thrm k-WUR B_X}
	Let $r > 1.$ Then the following statements are equivalent.
	\begin{enumerate}
		\item  $X$ is $k-$WUR.
		\item  $B_X$ is $k-$$w$USCh on $X.$
		\item  $B_X$ is $k-$$w$USCh on $rS_X.$
	\end{enumerate}
\end{theorem}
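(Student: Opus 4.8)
The plan is to establish the cycle $(1) \Rightarrow (2) \Rightarrow (3) \Rightarrow (1)$, leveraging the machinery already built in \Cref{kwusch kwuc}, \Cref{thrm wUSCh}, and especially \Cref{thrm WUR uc}, which connects $k-$WUR to property $k-$$w$UC of various pairs. The key observation is that $k-$$w$USCh of $B_X$ is essentially property $k-$$w$UC for the pair $(B_X, \cdot)$ together with proximinality, so the bridge between the two theorems is short.

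First I would prove $(1) \Rightarrow (2)$. Assume $X$ is $k-$WUR and let $x \in X$. If $\|x\| \le 1$ then $x \in B_X$ is its own best approximation and proximinality is trivial; if $\|x\| > 1$, then $B_X$ is proximinal at $x$ since $x/\|x\|$ realizes the distance $d(x, B_X) = \|x\| - 1$ (in fact $B_X$ is proximinal on all of $X$ because it is weakly compact when $X$ is reflexive, but reflexivity follows here since $k-$WUR implies $k-$rotund and more; to avoid relying on that, I would simply note proximinality of the unit ball holds because the nearest-point map onto $B_X$ is the radial retraction). For the diameter condition, I would invoke the sequential characterization \Cref{prop kwuch}$(2)$: take $(k+1)$ sequences $(x_n^{(i)})$ in $B_X$ and $(y_n)$ in $X$ with $\|x_n^{(i)} - y_n\| - d(y_n, B_X) \to 0$, and show $D_k[(x_n^{(i)})_{i=1}^{k+1};(f_j)_{j=1}^k] \to 0$. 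This is exactly property $k-$$w$UC for the pair $(B_X, X)$, and since $B_X$ is convex, \Cref{thrm WUR uc}$(2)$ (with $A = B_X$, $B = X$) gives it directly from $(1)$.

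Next, $(2) \Rightarrow (3)$ is immediate: restricting the uniform strong Chebyshev condition from all of $X$ to the subset $rS_X$ preserves it, so $B_X$ being $k-$$w$USCh on $X$ implies it is $k-$$w$USCh on $rS_X$ for the given $r > 1$. This step requires no work beyond noting $rS_X \subseteq X$.

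The substantive direction is $(3) \Rightarrow (1)$, and I expect this to be the main obstacle. The plan is to translate $k-$$w$USCh of $B_X$ on $rS_X$ into property $k-$$w$UC of the pair $(B_X, rS_X)$ via \Cref{kwusch kwuc}$(1)$, then feed this into \Cref{thrm WUR uc}. Concretely, if $B_X$ is $k-$$w$USCh on $rS_X$ then $B_X$ is proximinal on $rS_X$ and, by \Cref{kwusch kwuc}$(1)$, the pair $(B_X, rS_X)$ has property $k-$$w$UC. Now \Cref{thrm WUR uc} asserts the equivalence of $(1)$, $(3)$ (which is precisely $(B_X, rS_X)$ having property $k-$$w$UC), and $(4)$ for any $r > 1$; hence property $k-$$w$UC of $(B_X, rS_X)$ yields that $X$ is $k-$WUR. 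The delicate point I would verify carefully is that the distance values align correctly, namely that $\|x_n^{(i)} - y_n\| \to d(B_X, rS_X) = r - 1$ forces the near-minimizing condition needed to apply the $w$USCh hypothesis; but this is handled by the same computation in \Cref{kwusch kwuc}$(1)$, where the inequality $0 \le \|x_n^{(i)} - y_n\| - d(y_n, B_X) \le \|x_n^{(i)} - y_n\| - d(B_X, rS_X)$ squeezes the relevant quantity to zero. Thus the whole argument reduces to correctly chaining \Cref{kwusch kwuc} and \Cref{thrm WUR uc}, with the proximinality of $B_X$ being the only genuinely new verification.
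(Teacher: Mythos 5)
Your cycle $(1) \Rightarrow (2) \Rightarrow (3) \Rightarrow (1)$ matches the paper's structure, and your step $(3) \Rightarrow (1)$ is exactly the paper's argument (chaining \Cref{kwusch kwuc}$(1)$ into \Cref{thrm WUR uc}), as is the trivial $(2) \Rightarrow (3)$. The genuine gap is in $(1) \Rightarrow (2)$: the hypothesis of \Cref{prop kwuch}$(2)$ with $B = X$, namely $\|x_n^{(i)} - y_n\| - d(y_n, B_X) \to 0$, is \emph{not} property $k$-$w$UC of the pair $(B_X, X)$. Property $k$-$w$UC of $(B_X, X)$ concerns sequences with $\|x_n^{(i)} - y_n\| \to d(B_X, X) = 0$, a far more restrictive assumption; indeed, when $d(A,B)=0$ the property holds trivially in \emph{every} Banach space (translate by $y_n$ via \Cref{volume property}$(1)$: all determinant entries below the top row of ones tend to $0$), which is exactly why the paper's proof of \Cref{thrm WUR uc} dismisses the case $d(A,B)=0$ in one line. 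So applying \Cref{thrm WUR uc}$(2)$ with $A = B_X$, $B = X$ yields a vacuous conclusion that cannot deliver $k$-$w$USCh on $X$: in any space failing $k$-WMLUR, the pair $(B_X, X)$ has property $k$-$w$UC while $B_X$ is not even $k$-$w$SCh on $X$ by \Cref{thrm MLUR B_X}. The two notions align only when every point of $B$ lies at the common distance $d(A,B)$ from $A$; that is precisely the content of \Cref{kwusch kwuc}$(2)$, whose set $B_0$ for the pair $(B_X, X)$ collapses to $B_X$ itself, where the statement is trivial.

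The repair --- and the paper's actual route --- is a short direct argument. For $(y_n)$ in $X \setminus B_X$ one has $d(y_n, B_X) = \|y_n\| - 1$, so the hypothesis gives $\|y_n\| - \|x_n^{(i)} - y_n\| \to 1$ for each $i$, and then $\frac{1}{k+1}\bigl\Vert \sum_{i=1}^{k+1} x_n^{(i)} \bigr\Vert \geq \|y_n\| - \frac{1}{k+1}\sum_{i=1}^{k+1}\|x_n^{(i)} - y_n\| \to 1$, whereupon the sequential characterization \Cref{equkwur}$(3)$ (stated for sequences in $B_X$, so no normalization is needed) gives $D_k[(x_n^{(i)})_{i=1}^{k+1}; (f_j)_{j=1}^{k}] \to 0$. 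Note that this handles $\|y_n\| \to \infty$ and $\|y_n\| \to 1^{+}$ at once, which is where any attempt to patch your reduction through the pairs $(B_X, tS_X)$ and \Cref{thrm wUSCh} would stall: those results produce a modulus for each fixed $t > 1$ separately, while $k$-$w$USCh on $X$ demands a single $\delta(\epsilon, (f_j)_{j=1}^{k})$ valid for all $x \in X$ simultaneously. Your proximinality observation (radial retraction: $d(x, B_X) = \|x\| - 1$ attained at $x/\|x\|$) is correct and is all that is needed on that front.
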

\begin{proof}$(1) \Rightarrow (2)$: It is enough to show that $B_X$ is $k-$$w$USCh on $X \backslash B_X.$ Let  $(x_n^{(1)}), (x_n^{(2)}), \dots, $ $  (x_n^{(k+1)})$ be $(k+1)-$sequences in  $B_X$, $(y_n)$ be a sequence in  $X \backslash B_X$ with $\|x_n^{(i)}-y_n\|-d(y_n,B_X) \to 0$ for all $1 \leq i \leq k+1.$ Note that for all $n \in \mathbb{N},$ we have $d(y_n,B_X)=\|y_n\|-1,$ which implies $\|y_n\|-\|x_n^{(i)}-y_n\| \to 1$ for all $1 \leq i \leq k+1.$ Since
	\[
 \frac{1}{k+1}\left \Vert \sum\limits_{i=1}^{k+1}x_n^{(i)}\right \Vert \geq \|y_n\|- \frac{1}{k+1}\left \Vert\sum\limits_{i=1}^{k+1}x_n^{(i)}-(k+1)y_n \right \Vert \geq  \|y_n\|- \frac{1}{k+1}\sum\limits_{i=1}^{k+1}\|x_n^{(i)}-y_n\|,	
	\]
	it follows that  $\frac{1}{k+1}\| \sum_{i=1}^{k+1}x_n^{(i)} \| \to 1.$ Thus, by $(1),$ we have $D_k[(x_n^{(i)})_{i=1}^{k+1};(f_j)_{j=1}^{k}] \to 0$ for all  $f_1,f_2,\dots,f_k \in S_{X^*}.$ Therefore, $B_X$ is $k-$$w$USCh on $X \backslash B_X.$\\
	$(2) \Rightarrow (3)$: Obvious.\\
	$(3) \Rightarrow (1)$: By $(3)$ and \Cref{kwusch kwuc}, we have  $(B_X,rS_X)$ has property $k-$$w$UC. Thus, by  \Cref{thrm WUR uc}, it follows that $X$ is $k-$WUR.
\end{proof}

In light of \Cref{thrm WUR uc,thrm k-WUR B_X}, we now present few examples to illustrate some of the implications mentioned in Section $2$ cannot be reversed in general. As mentioned immediately after \Cref{def k-SCh,def property kwuc}, the following example shows that, in general $k-$$w$USCh (respectively, property $k-$$w$UC) does not imply $k-$SCh (respectively, property $k-$UC).
\begin{example}\label{eg converse Chev}
 Let $k \in \mathbb{Z}^+$. Consider the space  $X$  as in \Cref{eg kWUR not KMLUR}. Since $X$ is $k-$WUR, by \Cref{thrm k-WUR B_X}, $B_X$ is $k-$$w$USCh on $X.$ However as mentioned in \Cref{eg kWUR not KMLUR}, $B_X$ is not $k-$SCh on $X.$	In addition, observe that $X$ is $k-$WUR, but not $k-$UR. Hence, by \Cref{thrm WUR uc}, $(B_X, (k+1)S_X)$ has property $k-$$w$UC. However, by \cite[Theorem 2.19]{KaVe2018}, $(B_X, (k+1)S_X)$ does not have property $k-$UC.
\end{example}
As noted in Section $2,$ from the following example we can observe that the converses of the statements of \Cref{prop wUSCh (k+1)} are not necessarily true.

\begin{example}\label{eg k+1 not k USCh}
	Let $k \geq 2$. Consider a $k-$WUR space $X$ which is not $(k-1)-$rotund   (see, \Cref{eg k not k-1 WUR}). Therefore, by \Cref{thrm k-WUR B_X} and \cite[Proposition 2.4]{GaTh2023}, $B_{X}$ is $k-$$w$USCh on $2S_{X}$, but $B_X$ is not $(k-1)-$Chebyshev on $2S_{X}.$ Further, by \Cref{thrm WUR uc}, $(B_{X}, 2S_{X})$ has property $k-$$w$UC, but it  does not have property $(k-1)-$$w$UC.
\end{example}

For any non-empty closed convex subset $C$ of $X$ and $\alpha>0,$ we define $C^{\alpha}=\{x \in X: d(x, C)= \alpha\}.$ For any $x^* \in S_{X^*},$ we say that the set $ker(x^*)=\{x \in X: x^*(x)=0\}$ is a hyperplane of $X.$

 It follows from \Cref{kwusch kwuc} and \Cref{thrm WUR uc} that, every proximinal convex subset $C$ of a $k-$WUR space is $k-$$w$USCh on $C^{\alpha}$ for any $\alpha>0.$ In fact something more is true. To see this we define a notion called $k-$equi weakly uniform strong Chebyshevity as follows. Let $\mathcal{M}$ be a collection of proximinal convex subsets of $X$ and $\alpha>0.$ We say that $\mathcal{M}$ is $k-$equi weakly uniformly strongly Chebyshev (in short, $k-$equi $w$USCh) on $\mathcal{M}^{\alpha},$ if  for every $\epsilon>0$ and $f_1,f_2,\dots,f_k \in S_{X^*}$ there exists $\delta>0$ such that $\vert D_k[(x_i)_{i=1}^{k+1};(f_j)_{j=1}^{k}]\vert \leq \epsilon$ whenever $M \in \mathcal{M},$ $x \in M^{\alpha}$ and $x_1,x_2,\dots,x_{k+1} \in P_M(x,\delta).$

\begin{theorem}\label{thrm kWUR equi}
	Let $\alpha>0.$ Then the following statements are equivalent.
	\begin{enumerate}
		\item $X$ is $k-$WUR.
		\item $\mathcal{C}$ is $k-$equi $w$USCh on $\mathcal{C}^{\alpha},$ where $\mathcal{C}$ is the collection of all proximinal convex subsets of $X.$
		\item $\mathcal{M}$ is $k-$equi $w$USCh on $\mathcal{M}^{\alpha},$ where $\mathcal{M}$ is the collection of all proximinal subspaces of $X.$
		\item $\mathcal{H}$ is $k-$equi $w$USCh on $\mathcal{H}^{\alpha},$ where $\mathcal{H}$ is the collection of all proximinal hyperplanes of $X.$
		\item $\mathcal{F}$ is $k-$equi $w$USCh on $\mathcal{F}^{\alpha},$ where $\mathcal{F}$ is the collection of all $k-$dimensional subspaces of $X.$
	\end{enumerate}
\end{theorem}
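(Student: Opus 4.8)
The plan is to establish the cycle $(1) \Rightarrow (2) \Rightarrow (3) \Rightarrow (4) \Rightarrow (1)$ together with $(3) \Rightarrow (5) \Rightarrow (1)$, so that each of the increasingly restrictive families $\mathcal{C} \supseteq \mathcal{M} \supseteq \mathcal{H}, \mathcal{F}$ feeds back to $k$-WUR. The implications $(2) \Rightarrow (3) \Rightarrow (4)$ and $(3) \Rightarrow (5)$ will be immediate: since $\mathcal{H} \subseteq \mathcal{M} \subseteq \mathcal{C}$ and $\mathcal{F} \subseteq \mathcal{M}$ (finite-dimensional subspaces are proximinal), any $\delta$ witnessing $k$-equi $w$USCh for the larger family serves verbatim for the smaller one. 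Throughout I would use the sequential description of $k$-WUR from \Cref{equkwur} and the determinant identities of \Cref{remark vol} and \Cref{lem samir}.

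For $(1) \Rightarrow (2)$ I argue by contradiction: if $\mathcal{C}$ fails to be $k$-equi $w$USCh on $\mathcal{C}^{\alpha}$, I extract $M_n \in \mathcal{C}$, $x_n \in M_n^{\alpha}$ and $x_n^{(1)}, \dots, x_n^{(k+1)} \in P_{M_n}(x_n, 1/n)$ with $|D_k[(x_n^{(i)})_{i=1}^{k+1};(f_j)_{j=1}^{k}]| > \epsilon$. Convexity of $M_n$ forces $\tfrac{1}{k+1}\sum_i x_n^{(i)} \in M_n$, so the chain $\alpha = d(x_n, M_n) \le \|x_n - \tfrac{1}{k+1}\sum_i x_n^{(i)}\| \le \tfrac{1}{k+1}\sum_i \|x_n - x_n^{(i)}\| \to \alpha$ gives $\tfrac{1}{k+1}\|\sum_i (x_n - x_n^{(i)})\| \to \alpha$. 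Normalising, $z_n^{(i)} = (x_n - x_n^{(i)})/\alpha$ satisfies $\|z_n^{(i)}\| \to 1$ and $\tfrac{1}{k+1}\|\sum_i z_n^{(i)}\| \to 1$, so $k$-WUR (via \Cref{equkwur}) yields $D_k[(z_n^{(i)})_{i=1}^{k+1};(f_j)_{j=1}^{k}] \to 0$; translating by $-x_n$ and scaling by $-\alpha$ through \Cref{remark vol} gives $|D_k[(x_n^{(i)})_{i=1}^{k+1};(f_j)_{j=1}^{k}]| = \alpha^{k}|D_k[(z_n^{(i)})_{i=1}^{k+1};(f_j)_{j=1}^{k}]| \to 0$, the desired contradiction.

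The two returns to $(1)$ are where the real work sits. For $(4) \Rightarrow (1)$, I take sequences in $S_X$ with $\tfrac{1}{k+1}\|\sum_i x_n^{(i)}\| \to 1$ and pick $g_n \in S_{X^*}$ norming $\sum_i x_n^{(i)}$, so that $g_n(x_n^{(i)}) \to 1$ and $H_n := \ker g_n$ is a proximinal hyperplane. Fixing $u_n \in S_X$ with $g_n(u_n) = 1$ and setting $p_n = \alpha u_n \in H_n^{\alpha}$, I project $p_n - \alpha x_n^{(i)}$ onto $H_n$ along $u_n$, i.e. $y_n^{(i)} := p_n - \alpha x_n^{(i)} - g_n(p_n - \alpha x_n^{(i)})u_n \in H_n$; a short estimate gives $\|p_n - y_n^{(i)}\| \to \alpha = d(p_n, H_n)$, so for large $n$ the $y_n^{(i)}$ lie in $P_{H_n}(p_n, \delta)$ and hypothesis $(4)$ bounds $|D_k[(y_n^{(i)})_{i};(f_j)_j]| \le \epsilon$. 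Since $y_n^{(i)} - \alpha u_n = -\alpha(x_n^{(i)} + \theta_n^{(i)}u_n)$ with $\theta_n^{(i)} = 1 - g_n(x_n^{(i)}) \to 0$, \Cref{remark vol} and \Cref{lem samir} (the perturbation $\theta_n^{(i)}u_n \to 0$) transfer this to $\limsup_n |D_k[(x_n^{(i)})_i;(f_j)_j]| \le \epsilon/\alpha^{k}$, whence $k$-WUR.

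For $(5) \Rightarrow (1)$ the same transfer mechanism applies, but the subspace must be built by hand: with the same $x_n^{(i)}, g_n$, I set $F_n = \operatorname{span}\{x_n^{(i)} - x_n^{(k+1)} : 1 \le i \le k\}$ and $p_n = \alpha x_n^{(k+1)}$, so the candidate points $y_n^{(i)} = \alpha(x_n^{(k+1)} - x_n^{(i)})$ lie in $F_n$ and $p_n - y_n^{(i)} = \alpha x_n^{(i)}$ has norm $\alpha$. \textbf{The main obstacle is the exact-distance requirement} $p_n \in F_n^{\alpha}$: unlike the hyperplane case one only has $d(p_n, F_n) =: d_n \le \alpha$, and $F_n$ could even degenerate below dimension $k$. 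I would handle this by a subsequence argument. Writing $D_k$ as $\det[f_j(x_n^{(i)} - x_n^{(k+1)})]_{j,i=1}^{k}$, if a normalised combination of the differences tends to $0$ the columns become asymptotically dependent and $D_k[(x_n^{(i)})_i;(f_j)_j] \to 0$ automatically; otherwise the differences stay uniformly independent, which (via Ascoli's distance formula and a coefficient blow-up argument) forces $d_n \to \alpha$ and $\dim F_n = k$, so $F_n \in \mathcal{F}$. On the non-degenerate subsequence I rescale by $c_n = \alpha/d_n \to 1$, putting $p_n' = c_n p_n \in F_n^{\alpha}$ and $y_n'^{(i)} = c_n y_n^{(i)} \in P_{F_n}(p_n', \delta)$, apply $(5)$, and conclude again that $\limsup_n |D_k[(x_n^{(i)})_i;(f_j)_j]| \le \epsilon/\alpha^{k}$. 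As every subsequence then has a further subsequence along which $D_k \to 0$, the whole sequence converges and $X$ is $k$-WUR.
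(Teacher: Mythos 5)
Your proposal is correct and, in its architecture, matches the paper's proof: the easy implications come from the inclusions $\mathcal{H},\mathcal{F}\subseteq\mathcal{M}\subseteq\mathcal{C}$ (with the same $\delta$ passing to sub-collections), $(1)\Rightarrow(2)$ is the same convexity/averaging computation followed by translation and scaling via \Cref{remark vol}, and the two returns to $(1)$ use the same constructions the paper uses, namely $H_n=\ker(g_n)$ with the projection $y\mapsto y-g_n(y)u_n$ for $(4)\Rightarrow(1)$, and $F_n=span\{x_n^{(i)}-x_n^{(k+1)}:1\leq i\leq k\}$ with the points $\alpha(x_n^{(k+1)}-x_n^{(i)})$ for $(5)\Rightarrow(1)$. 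Two local differences are worth recording. In $(4)\Rightarrow(1)$ you normalize so that $d(\alpha u_n,H_n)=\alpha$ exactly (legitimate, since $g_n$ attains its norm at $u_n=\sum_i x_n^{(i)}/\|\sum_i x_n^{(i)}\|$), whereas the paper works with $\alpha y_n/\beta_n$ where $\beta_n=d(y_n,H_n)\to 1$; the two devices are interchangeable. In $(5)\Rightarrow(1)$ the paper argues by contradiction, so $|D_k|>\epsilon$ for all $n$ forces linear independence of the differences by \Cref{remark vol}, making $F_n$ automatically $k$-dimensional, and it imports the key distance fact $d(x_n^{(k+1)},F_n)\to 1$ from \cite[Lemma 2.3]{VeVe2018}; you instead prove this step from scratch via your dichotomy, and your sketch is sound: if normalized coefficient vectors $\lambda_n$ give $\|\sum_i\lambda_n^{(i)}(x_n^{(i)}-x_n^{(k+1)})\|\to 0$, then the matrix $[f_j(x_n^{(i)}-x_n^{(k+1)})]$ has smallest singular value tending to $0$ and $D_k\to 0$ directly, while under uniform independence any nearest point $v_n\in F_n$ satisfies $\|v_n\|\leq 2$, hence has bounded coefficients, and applying the norming functional $g_n$ (with $g_n(x_n^{(i)})\to 1$ for all $i$) yields $g_n(x_n^{(k+1)}-v_n)\to 1$, so $d_n\to\alpha$ and your rescaling $c_n=\alpha/d_n\to 1$ places $p_n'$ exactly in $F_n^{\alpha}$. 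The net effect is the same theorem with the external citation replaced by a self-contained (slightly longer) argument; the paper's contradiction framing buys the $k$-dimensionality of $F_n$ for free, while your direct framing needs the dichotomy and subsequence bookkeeping but is independent of \cite{VeVe2018}.
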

	\begin{proof}
	$(1) \Rightarrow (2)$: Let $(C_n)$ be a sequence of proximinal convex subsets of $X.$ Let $(y_n^{(1)}), (y_n^{(2)}), \dots,$ $(y_n^{(k+1)})$ be $(k+1)-$sequences with $y_n^{(i)} \in C_n$ for all $n \in \mathbb{N}$ and $1 \leq i \leq k+1,$ $(x_n)$ be a sequence with  $x_n \in C_n^{\alpha}$ for all $n \in \mathbb{N}$ such that $\|y_n^{(i)}-x_n\| \to \alpha$ for all $1 \leq i \leq k+1$ and  $f_1, f_2, \dots, f_k \in S_{X^*}.$ Since $C_n$ is convex, we have
\[
 d(x_n,C_n) \leq \left\Vert \frac{1}{k+1}\sum\limits_{i=1}^{k+1}y_n^{(i)}-x_n \right\Vert= \frac{1}{k+1}\left\Vert\sum_{i=1}^{k+1}(y_n^{(i)}-x_n) \right\Vert \leq \frac{1}{k+1} \sum\limits_{i=1}^{k+1}\|y_n^{(i)}-x_n\|
\]
and hence $  \frac{1}{k+1}\Vert\sum_{i=1}^{k+1}(y_n^{(i)}-x_n) \Vert \to \alpha.$ Thus, by $(1),$ it follows that $D_k[(\frac{y_n^{(i)}-x_n}{\alpha})_{i=1}^{k+1}; (f_j)_{j=1}^{k}] \to 0.$ Therefore, by \Cref{volume property}, $D_k[(y_n^{(i)})_{i=1}^{k+1}; (f_j)_{j=1}^{k}] \to 0.$ \\
$(2) \Rightarrow (3) \Rightarrow (4)$: Obvious.\\
$(4) \Rightarrow (1)$: Let $(x_n^{(1)}), (x_n^{(2)}), \dots, (x_n^{(k+1)})$ be $(k+1)-$sequences in $S_X$ such that $\|\sum_{i=1}^{k+1}x_n^{(i)}\| \to k+1$ and $f_1, f_2, \dots, f_{k}\in S_{X^*}.$ For every $n \in \mathbb{N},$ define  $y_n= \frac{1}{k+1}\sum_{i=1}^{k+1}x_n^{(i)}.$ By Hahn-Banach theorem, for every $n \in \mathbb{N}$ there exists $g_n \in S_{X^*}$ such that $g_n(y_n)= \|y_n\|.$ Let $1 \leq i \leq k+1$. Observe that $g_n(y_n) \to 1$ and $g_n(x_n^{(i)}) \to 1$. Now, define $H_n= ker(g_n),$ $\beta_n= d(y_n,H_n)$ and $z_n^{(i)}= y_n-x_n^{(i)}-g_n(y_n-x_n^{(i)})\frac{y_n}{\|y_n\|}$ for all $n \in \mathbb{N}.$ Clearly $H_n$ is proximinal on $X$ for all $n \in \mathbb{N}$ and  $\beta_n \to 1.$ Note that $z_n^{(i)} \in H_n$ and $\frac{\alpha y_n}{\beta_n} \in H_n^{\alpha}$ for all  $n \in \mathbb{N}.$ Since
\[
	d(H_n, H_n^{\alpha}) \leq \left\Vert \frac{\alpha z_n^{(i)}}{\beta_n}- \frac{\alpha y_n}{\beta_n} \right\Vert\\
	 = \frac{\alpha}{\beta_n} \left\Vert x_n^{(i)}+ g_n(y_n-x_n^{(i)})\frac{y_n}{\|y_n\|}\right\Vert\\
	 \leq \frac{\alpha}{\beta_n} \left(\|x_n^{(i)}\|+ |g_n(y_n-x_n^{(i)})| \right),
\]
 we have $\| \frac{\alpha z_n^{(i)}}{\beta_n}- \frac{\alpha y_n}{\beta_n} \| \to \alpha.$ Thus, by $(4),$ $D_k[(\frac{\alpha}{\beta_n}z_n^{(i)})_{i=1}^{k+1}; (f_j)_{j=1}^{k}] \to 0.$ Further, using \Cref{volume property} and  \Cref{lem samir}, we have $D_k[(x_n^{(i)})_{i=1}^{k+1}; (f_j)_{j=1}^{k}] \to 0.$ Hence, $X$ is $k-$WUR. \\
$(2) \Rightarrow (5)$: Obvious.\\
$(5) \Rightarrow (1)$: Suppose there exist $\epsilon>0,$ $g_1, g_2, \dots, g_{k} \in S_{X^*}$ and  $(k+1)-$sequences  $(x_n^{(1)}), (x_n^{(2)}),$ $\dots,(x_n^{(k+1)})$  in $S_X$ such that $\|\sum_{i=1}^{k+1}x_n^{(i)}\| \to k+1,$ but $|D_k[(x_n^{(i)})_{i=1}^{k+1}; (g_j)_{j=1}^{k}]|> \epsilon$ for all $n \in \mathbb{N}.$ Now for every $n \in \mathbb{N},$ define $F_n= span\{x_n^{(i)}- x_n^{(k+1)}: 1 \leq i \leq k\}.$ Using  \Cref{volume property}, observe that $F_n$ is a $k-$dimensional subspace of $X$ and  hence it is proximinal on $X.$ Thus, for every $n \in \mathbb{N},$ there exist  $\lambda_n^{(1)}, \lambda_n^{(2)}, \dots, \lambda_n^{(k)} \in \mathbb{R}$ such that  $\|x_n^{(k+1)}+\sum_{i=1}^{k}\lambda_n^{(i)}(x_n^{(i)}- x_n^{(k+1)})\| = d(x_n^{(k+1)}, F_n).$ Denote $d(x_n^{(k+1)}, F_n)= \beta_n$ for all $n \in \mathbb{N}.$ Using \cite[Lemma 2.3]{VeVe2018}, we have $\beta_n \to 1.$ Note that  $d(\frac{\alpha}{\beta_n}x_n^{(k+1)}, F_n)=\alpha,$ $\frac{\alpha}{\beta_n}(x_n^{(k+1)}- x_n^{(i)}) \in F_n$ and $\|\frac{\alpha}{\beta_n}(x_n^{(k+1)}-x_n^{(i)})- \frac{\alpha}{\beta_n}x_n^{(k+1)}\| \to \alpha$ for all $1 \leq i \leq k+1.$ Therefore, our assumption leads to $D_k[(\frac{\alpha}{\beta_n}(x_n^{(k+1)}-x_n^{(i)}))_{i=1}^{k+1}; (g_j)_{j=1}^{k}] \to 0$. Thus from \Cref{remark vol} and \Cref{lem samir},  $D_k[(x_n^{(i)})_{i=1}^{k+1}; (g_j)_{j=1}^{k}] \to 0,$  which is a contradiction. Hence the proof.
\end{proof}
We remark that \Cref{thrm WUR uc,thrm k-WUR B_X,thrm kWUR equi} are generalizations of \cite[Theorems 4.5, 4.6 and 4.15]{GaTh2022}.

		In the next two results, we present a necessary and a sufficient condition for a space to be $k-$WLUR in terms of $k-$weakly strongly Chebyshevness.

\begin{proposition}\label{prop kWLUR prox convex}
		If $X$ is a $k-$WLUR space, then every proximinal convex subset of $X$ is $k-$$w$SCh on $X.$
	\end{proposition}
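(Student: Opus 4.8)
The plan is to verify the sequential description of $k$-$w$SCh given in \Cref{prop kwuch}$(1)$. Fixing a proximinal convex set $C$ and a point $x\in X$, I would take arbitrary $(k+1)$-sequences $(x_n^{(1)}),\dots,(x_n^{(k+1)})$ in $C$ with $\|x_n^{(i)}-x\|\to d(x,C)$ for every $i$, and show $D_k[(x_n^{(i)})_{i=1}^{k+1};(f_j)_{j=1}^{k}]\to 0$ for all $f_1,\dots,f_k\in S_{X^*}$. The case $x\in C$ is immediate: then $d(x,C)=0$ forces $x_n^{(i)}-x\to 0$, so by the translation invariance in \Cref{volume property} the determinant equals $D_k[(x_n^{(i)}-x)_{i=1}^{k+1};(f_j)_{j=1}^{k}]$, which tends to $0$. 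Hence I would assume $d:=d(x,C)>0$ and fix a best approximation $p\in P_C(x)$, so that $x_0:=\tfrac{x-p}{d}\in S_X$.

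The core idea is to reduce the full $(k+1)$-point determinant to ``star'' determinants centred at $x_0$, which $k$-WLUR at $x_0$ controls. Writing $w_n^{(i)}:=\tfrac{x-x_n^{(i)}}{d}$, I have $\|w_n^{(i)}\|\to 1$, and by the translation and scaling identities in \Cref{volume property}, $|D_k[(x_n^{(i)})_{i=1}^{k+1};(f_j)_{j=1}^{k}]|=d^{k}\,|D_k[(w_n^{(i)})_{i=1}^{k+1};(f_j)_{j=1}^{k}]|$; so it suffices to prove $D_k[(w_n^{(i)})_{i=1}^{k+1};(f_j)_{j=1}^{k}]\to 0$. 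For each $k$-element subset $\alpha\subseteq\{1,\dots,k+1\}$, convexity of $C$ gives $\tfrac{1}{k+1}\big(p+\sum_{i\in\alpha}x_n^{(i)}\big)\in C$, whence $\|x-\tfrac{1}{k+1}(p+\sum_{i\in\alpha}x_n^{(i)})\|\ge d$; rewriting the left side as $\tfrac{d}{k+1}\|x_0+\sum_{i\in\alpha}w_n^{(i)}\|$ yields $\tfrac{1}{k+1}\|x_0+\sum_{i\in\alpha}w_n^{(i)}\|\ge 1$, while the triangle inequality together with $\|w_n^{(i)}\|\to 1$ gives the matching upper bound in the limit. Thus $\tfrac{1}{k+1}\|x_0+\sum_{i\in\alpha}w_n^{(i)}\|\to 1$ for every such $\alpha$.

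Now I would invoke $k$-WLUR of $X$ at $x_0$ in its sequential form (the corollary following \Cref{def k-wur}, in the version allowing $\|w_n^{(\alpha_i)}\|\to 1$) to conclude $D_k[x_0,(w_n^{(\alpha_i)})_{i=1}^{k};(f_j)_{j=1}^{k}]\to 0$ for every $k$-subset $\alpha$ and every $f_1,\dots,f_k\in S_{X^*}$. Finally, the determinant identity \cite[Lemma 2]{Suya2000} already used in the proof of \Cref{prop k implies k+1} bounds the target by these star determinants, $|D_k[(w_n^{(i)})_{i=1}^{k+1};(f_j)_{j=1}^{k}]|\le\sum_{\alpha\in\mathcal{S}_{k+1}(k)}|D_k[x_0,(w_n^{(\alpha_i)})_{i=1}^{k};(f_j)_{j=1}^{k}]|$, so the right-hand side tends to $0$; the scaling reduction above then gives $D_k[(x_n^{(i)})_{i=1}^{k+1};(f_j)_{j=1}^{k}]\to 0$, and \Cref{prop kwuch}$(1)$ finishes the proof.

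I expect the main obstacle to be the middle step: recognising that the fixed best approximation $p$ must be averaged together with only $k$ of the moving points, so that convexity produces precisely the star configuration $x_0,w_n^{(\alpha_i)}$ matching the $k$-WLUR hypothesis, rather than the symmetric $(k+1)$-point average (which would only engage $k$-WUR). Once this is set up, verifying $\tfrac{1}{k+1}\|x_0+\sum_{i\in\alpha}w_n^{(i)}\|\to 1$ and applying the determinant identity are routine.
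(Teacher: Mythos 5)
Your proof is correct and follows essentially the same route as the paper's: after fixing a best approximation (the paper normalizes to $x=0$ and $d(0,C)=1$ via \Cref{remark vol}, while you carry out the equivalent affine change of variables explicitly, including the trivial case $d(x,C)=0$ and the scaling factor $d^k$), both arguments average the best approximation with $k$ of the $k+1$ moving points to obtain $\tfrac{1}{k+1}\|x_0+\sum_{i\in\alpha}w_n^{(i)}\|\to 1$, invoke $k$-WLUR at the star centre in its sequential form, and conclude with the determinant inequality of \cite[Lemma 2]{Suya2000}. No gaps; the differences are purely cosmetic.
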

	\begin{proof}
		In view of \Cref{remark vol}, it is enough to prove that every proximinal convex subset  $C$ of $X$ with $d(0, C)=1$ is $k-$$w$SCh at $0.$ Let $(x_n^{(1)}), (x_n^{(2)}),\ldots,(x_n^{(k+1)})$ be $(k+1)-$sequences in $C$ such that $\|x_n^{(i)}\| \xrightarrow{} 1$ for all $1\leq i \leq k+1$ and $f_1, f_2, \dots, f_k \in S_{X^*}.$ Choose $y\in P_C(0)$ and observe that $y\in S_X.$ Since $C$ is convex,   for any $\alpha \in \mathcal{S}_{k+1}(k)$, we have
		\[ 1= d(0, C) \leq \frac{1}{k+1}\left\|y+\sum_{i=1}^k x_n^{(\alpha_i)}\right\| \leq \frac{1}{k+1}(\|y\|+\sum_{i=1}^k \|x_n^{(\alpha_i)}\|)\]
		and hence $\|y+\sum_{i=1}^k x_n^{(\alpha_i)}\| \xrightarrow{} k+1.$ Thus, by assumption, we have $D_k[y, (x_n^{(\alpha_i)})_{i=1}^k; (f_j)_{j=1}^k] \xrightarrow{} 0$ for any $\alpha \in \mathcal{S}_{k+1}(k).$ Since, by \cite[Lemma 2]{Suya2000},
		\[ |D_k[(x_n^{(i)})_{i=1}^{k+1}; (f_j)_{j=1}^k]| \leq \sum_{\alpha \in \mathcal{S}_{k+1}(k) } |D_k[y, (x_n^{(\alpha_i)})_{i=1}^k; (f_j)_{j=1}^k]|,\]
	    we have $D_k[(x_n^{(i)})_{i=1}^{k+1}; (f_j)_{j=1}^k] \xrightarrow{} 0.$
	\end{proof}
We remark that the converse of \Cref{prop kWLUR prox convex} not necessarily true (see, \Cref{eg converse Chev2}).
		\begin{theorem}\label{coro WLUR S_X}
			Consider the following statements.
			\begin{enumerate}
				\item $X$ is $k-$WLUR.
				\item $S_X$ is $k-$$w$SCh on $rS_X$ for some $r \in (0,1).$
				\item $(S_X, C)$ has property $k-$$w$UC, whenever $C$ is a non-empty boundedly compact subset of $X$ with $d(0,C)>0.$
			\end{enumerate}
			Then $(1) \Leftarrow (2) \Leftrightarrow (3).$
		\end{theorem}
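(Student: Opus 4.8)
The plan is to prove the equivalence $(2)\Leftrightarrow(3)$ first and then the single arrow $(2)\Rightarrow(1)$, working throughout with the sequential reformulations of \Cref{prop kwuch} and \Cref{def property kwuc}. The preliminary move, used in every direction, is to strengthen the hypothesis of $(2)$: since $S_X$ is $k$-$w$SCh on $rS_X$ for one $r\in(0,1)$, \Cref{thrm wUSCh}(3) promotes this to $k$-$w$SCh of $S_X$ on $tS_X$ for \emph{every} $t\in(0,\infty)$, and I keep this ``global on all spheres'' form in hand.

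For $(3)\Rightarrow(2)$ I would feed singletons into $(3)$. Fix $r\in(0,1)$ and $z\in rS_X$; the set $C=\{z\}$ is trivially boundedly compact with $d(0,C)=r>0$, and $S_X$ is proximinal at $z$ with $d(z,S_X)=1-\|z\|=1-r$, attained at $z/\|z\|$. Property $k$-$w$UC of $(S_X,\{z\})$ then reads exactly as: for $(k+1)$-sequences $(w_n^{(i)})$ in $S_X$ with $\|w_n^{(i)}-z\|\to d(S_X,z)$ one has $D_k[(w_n^{(i)})_{i=1}^{k+1};(f_j)_{j=1}^{k}]\to 0$, which by \Cref{prop kwuch}(1) is precisely $k$-$w$SCh of $S_X$ at $z$. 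Since $z\in rS_X$ was arbitrary, this gives $(2)$.

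For $(2)\Rightarrow(3)$, let $C$ be boundedly compact with $d(0,C)>0$, and take $(k+1)$-sequences $(w_n^{(i)})$ in $S_X$ and $(y_n)$ in $C$ with $\|w_n^{(i)}-y_n\|\to d(S_X,C)$. Here bounded compactness converts the moving target $y_n$ into a fixed one: $(y_n)$ is bounded (as $\|y_n\|\le\|y_n-w_n^{(1)}\|+1$ stays bounded), so a subsequence satisfies $y_{n_m}\to y\in C$, whence $\|w_{n_m}^{(i)}-y\|\to d(S_X,C)$; comparing with $d(S_X,y)\ge d(S_X,C)$ forces $d(S_X,y)=d(S_X,C)$ and $y\neq 0$. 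Writing $t=\|y\|>0$ and invoking $k$-$w$SCh of $S_X$ at $y\in tS_X$ (from the upgraded $(2)$) gives $D_k[(w_{n_m}^{(i)})_{i=1}^{k+1};(f_j)_{j=1}^{k}]\to 0$. A routine ``every subsequence has a further subsequence tending to $0$'' argument then upgrades this to convergence of the full sequence, so $(S_X,C)$ has property $k$-$w$UC. This is the technically cleanest direction, and it is the one in which bounded compactness does the essential work of emulating uniformity.

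Finally, for $(2)\Rightarrow(1)$ I would fix $x\in S_X$ and $(x_n^{(i)})_{i=1}^{k}$ in $S_X$ with $\frac{1}{k+1}\|x+\sum_{i=1}^{k}x_n^{(i)}\|\to 1$, set $y_n=x+\sum_{i=1}^{k}x_n^{(i)}$, and form the barycentric center $z_n=(k+1)y_n/\|y_n\|\in(k+1)S_X$. The estimate from the $(4)\Rightarrow(1)$ step of \Cref{thrm WUR uc} applies verbatim, since each of $y_n-x$ and $y_n-x_n^{(i)}$ is a sum of $k$ unit vectors: it yields $\|z_n-x\|\to k$ and $\|z_n-x_n^{(i)}\|\to k=d(S_X,(k+1)S_X)$, so $x,x_n^{(1)},\dots,x_n^{(k)}$ are simultaneous asymptotic best approximations of $z_n$ from $S_X$, and once the approximation property is applied at these centers, \Cref{remark vol} and \Cref{lem samir} would convert the conclusion into $D_k[x,(x_n^{(i)})_{i=1}^{k};(f_j)_{j=1}^{k}]\to 0$. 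The main obstacle is exactly this application: the center $z_n$ \emph{moves} along $(k+1)S_X$, whereas $(2)$ supplies only pointwise $k$-$w$SCh, i.e.\ a modulus $\delta$ depending on the center, so a naive pointwise invocation is not licensed (this is precisely why, unlike the uniform case, \Cref{kwusch kwuc} cannot promote pointwise $w$SCh on the sphere to property $k$-$w$UC of $(S_X,(k+1)S_X)$). I expect to overcome this by arguing contrapositively and passing to a subsequence along which the finitely many scalars $f_j(x_n^{(i)})$ converge, thereby reducing the claim to the vanishing of a single limiting determinant, and then feeding a suitably extracted, relatively compact sequence of centers into the already-established form $(3)$; reconciling the pointwise nature of $k$-$w$SCh with the moving barycenter — equivalently, securing enough compactness of the center sequence — is the crux of the whole argument.
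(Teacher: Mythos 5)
Your handling of $(2)\Leftrightarrow(3)$ is correct and coincides with the paper's: the singleton argument for $(3)\Rightarrow(2)$ is exactly what the paper means when it says this implication follows from \Cref{def k-SCh,def property kwuc}, and your subsequence extraction for $(2)\Rightarrow(3)$ is precisely an unfolding of \Cref{rem boundely compact}, which the paper cites (after upgrading $(2)$ via \Cref{thrm wUSCh}, just as you do; note also that a boundedly compact set is closed, so your limit $y$ indeed lies in $C$ and $\|y\|\geq d(0,C)>0$, which your sketch uses implicitly).

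The genuine gap is in $(2)\Rightarrow(1)$, and your own diagnosis of the crux is accurate but your proposed repair does not work. The centers $z_n=(k+1)y_n/\|y_n\|$ range over $(k+1)S_X$, which is not boundedly compact when $\dim X=\infty$, so there is no reason for $(z_n)$ to have a norm-convergent subsequence, and statement $(3)$ — whose hypothesis requires $C$ boundedly compact — cannot be fed this sequence; moreover, passing to a subsequence along which the scalars $f_j(x_n^{(i)})$ converge gives convergence of determinant entries but no norm compactness of centers, so the pointwise modulus $\delta(\epsilon,x,(f_j))$ still cannot be invoked. The paper's proof removes the obstruction by making the \emph{approximants} move instead of the center: upgrade $(2)$ by \Cref{thrm wUSCh} to $k$-$w$SCh of $S_X$ on $\frac{1}{2}S_X$ and apply it at the single fixed point $\frac{1}{2}x$. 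From $\|x+\sum_{i=1}^{k}x_n^{(i)}\|\to k+1$ one gets $\|x+x_n^{(i)}\|\to 2$ for each $i$, so the normalized midpoints $u_n^{(i)}=\frac{x+x_n^{(i)}}{\|x+x_n^{(i)}\|}\in S_X$ satisfy
\[
\left\Vert u_n^{(i)}-\tfrac{1}{2}x\right\Vert \leq \left\vert \tfrac{1}{\|x_n^{(i)}+x\|}-\tfrac{1}{2}\right\vert+\tfrac{1}{\|x_n^{(i)}+x\|} \to \tfrac{1}{2}=d\left(\tfrac{1}{2}x,S_X\right),
\]
while $x$ itself is a nearest point to $\frac{1}{2}x$ in $S_X$. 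Hence the $(k+1)$ points $x,u_n^{(1)},\dots,u_n^{(k)}$ are asymptotically nearest to one fixed center, $k$-$w$SCh at $\frac{1}{2}x$ yields $D_k[x,(u_n^{(i)})_{i=1}^{k};(f_j)_{j=1}^{k}]\to 0$, and \Cref{remark vol} together with \Cref{lem samir} strips off the normalizations to give $D_k[x,(x_n^{(i)})_{i=1}^{k};(f_j)_{j=1}^{k}]\to 0$. Without this fixed-center midpoint device (or some substitute), your $(2)\Rightarrow(1)$ remains unproved.
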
		
		\begin{proof}
			$(2) \Rightarrow (1)$: 	From the assumption and  \Cref{thrm wUSCh}, we have $S_X$ is $k-$$w$SCh on $\frac{1}{2}S_X.$ Let $x \in S_X,$ $(x_n^{(1)}),(x_n^{(2)}), \dots, (x_n^{(k)})$ be $(k)-$sequences in $S_X$ such that $\Vert x+\sum_{i=1}^{k}x_n^{(i)} \Vert \to k+1$ and $f_1,f_2,\dots,f_k \in S_{X^*}.$  Observe that for any $1 \leq i \leq k,$ we have $\|x_n^{(i)}+x\| \to 2$ and
			\[
			d(\frac{1}{2}x, S_X) \leq \left \Vert \frac{x_n^{(i)}+x}{{\|x_n^{(i)}+x}\|}-\frac{1}{2}x \right \Vert \leq \left \vert \frac{1}{\|x_n^{(i)}+x\|}-\frac{1}{2} \right \vert + \frac{1}{\|x_n^{(i)}+x\|},
			\]
			which implies $\left \Vert \frac{x_n^{(i)}+x}{\|x_n^{(i)}+x\|}-\frac{1}{2}x \right \Vert \to d(\frac{1}{2}x, S_X).$ Thus, we have
			$
			D_k \left[x, \left(\frac{x_n^{(i)}+x}{\|x_n^{(i)}+x\|}\right)_{i=1}^{k};(f_j)_{j=1}^{k}\right] \to 0.
			$
			Therefore, by \Cref{remark vol} and  \Cref{lem samir}, it follows that	$D_k [x, (x_n^{(i)})_{i=1}^{k};(f_j)_{j=1}^{k}] \to 0.$ \\
			$(2) \Rightarrow (3)$: This implication follows from \Cref{rem boundely compact}.\\
			$(3) \Rightarrow (2)$: This implication follows from the \Cref{def k-SCh,def property kwuc}.
		\end{proof}
	We note that for the case $k=1$,  \Cref{prop kWLUR prox convex} and $(2) \Rightarrow (1)$ of \Cref{coro WLUR S_X} are proved in  \cite[Propositon 4.7]{GaTh2022} and \cite[Theorem 3.12]{DuSh2018} respectively.
	In the following example, we  see that the 	other implications of   \Cref{thrm WUR UC SX} and \Cref{coro WLUR S_X} need not be true in general for any $k \geq 2$. For instance for any $k \geq 2$, consider the $k-$WUR space $X=(\mathbb{R}^{k}, \|\cdot\|_{\infty})$ and $x=(r,r, \dots, r)$  for some $0 < r<1.$ It is easy to see that, $S_X$ is not $k-$Chebyshev at $x,$ hence $S_X$ is not $k-$$w$SCh on $rS_X.$ Further, by \Cref{kwusch kwuc}, $(S_X, rS_X)$ does not have property $k-$$w$UC.

	 In light of the \Cref{def KWLUR} and  \Cref{thrm WUR MLUR}, it is natural to ask whether the local version of \Cref{thrm k-WUR B_X} holds. The following result provides a positive answer to this question. We conclude this section with some characterizations of the $k-$WMLUR spaces.
	\begin{theorem}\label{thrm MLUR B_X}
		Let $r>1.$ Then the following statements are equivalent.
		\begin{enumerate}
			\item $X$ is $k-$WMLUR.
			\item $B_X$ is $k-$$w$SCh on $X.$
			\item $B_X$ is $k-$$w$SCh on $rS_X.$
		\end{enumerate}
	\end{theorem}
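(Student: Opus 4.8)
The plan is to prove the equivalences in a cycle $(1)\Rightarrow(2)\Rightarrow(3)\Rightarrow(1)$, modeling the argument closely on the proof of \Cref{thrm k-WUR B_X}, since \Cref{thrm MLUR B_X} is precisely its local ($k$-WMLUR) analogue. The implication $(2)\Rightarrow(3)$ is trivial, as $rS_X\subseteq X$. Throughout I would use the sequential characterization of $k$-$w$SCh from \Cref{prop kwuch}(1): $B_X$ is $k$-$w$SCh at $x$ iff for any $(k+1)$-sequences $(x_n^{(i)})$ in $B_X$ with $\|x_n^{(i)}-x\|\to d(x,B_X)$ for all $1\le i\le k+1$, one has $D_k[(x_n^{(i)})_{i=1}^{k+1};(f_j)_{j=1}^k]\to 0$ for all $f_1,\dots,f_k\in S_{X^*}$.

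For $(1)\Rightarrow(2)$, it suffices to verify $k$-$w$SCh at each $x\in X\setminus B_X$, since for $x\in B_X$ the projection is trivial. Fix such $x$ and take $(k+1)$-sequences $(x_n^{(i)})$ in $B_X$ with $\|x_n^{(i)}-x\|-d(x,B_X)\to 0$; here $d(x,B_X)=\|x\|-1$. The key computation mirrors the one in \Cref{thrm k-WUR B_X}: I would show $\frac{1}{k+1}\|\sum_{i=1}^{k+1}x_n^{(i)}\|\to 1$ by the same chain of inequalities used there, so that the normalized vectors $x_n^{(i)}/\|x_n^{(i)}\|$ (or the $x_n^{(i)}$ themselves, after invoking \Cref{lem samir}) form an averaging sequence. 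The difference from the uniform case is that $k$-WMLUR involves a \emph{fixed centre}: the hypothesis $\|(k+1)z-\sum x_n^{(i)}\|\to 0$ in \Cref{def KWLUR} requires a common limiting direction $z\in S_X$. I would therefore set $z_n=\frac{1}{k+1}\sum_{i=1}^{k+1}x_n^{(i)}$ and argue that $\|x_n^{(i)}-y_n\|\to d(x,B_X)$ forces all the $x_n^{(i)}$ to cluster near the common point $\frac{x}{\|x\|}$, producing the sequence $x_n^{(k+2)}=z_n/\|z_n\|$ needed to apply the characterization $(2)$ or $(3)$ of \Cref{thrm WUR MLUR} in its $k$-WMLUR form; then $D_k[(x_n^{(i)})_{i=1}^{k+1};(f_j)_{j=1}^k]\to 0$ follows.

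For $(3)\Rightarrow(1)$, I would start from $(k+1)$-sequences $(x_n^{(i)})$ in $S_X$ and $x\in S_X$ with $\|(k+1)x-\sum_{i=1}^{k+1}x_n^{(i)}\|\to 0$, as in \Cref{def KWLUR}, and manufacture a point $y_n\in rS_X$ together with best-approximation data in $B_X$ so that $\|x_n^{(i)}-y_n\|\to d(y_n,B_X)$. The natural choice is $y_n=r\,\tfrac{1}{k+1}\sum_{i=1}^{k+1}x_n^{(i)}$ (or $y_n=rx$), which lies on $rS_X$ in the limit; the estimate $\|x_n^{(i)}-y_n\|-(r-1)\to 0$ should follow from $\frac{1}{k+1}\|\sum x_n^{(i)}\|\to 1$, exactly as the displayed inequalities in \Cref{thrm k-WUR B_X} establish convergence to $d(S_X,(k+1)S_X)$. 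Applying the $k$-$w$SCh hypothesis at $y_n$ then yields $D_k[(x_n^{(i)})_{i=1}^{k+1};(f_j)_{j=1}^k]\to 0$, which is $k$-WMLUR.

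The main obstacle I anticipate is the $(3)\Rightarrow(1)$ direction, specifically controlling the \emph{moving centre} $y_n\in rS_X$ in the local setting: unlike the uniform case of \Cref{thrm k-WUR B_X}, where property $k$-$w$UC on $rS_X$ transfers freely along the scaling results of \Cref{thrm wUSCh}, here the fixed direction $x$ in \Cref{def KWLUR} must be reconciled with the varying foot-points $y_n$, and I must ensure the approximating sequences stay within the $\delta$-neighbourhoods $P_{B_X}(y_n,\delta)$ uniformly enough to invoke $k$-$w$SCh at each $y_n$. I would handle this by passing through the sequential formulation and using \Cref{lem samir} to absorb the discrepancy between $y_n$ and its limiting direction, together with \Cref{thrm wUSCh} to move between the radius $r>1$ and any convenient radius, so that the fixed-centre hypothesis of $k$-WMLUR is recovered.
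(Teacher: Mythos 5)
Your cycle $(1)\Rightarrow(2)\Rightarrow(3)\Rightarrow(1)$ matches the paper's architecture, and $(2)\Rightarrow(3)$ is indeed trivial, but the $(1)\Rightarrow(2)$ direction as you describe it has a genuine gap. You propose to verify the premise of \Cref{def KWLUR} by arguing that the minimizing vectors $x_n^{(i)}$ ``cluster near the common point $x/\|x\|$,'' so that $x_n^{(k+2)}=z_n/\|z_n\|$ (with $z_n$ the average) can serve as the centre. This clustering claim is false even under hypothesis $(1)$: in the space of \Cref{eg kWUR not KMLUR}, which is WUR (hence $k$-WMLUR), $B_X$ is \emph{not} approximatively compact, so there is a minimizing sequence $(m_n)$ in $B_X$ for some $x$ with no norm-convergent subsequence; taking $x_n^{(i)}=m_n$ for all $i$, neither the individual vectors nor their average converge to $x/\|x\|$, so the premise $\|(k+1)z-\sum_{i=1}^{k+1}x_n^{(i)}\|\to 0$ cannot be established for any fixed $z\in S_X$. (Moreover, \Cref{thrm WUR MLUR} characterizes $k$-WUR; its ``$k$-WMLUR form'' is not available and is essentially what is being proved.) The paper's actual device circumvents this: after reducing via \Cref{thrm wUSCh} to the sphere $\frac{k+1}{k}S_X$, with $x\in S_X$ and centre $\frac{k+1}{k}x$, it defines for each $\alpha\in\mathcal{S}_{k+1}(k)$ the auxiliary vector $z_n=(k+1)x-\sum_{i=1}^{k}x_n^{(\alpha_i)}$, so that the sum of the $k$ chosen minimizers and $z_n$ equals $(k+1)x$ \emph{exactly} while $\|z_n\|\to 1$; the $k$-WMLUR definition then yields $D_k[z_n,(x_n^{(\alpha_i)})_{i=1}^{k};(f_j)_{j=1}^{k}]\to 0$, hence $D_k[x,(x_n^{(\alpha_i)})_{i=1}^{k};(f_j)_{j=1}^{k}]\to 0$ by homogeneity, and summing over all $\alpha$ via \cite[Lemma 2]{Suya2000} controls $|D_k[(x_n^{(i)})_{i=1}^{k+1};(f_j)_{j=1}^{k}]|$. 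This exact-centre construction is the missing idea, and without it (or an equivalent) your $(1)\Rightarrow(2)$ does not go through.

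In $(3)\Rightarrow(1)$ your quantitative step is also wrong as stated, though patchable. With $y_n=rx$ the claimed estimate $\|x_n^{(i)}-y_n\|-(r-1)\to 0$ fails for $1<r<k+1$: from $\|(k+1)x-\sum_{i=1}^{k+1}x_n^{(i)}\|\to 0$ one only gets $\|x_n^{(i)}-rx\|\le k+(k+1-r)+o(1)$, and concretely for $k=1$ in $(\mathbb{R}^2,\|\cdot\|_\infty)$ with $x=(1,0)$, $x_n^{(1)}=(1,t)$, $x_n^{(2)}=(1,-t)$ and $t\in(r-1,1]$, the sum equals $2x$ exactly yet $\|x_n^{(i)}-rx\|_\infty=t>r-1$. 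The moving centre $y_n=r\,\frac{1}{k+1}\sum_i x_n^{(i)}$ does not help, since it converges in norm to $rx$ anyway. The correct fix is the paper's: first use \Cref{thrm wUSCh} to transfer $k$-$w$SCh from $rS_X$ to $(k+1)S_X$, then apply it at the single fixed point $(k+1)x$, where $\|x_n^{(i)}-(k+1)x\|\to k=d((k+1)x,B_X)$ follows automatically from the triangle inequality; your anticipated ``moving centre'' obstacle then simply does not arise. Since you do gesture at this rescaling, this direction is recoverable, but the proposal as written is incorrect at both the clustering claim and the choice of radius.
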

		\begin{proof}
			$(1) \Rightarrow (2)$: In view of the \Cref{thrm wUSCh}, it is enough to prove that $B_X$ is $k-$$w$SCh on $\frac{k+1}{k}S_X.$ Let $x \in S_X,$  $(x_n^{(1)}), (x_n^{(2)}), \dots, (x_n^{(k+1)})$ be $(k+1)-$sequences in $B_X$ such that $\|x_n^{(i)}- \frac{k+1}{k}x\| \to \frac{1}{k}$ and  $f_1, f_2, \dots, f_k \in S_{X^*}.$ We need to  show that $D_k[(x_n^{(i)})_{i=1}^{k+1}; (f_j)_{j=1}^{k}] \to 0.$ Observe that for any $1 \leq i \leq k+1,$
			\[
			\frac{k+1}{k}-1 \leq  \frac{k+1}{k}\|x\| - \|x_n^{(i)}\| \leq \left \Vert x_n^{(i)}- \frac{k+1}{k}x\right \Vert
			\]
			holds, which implies $\|x_n^{(i)}\| \to 1.$ Now, we claim that  $D_{k}[x,(x_n^{(\alpha_i)})_{i=1}^{k}; (f_j)_{j=1}^{k}] \to 0$ for all $\alpha \in \mathcal{S}_{k+1}(k).$ Let $\alpha \in \mathcal{S}_{k+1}(k).$ For every $n \in \mathbb{N},$ define $z_n=(k+1)x-\sum_{i=1}^{k}  x_n^{(\alpha_i)}.$ Since
			\[
			d\left(B_X, \frac{k+1}{k}S_X\right)\leq \left\Vert\frac{1}{k}\sum\limits_{i=1}^{k}x_n^{(\alpha_i)}- \frac{k+1}{k}x\right\Vert \leq \frac{1}{k}\sum\limits_{i=1}^{k}\left\Vert x_n^{(\alpha_i)}- \frac{k+1}{k}x \right\Vert,
				\]
				we have $\|z_n\| \to 1.$ Observe that $\|(k+1)x-(\sum_{i=1}^{k}x_n^{(\alpha_i)}+z_n)\|=0$ for all $n \in \mathbb{N}.$ Hence, by $(1),$ we get $D_k[z_n, (x_n^{(\alpha_i)})_{i=1}^{k}; (f_j)_{j=1}^{k}] \to 0.$  Note that $|D_k[x,(x_n^{(\alpha_i)})_{i=1}^{k}; (f_j)_{j=1}^{k}]| = \frac{1}{k+1} |D_k[z_n,(x_n^{(\alpha_i)})_{i=1}^{k}; (f_j)_{j=1}^{k}]|$ for all $n \in \mathbb{N}.$ Thus, $D_k[x,(x_n^{(\alpha_i)})_{i=1}^{k}; (f_j)_{j=1}^{k}] \to 0.$  Hence the claim.\\
				Now, it follows from \cite[Lemma 2]{Suya2000} that
		\[
		|D_{k}[(x_n^{(i)})_{i=1}^{k+1}; (f_j)_{j=1}^{k}]| \leq \sum\limits_{\alpha \in \mathcal{S}_{k+1}(k)}|D_{k}[x,(x_n^{(\alpha_i)})_{i=1}^{k}; (f_j)_{j=1}^{k}],
		\]
		which implies 	$D_{k}[(x_n^{(i)})_{i=1}^{k+1}; (f_j)_{j=1}^{k}] \to 0.$ Hence, $B_X$ is $k-$$w$SCh on $\frac{k+1}{k}S_X.$\\
		$(2) \Rightarrow (3)$: Obvious.\\
		$(3) \Rightarrow (1)$: It follows from the assumption  and \Cref{thrm wUSCh} that $B_X$ is $k-$$w$SCh on $(k+1)S_X.$ Let $x \in S_X,$  $(x_n^{(1)}), (x_n^{(2)}), \dots, (x_n^{(k+1)})$ be $(k+1)-$sequences in $S_X$ such that $\|(k+1)x-\sum_{i=1}^{k+1} x_n^{(i)}\| \to 0$ and $f_1, f_2, \dots, f_k \in S_{X^*}.$ We need to show that  $D_k[(x_n^{(i)})_{i=1}^{k+1}; (f_j)_{j=1}^{k}] \to 0.$ Note that for any $1 \leq i \leq k+1,$ we have
		\[
		\begin{aligned}
			(k+1)\|x\|- \|x_n^{(i)}\| & \leq \|x_n^{(i)}-(k+1)x\|\\
			&\leq \left\Vert \sum\limits_{j=1}^{k+1}x_n^{(j)}-(k+1)x \right\Vert + \left\Vert \sum\limits_{j=1, j \neq i}^{k+1}x_n^{(j)}\right\Vert \\
			&\leq \left\Vert \sum\limits_{j=1}^{k+1}x_n^{(j)}-(k+1)x \right\Vert+k.
		\end{aligned}
		\]	
	Thus, $\|x_n^{(i)}-(k+1)x\| \to k.$ Since $B_X$ is $k-$$w$SCh on $(k+1)S_X,$ we get $D_k[(x_n^{(i)})_{i=1}^{k+1}; (f_j)_{j=1}^{k}] \to 0.$ Hence, $X$ is $k-$WMLUR.
		\end{proof}
	We remark that for the case $k=1,$ \Cref{thrm MLUR B_X} is proved in \cite[Theorem 2.6]{ZhLZ2015}. The following corollary is an immediate consequence of \Cref{rem boundely compact} and \Cref{thrm MLUR B_X}.
	\begin{corollary}
		The following statements are equivalent.
		\begin{enumerate}
			\item $X$ is $k-$WMLUR.
			\item  If $A$ is a closed ball in $X$ and $B$ is a non-empty boundedly compact subset of $X,$ then $(A,B)$ has property $k-$$w$UC.
		\end{enumerate}
	\end{corollary}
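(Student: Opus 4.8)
The plan is to prove the two implications separately, using \Cref{thrm MLUR B_X} to translate the geometric notion $k-$WMLUR into the best-approximation statement that $B_X$ is $k-$$w$SCh, and \Cref{rem boundely compact} to pass from $k-$$w$SCh to property $k-$$w$UC against a boundedly compact set.

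For $(1) \Rightarrow (2)$, I would first invoke \Cref{thrm MLUR B_X} to obtain that $B_X$ is $k-$$w$SCh on $X$. Given an arbitrary closed ball $A$, write $A = x_0 + \rho B_X$ for suitable $x_0 \in X$ and $\rho > 0$. Using the sequential characterization in \Cref{prop kwuch}, I would check that $A$ is $k-$$w$SCh at each point $x$ by transferring everything to $B_X$ via the affine map $b \mapsto x_0 + \rho b$: if $(y_n^{(i)})_{i=1}^{k+1}$ lie in $A$ with $\|y_n^{(i)}-x\| \to d(x,A)$, writing $y_n^{(i)} = x_0 + \rho b_n^{(i)}$ gives $\|b_n^{(i)} - (x-x_0)/\rho\| \to d((x-x_0)/\rho, B_X)$, while by the translation and homogeneity properties in \Cref{volume property}$(1)$--$(2)$ we have $D_k[(y_n^{(i)})_{i=1}^{k+1};(f_j)_{j=1}^{k}] = \rho^{k}\,D_k[(b_n^{(i)})_{i=1}^{k+1};(f_j)_{j=1}^{k}]$. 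Since $B_X$ is $k-$$w$SCh at $(x-x_0)/\rho$, the right-hand side tends to $0$, whence so does the left. Thus $A$ is $k-$$w$SCh on $X$, in particular on the given boundedly compact set $B$; as $A$ is bounded and $B$ is boundedly compact, \Cref{rem boundely compact} yields that $(A,B)$ has property $k-$$w$UC.

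For $(2) \Rightarrow (1)$, the key observation is that a singleton is boundedly compact. I would fix an arbitrary $x \in X$ and apply $(2)$ with $A = B_X$ (a closed ball) and $B = \{x\}$, obtaining that $(B_X,\{x\})$ has property $k-$$w$UC. Since $B_X$ is proximinal on $X$ and $d(B_X,\{x\}) = d(x,B_X)$, property $k-$$w$UC for this pair says exactly that for any $(k+1)-$sequences in $B_X$ with $\|x_n^{(i)}-x\| \to d(x,B_X)$ one has $D_k[(x_n^{(i)})_{i=1}^{k+1};(f_j)_{j=1}^{k}] \to 0$; by \Cref{prop kwuch} this is precisely $k-$$w$SCh of $B_X$ at $x$. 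As $x$ was arbitrary, $B_X$ is $k-$$w$SCh on $X$, and \Cref{thrm MLUR B_X} gives that $X$ is $k-$WMLUR.

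I expect the only mildly delicate point to be the reduction of a general closed ball to $B_X$ in $(1) \Rightarrow (2)$: one must verify that $k-$$w$SCh is invariant under translation and positive scaling, which follows cleanly from the determinant identities in \Cref{volume property} together with the correspondence of near-best approximations under the affine map $b \mapsto x_0 + \rho b$. Everything else is a direct application of the two cited results, the second direction being immediate once one notices that singletons may serve as the boundedly compact set.
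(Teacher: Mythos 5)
Your proof is correct and follows the paper's intended route: the paper states this corollary as an immediate consequence of \Cref{rem boundely compact} and \Cref{thrm MLUR B_X}, precisely the two results you combine, with the translation--scaling reduction of a general closed ball to $B_X$ (via \Cref{volume property} and \Cref{prop kwuch}) and the choice of a singleton as the boundedly compact set being exactly the details the paper leaves implicit. No gaps; nothing further is needed.
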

As specified immediately after \Cref{def KWLUR}, the following example illustrates that, in general, $k-$rotundity does not imply $k-$WMLUR.
\begin{example}\label{eg kWMLUR not kR}
	Let $k \in \mathbb{Z}^{+}.$ Consider the space $X$ as in \Cref{eg Ch not wSCh}. Note that $X$ is $k-$rotund, but not MLUR. From \Cref{eg Ch not wSCh}, it is clear that $B_X$ is not $k-$$w$SCh on $X.$ Therefore, by \Cref{thrm MLUR B_X}, $X$ is not $k-$WMLUR.
	\end{example}
\section{Stability of $k-$WUR, $k-$WLUR and $k-$WMLUR }	
In this section, we examine the stability of the notions $k-$WUR, $k-$WLUR and $k-$WMLUR. We begin with the inheritance nature of the notions $k-$WUR and $k-$WLUR by quotient spaces.

In view of \Cref{thrm quotiet k-rotund}, it is natural to ask whether a similar characterization holds for the notions $k-$WUR and $k-$WLUR. To answer this question, in the following result, we prove that the collection of all quotient spaces of a $k-$WUR space is uniformly $k-$WUR. Indeed the reverse implication also holds.

	\begin{theorem}\label{thrm WUR quot}
			Let $\alpha, \beta \in \mathbb{Z}^{+}$, $X$ be a Banach space satisfying  $dim(X) \geq k+2,$ $1 \leq \alpha \leq dim(X)-(k+1)$ and  $k+1 \leq \beta \leq  dim(X)-1.$ Then the following statements are equivalent.
		\begin{enumerate}
			\item $X$ is $k-$WUR.
			\item For every $\epsilon>0$ and $f_1, f_2, \dots, f_{k} \in S_{X^*},$ it follows that \begin{center}$\inf \{\delta^{k}_{X/M}(\epsilon, (f_j)_{j=1}^{k}): M \subseteq \cap_{j=1}^{k}ker(f_j) \}>0.$\end{center}
			\item For every $\epsilon>0$ and $f_1, f_2, \dots, f_{k} \in S_{X^*},$ it follows that \begin{center}
				$\inf \{\delta^{k}_{X/F}(\epsilon, (f_j)_{j=1}^{k}): F \subseteq \cap_{j=1}^{k}ker(f_j)$ with  $dim(F)= \alpha \}>0.$\end{center}
			\item  For every $\epsilon>0$ and $f_1, f_2, \dots, f_{k} \in S_{X^*},$ it follows that \begin{center}$\inf \{\delta^{k}_{X/Y}(\epsilon, (f_j)_{j=1}^{k}): Y\subseteq \cap_{j=1}^{k}ker(f_j)$ with $codim(Y)= \beta \}>0.$\end{center}
	\end{enumerate}
	\end{theorem}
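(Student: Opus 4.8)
The plan is to establish the cycle $(1)\Rightarrow(2)\Rightarrow(3)\Rightarrow(1)$ together with $(2)\Rightarrow(4)\Rightarrow(1)$. The implications $(2)\Rightarrow(3)$ and $(2)\Rightarrow(4)$ will be immediate: for a fixed $\epsilon$ and fixed $f_1,\dots,f_k$, the admissible subspaces in $(3)$ and $(4)$ form subcollections of those in $(2)$, so the infima in $(3)$ and $(4)$ are at least the infimum in $(2)$; the dimension hypotheses $1\le\alpha\le \dim(X)-(k+1)$ and $k+1\le\beta\le\dim(X)-1$ guarantee these subcollections are non-empty inside $\bigcap_{j=1}^k\ker(f_j)$, whose codimension is at most $k$. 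Throughout I will use two structural facts: whenever a closed subspace $M\subseteq\bigcap_j\ker(f_j)$, each $f_j$ induces a functional on $X/M$ of the same norm, so the $f_j$ are legitimate arguments for $\delta^k_{X/M}$; and by \Cref{remark vol}$(4)$ the determinant $D_k$ computed in $X/M$ against these $f_j$ equals $D_k$ computed in $X$ against any representatives.

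For $(1)\Rightarrow(2)$ I would argue by contradiction. If the infimum vanishes for some $\epsilon$ and some $f_1,\dots,f_k$, then there are closed subspaces $M_n\subseteq\bigcap_j\ker(f_j)$ and vectors $\bar x_n^{(1)},\dots,\bar x_n^{(k+1)}\in S_{X/M_n}$ with $|D_k[(\bar x_n^{(i)})_{i=1}^{k+1};(f_j)_{j=1}^k]|\ge\epsilon$ and $\frac{1}{k+1}\|\sum_i\bar x_n^{(i)}\|\to1$. Lifting each $\bar x_n^{(i)}$ to a representative $x_n^{(i)}\in X$ with $\|x_n^{(i)}\|\le 1+\tfrac1n$ forces $\|x_n^{(i)}\|\to1$; the determinant is unchanged by \Cref{remark vol}$(4)$, while $\|\sum_i\bar x_n^{(i)}\|\le\|\sum_i x_n^{(i)}\|\le\sum_i\|x_n^{(i)}\|$ squeezes $\frac{1}{k+1}\|\sum_i x_n^{(i)}\|\to1$. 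Then the sequential characterization \Cref{equkwur}$(4)$ yields $D_k[(x_n^{(i)})_{i=1}^{k+1};(f_j)_{j=1}^k]\to0$, contradicting the lower bound $\epsilon$.

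For $(3)\Rightarrow(1)$ and $(4)\Rightarrow(1)$ I would again take the contrapositive. If $X$ is not $k$-WUR, fix a witnessing $\epsilon_0$, functionals $f_1,\dots,f_k\in S_{X^*}$, and sequences $x_n^{(i)}\in S_X$ with $|D_k[(x_n^{(i)})_{i=1}^{k+1};(f_j)_{j=1}^k]|\ge\epsilon_0$ and $\frac{1}{k+1}\|\sum_i x_n^{(i)}\|\to1$. For each $n$ pick $g_n\in S_{X^*}$ with $g_n(\sum_i x_n^{(i)})=\|\sum_i x_n^{(i)}\|$ and choose the quotient subspace $F_n$ inside $\ker(g_n)\cap\bigcap_j\ker(f_j)$ --- a closed subspace of codimension at most $k+1$ --- of the required dimension $\alpha$ (for $(3)$) or codimension $\beta$ (for $(4)$), which the hypotheses on $\alpha,\beta$ permit. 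Because $F_n\subseteq\ker(g_n)$, the quotient norm preserves the sum, $\|\sum_i(x_n^{(i)}+F_n)\|=\|\sum_i x_n^{(i)}\|\to k+1$, which forces each $c_n^{(i)}\coloneqq\|x_n^{(i)}+F_n\|\to1$; normalizing gives $\hat x_n^{(i)}=(x_n^{(i)}+F_n)/c_n^{(i)}\in S_{X/F_n}$ with $\frac{1}{k+1}\|\sum_i\hat x_n^{(i)}\|\to1$. Representing $\hat x_n^{(i)}$ by $x_n^{(i)}/c_n^{(i)}$ in $X$, \Cref{remark vol}$(4)$ identifies the quotient determinant with $D_k[(x_n^{(i)}/c_n^{(i)})_{i=1}^{k+1};(f_j)_{j=1}^k]$, and \Cref{lem samir}$(1)$ (with scalars $1/c_n^{(i)}\to1$) gives $|D_k[(x_n^{(i)}/c_n^{(i)})_{i=1}^{k+1};(f_j)_{j=1}^k]-D_k[(x_n^{(i)})_{i=1}^{k+1};(f_j)_{j=1}^k]|\to0$, so the quotient determinant has absolute value at least $\epsilon_0/2$ for large $n$. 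Hence $\delta^k_{X/F_n}(\epsilon_0/2,(f_j)_{j=1}^k)\le 1-\frac{1}{k+1}\|\sum_i\hat x_n^{(i)}\|\to0$, contradicting $(3)$ (respectively $(4)$).

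The main obstacle is that the quotient spaces $X/F_n$ vary with $n$, so neither the normalization lemma \Cref{lem samir} nor the norm estimates apply directly in a single fixed space. I expect the decisive idea to be to transport every computation back to $X$: the supporting functional $g_n$ forces $F_n\subseteq\ker(g_n)$ and thereby pins the norm of the sum, while \Cref{remark vol}$(4)$ lets the determinants be read off from representatives in the fixed space $X$, where \Cref{lem samir} legitimately applies. I also note that, in contrast to the $(4)\Rightarrow(1)$ direction of \Cref{thrm quotiet k-rotund}, no reflexivity is needed here, precisely because statement $(4)$ imposes only a codimension condition and not proximinality on $Y$.
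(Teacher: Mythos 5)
Your proposal is correct and follows essentially the same route as the paper's proof: contradiction in each direction, lifting quotient vectors to near-norm-one representatives for $(1)\Rightarrow(2)$, and for $(3)\Rightarrow(1)$, $(4)\Rightarrow(1)$ using Hahn--Banach supporting functionals $g_n$ to pick $F_n$ (or $Y_n$) inside $\ker(g_n)\cap\bigcap_{j}\ker(f_j)$, transporting determinants back to $X$ via \Cref{remark vol}$(4)$ and handling the normalization with \Cref{lem samir}. Your minor variations (bounding representatives by $1+\tfrac{1}{n}$ rather than choosing $y_n^{(i)}\in M_n$ with $\|x_n^{(i)}-y_n^{(i)}\|\to 1$, and deducing $\|x_n^{(i)}+F_n\|\to 1$ by a squeeze on the sum instead of via $g_n(x_n^{(i)})\to 1$) are cosmetic.
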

	\begin{proof}
		$(1) \Rightarrow (2)$: Suppose there exist $\epsilon>0,$ $f_1, f_2, \dots, f_{k} \in S_{X^*}$, a sequence $(M_n)$ of  subspaces in  $X$ such that  $M_n \subseteq \cap_{j=1}^{k}ker(f_j)$ and  $ \delta^{k}_{X/M_n}(\epsilon, (f_j)_{j=1}^{k})< \frac{1}{n}$ for all $n \in \mathbb{N}.$ Then there exist $(k+1)-$sequences  $(x_n^{(1)}+M_n), (x_n^{(2)}+M_n), \dots, (x_n^{(k+1)}+M_n)$ with  $x_n^{(i)}+M_n \in S_{X/M_n}$ for all $n \in \mathbb{N},$ $1 \leq i \leq k+1$ such that $|D_k[(x_n^{(i)}+M_n)_{i=1}^{k+1}; (f_j)_{j=1}^{k}]| \geq \epsilon$ for all $n \in \mathbb{N}$, but $\|\sum_{i=1}^{k+1}(x_n^{(i)}+M_n)\| \to k+1.$   Since $d(x_n^{(i)}, M_n)=1,$ there exists $y_n^{(i)} \in M_n$ such that $ \|x_n^{(i)}-y_n^{(i)}\| \to 1$ for all $1 \leq i \leq k+1.$ Therefore, we have
		\[
		\left\Vert\sum_{i=1}^{k+1}x_n^{(i)}+M_n\right\Vert \leq \left \Vert \sum\limits_{i=1}^{k+1}x_n^{(i)}- \sum\limits_{i=1}^{k+1}y_n^{(i)} \right\Vert \leq \sum\limits_{i=1}^{k+1}\|x_n^{(i)}-y_n^{(i)}\| 		
		\]
		and hence $\|\sum_{i=1}^{k+1}(x_n^{(i)}-y_n^{(i)})\| \to k+1.$ By $(1),$ we get  $D_{k}[(x_n^{(i)}-y_n^{(i)})_{i=1}^{k+1}; (f_j)_{j=1}^{k}] \to 0.$ Thus, by \Cref{remark vol}, we have $D_k[(x_n^{(i)}+M_n)_{i=1}^{k+1}; (f_j)_{j=1}^{k}] \to 0.$ This is a contradiction. \\
		$(2) \Rightarrow (3)$: Obvious. \\
		$(3) \Rightarrow (1)$: Suppose there exist $\epsilon>0,$ $f_1, f_2, \dots, f_{k} \in S_{X^*}$ and $(k+1)-$sequences $(x_n^{(1)}), (x_n^{(2)}), \dots,$ $(x_n^{(k+1)})$ in $S_X$ such that $\|\sum_{i=1}^{k+1}x_n^{(i)}\| \to k+1,$ but $|D_{k}[(x_n^{(i)})_{i=1}^{k+1}; (f_j)_{j=1}^{k}]| \geq \epsilon$ for all $n \in \mathbb{N}.$ By Hahn-Banach theorem, for every  $n \in \mathbb{N}$ there exists $g_n \in S_{X^*}$ such that $g_n(\sum_{i=1}^{k+1}x_n^{(i)})= \| \sum_{i=1}^{k+1}x_n^{(i)}\|.$  Now, for every $n \in \mathbb{N},$ choose a subspace $F_n$ of $X$ such that $F_n \subseteq (\cap_{j=1}^{k}ker(f_j)) \cap ker(g_n)$ and $dim(F_n)= \alpha.$ Let $1 \leq i \leq k+1.$ Since $g_n(x_n^{(i)}) \to 1$ and
		\[
		|g_n(x_n^{(i)})|= d(x_n^{(i)}, ker(g_n)) \leq d(x_n^{(i)}, F_n) \leq \|x_n^{(i)}\| =1,
		\]
		it follows that $\|x_n^{(i)}+F_n\| \to 1.$ For every $n \in \mathbb{N},$ define $y_n^{(i)}= \frac{x_n^{(i)}}{d(x_n^{(i)},F_n)}.$ Note that $y_n^{(i)}+F_n \in S_{X/F_n}$ for all $n \in \mathbb{N}$ and $g_n(y_n^{(i)}) \to 1.$ Therefore, we have
		\[
		\left \vert g_n \left( \sum\limits_{i=1}^{k+1} y_n^{(i)} \right) \right\vert= d\left(\sum\limits_{i=1}^{k+1} y_n^{(i)}, ker(g_n)\right) \leq  d\left(\sum\limits_{i=1}^{k+1} y_n^{(i)}, F_n\right) \leq  \sum\limits_{i=1}^{k+1}\left \Vert y_n^{(i)}+F_n \right \Vert=k+1
		\]
		and hence $\|\sum_{i=1}^{k+1} y_n^{(i)}+F_n\| \to k+1.$ Thus, by assumption, we get $D_k[(y_n^{(i)}+F_n)_{i=1}^{k+1}; (f_j)_{j=1}^{k}] \to 0.$ Further, by \Cref{remark vol} and \Cref{lem samir}, it follows that $D_k[(x_n^{(i)})_{i=1}^{k+1}; (f_j)_{j=1}^{k}] \to 0.$ This is a contradiction.\\
		$(2) \Rightarrow (4)$: Obvious.\\
		$(4) \Rightarrow (1)$: Suppose there exist $\epsilon>0,$ $f_1, f_2, \dots, f_{k} \in S_{X^*}$ and $(k+1)-$sequences $(x_n^{(1)}), (x_n^{(2)}), \dots,$ $(x_n^{(k+1)})$ in $S_X$ such that $\|\sum_{i=1}^{k+1}x_n^{(i)}\| \to k+1,$ but $D_{k}[(x_n^{(i)})_{i=1}^{k+1}; (f_j)_{j=1}^{k}] \geq \epsilon$ for all $n \in \mathbb{N}.$ By Hahn-Banach theorem, for every  $n \in \mathbb{N}$ there exists $g_n \in S_{X^*}$ such that $g_n(\sum_{i=1}^{k+1}x_n^{(i)})= \| \sum_{i=1}^{k+1}x_n^{(i)}\|.$  Now, for every $n \in \mathbb{N},$ choose a subspace $Y_n$ of $X$ such that $Y_n \subseteq (\cap_{j=1}^{k}ker(f_j)) \cap ker(g_n)$ and $codim(Y_n)= \beta.$ By replacing $F_n$ by $Y_n$ in the proof of $(3) \Rightarrow (1)$ and repeating the argument involved in the proof, we get a contradiction. Hence the proof.
		\end{proof}
	The following corollary is an immediate consequence of \Cref{thrm WUR quot}.
	\begin{corollary}\label{coro WUR quot}
		If $X$ is $k-$WUR and $M$ is a subspace of $X,$ then $X/M$ is $k-$WUR.
	\end{corollary}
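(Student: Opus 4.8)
The plan is to obtain the corollary by specializing the uniform infimum in statement $(2)$ of \Cref{thrm WUR quot} to the single subspace $M$, the only genuine input being the canonical isometry $(X/M)^{*} \cong M^{\perp}$. We may assume $\dim(X) \geq k+2$, so that \Cref{thrm WUR quot} applies; the remaining case is dealt with at the end. First I would fix $\epsilon > 0$ together with arbitrary functionals $\bar{f}_1, \bar{f}_2, \dots, \bar{f}_k \in S_{(X/M)^{*}}$ and lift each $\bar{f}_j$ through this isometry to a functional $f_j \in S_{X^{*}}$ with $M \subseteq ker(f_j)$ and $f_j(x) = \bar{f}_j(x+M)$ for every $x \in X$. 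In particular $M \subseteq \cap_{j=1}^{k} ker(f_j)$, so $M$ is one of the subspaces admissible in the infimum appearing in statement $(2)$ of \Cref{thrm WUR quot}.

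Next I would observe that the two moduli coincide, namely $\delta^{k}_{X/M}(\epsilon, (\bar{f}_j)_{j=1}^{k}) = \delta^{k}_{X/M}(\epsilon, (f_j)_{j=1}^{k})$, where the right-hand side is exactly the quantity occurring in the theorem. This is immediate, since for any representatives $x_1, x_2, \dots, x_{k+1} \in X$ one has, by \Cref{volume property}$(4)$, that $D_k[(x_i+M)_{i=1}^{k+1}; (f_j)_{j=1}^{k}]$ depends on the $f_j$ only through their action on the cosets, which is precisely the action of the $\bar{f}_j$; hence the sets over which the two infima are taken are identical. Since $X$ is $k$-WUR, the implication $(1) \Rightarrow (2)$ of \Cref{thrm WUR quot} gives $\inf\{\delta^{k}_{X/M'}(\epsilon, (f_j)_{j=1}^{k}) : M' \subseteq \cap_{j=1}^{k} ker(f_j)\} > 0$. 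As $M$ is one of the admissible $M'$, the value $\delta^{k}_{X/M}(\epsilon, (f_j)_{j=1}^{k})$ is bounded below by this positive infimum, hence strictly positive, and therefore $\delta^{k}_{X/M}(\epsilon, (\bar{f}_j)_{j=1}^{k}) > 0$. Since $\epsilon$ and $\bar{f}_1, \dots, \bar{f}_k$ were arbitrary, $X/M$ is $k$-WUR.

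I do not expect a genuine obstacle, as the substantive work has already been absorbed into \Cref{thrm WUR quot}; the corollary merely reads off a single instance of its uniform estimate. The one point that requires a little care is the identification of the two moduli, which rests on $(X/M)^{*} \cong M^{\perp}$ together with the compatibility of $D_k$ with passage to the quotient recorded in \Cref{volume property}$(4)$. Finally, for the excluded case $\dim(X) < k+2$, the space $X$ is finite dimensional and $M$ is automatically proximinal; then $X$ is $k$-rotund, so $X/M$ is $k$-rotund by the implication $(1) \Rightarrow (2)$ of \Cref{thrm quotiet k-rotund} (whose proof in this direction uses no dimension restriction), and in finite dimensions $k$-rotundity and $k$-WUR coincide by \Cref{rem KWUR}, so $X/M$ is again $k$-WUR.
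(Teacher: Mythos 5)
Your proposal is correct and follows exactly the route the paper intends: the corollary is read off from the implication $(1) \Rightarrow (2)$ of \Cref{thrm WUR quot} by lifting functionals in $S_{(X/M)^*}$ through the isometry $(X/M)^* \cong M^{\perp}$ and noting that $M$ is admissible in the uniform infimum, which is precisely why the paper calls it an immediate consequence. Your separate treatment of the case $dim(X) < k+2$ (via proximinality of $M$, \Cref{thrm quotiet k-rotund}, and the finite-dimensional coincidence of $k-$rotundity and $k-$WUR from \Cref{rem KWUR}) is a legitimate extra care point that the paper leaves implicit, and it is handled correctly.
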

 Now, we present an analogous result of \Cref{thrm WUR quot} for the notion $k-$WLUR.
	\begin{theorem}\label{thrm WLUR quot}
		Let $\alpha, \beta \in \mathbb{Z}^{+},$ $X$ be a Banach space satisfying $dim(X) \geq k+3,$ $1 \leq \alpha \leq dim(X)-(k+2)$, $k+2 \leq \beta \leq dim(X)-1$ and $x \in S_X.$ Then the following statements are equivalent.
		\begin{enumerate}
			\item $X$ is $k-$WLUR at $x.$
			\item  For every $\epsilon>0$ and $f_1, f_2, \dots, f_{k} \in S_{X^*},$ it follows that
			\begin{center} $\inf \{\delta^{k}_{X/M}(\epsilon, x+M, (f_j)_{j=1}^{k}):  M \subseteq \cap_{j=1}^{k}ker(f_j) \ \mbox{and} \ d(x,M)=1  \}>0.$
				\end{center}
			\item For every $\epsilon>0$ and $f_1, f_2, \dots, f_{k} \in S_{X^*},$ it follows that
			\begin{center}
			$\inf \{\delta^{k}_{X/F}(\epsilon, x+F, (f_j)_{j=1}^{k}): dim(F)= \alpha \ \mbox{with} \ F \subseteq \cap_{j=1}^{k}ker(f_j) \ \mbox{and} \ d(x,F)=1  \}>0.$
		\end{center}
			\item  For every $\epsilon>0$ and $f_1, f_2, \dots, f_{k} \in S_{X^*},$ it follows that \begin{center}$\inf \{\delta^{k}_{X/Y}(\epsilon, x+Y, (f_j)_{j=1}^{k}): codim(Y)= \beta \ \mbox{with} \ Y\subseteq \cap_{j=1}^{k}ker(f_j) \ \mbox{and} \ d(x,Y)=1 \}>0.$\end{center}
		\end{enumerate}
	\end{theorem}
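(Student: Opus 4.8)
The plan is to follow the scheme of \Cref{thrm WUR quot} almost verbatim, establishing the circle $(1) \Rightarrow (2) \Rightarrow (3) \Rightarrow (1)$ together with $(2) \Rightarrow (4) \Rightarrow (1)$. The implications $(2) \Rightarrow (3)$ and $(2) \Rightarrow (4)$ are immediate, since the families of subspaces appearing in $(3)$ and $(4)$ are subfamilies of the one in $(2)$, so their infima can only be larger. Throughout I would work with the sequential reading of the quotient moduli: for fixed $\epsilon$ and $f_1,\dots,f_k$, the condition $\inf\{\delta^{k}_{X/M}(\epsilon, x+M, (f_j)_{j=1}^{k})\}>0$ over the admissible $M$ is equivalent to saying that whenever admissible subspaces $M_n$ and quotient unit vectors $y_n^{(i)}+M_n \in S_{X/M_n}$ satisfy $\frac{1}{k+1}\|(x+M_n)+\sum_{i=1}^{k}(y_n^{(i)}+M_n)\| \to 1$, one has $D_k[x+M_n,(y_n^{(i)}+M_n)_{i=1}^{k};(f_j)_{j=1}^{k}] \to 0$.

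For $(1) \Rightarrow (2)$ I would argue by contradiction. If the infimum vanishes, there are $\epsilon>0$, functionals $f_1,\dots,f_k$, subspaces $M_n \subseteq \cap_{j=1}^{k}\ker(f_j)$ with $d(x,M_n)=1$, and $k$-sequences $x_n^{(i)}+M_n \in S_{X/M_n}$ with $|D_k[x+M_n,(x_n^{(i)}+M_n)_{i=1}^{k};(f_j)]| \geq \epsilon$ while $\|x+\sum_{i=1}^{k}x_n^{(i)}+M_n\| \to k+1$. Choosing near-best approximations $y_n^{(i)} \in M_n$ with $\|x_n^{(i)}-y_n^{(i)}\| \to 1$, the sandwich
\[
\left\|x+\sum_{i=1}^{k}x_n^{(i)}+M_n\right\| \leq \left\|x+\sum_{i=1}^{k}(x_n^{(i)}-y_n^{(i)})\right\| \leq \|x\|+\sum_{i=1}^{k}\|x_n^{(i)}-y_n^{(i)}\|
\]
forces $\frac{1}{k+1}\|x+\sum_{i=1}^{k}(x_n^{(i)}-y_n^{(i)})\| \to 1$ with each summand tending to the sphere. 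The sequential form of $k$-WLUR at $x$ then gives $D_k[x,(x_n^{(i)}-y_n^{(i)})_{i=1}^{k};(f_j)] \to 0$, and since $f_j \in M_n^{\perp}$, part $(4)$ of \Cref{remark vol} identifies this with $D_k[x+M_n,(x_n^{(i)}+M_n)_{i=1}^{k};(f_j)]$, contradicting the bound $\epsilon$. The only feature absent from \Cref{thrm WUR quot} is that the fixed coset $x+M_n$ must be lifted back to the honest vector $x$ via \Cref{remark vol}$(4)$, so that $k$-WLUR at the single point $x$ can be invoked.

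For $(3) \Rightarrow (1)$ (and identically $(4) \Rightarrow (1)$) I would again negate $k$-WLUR at $x$: there are $\epsilon>0$, $f_1,\dots,f_k$ and $k$-sequences $(x_n^{(i)})$ in $S_X$ with $\|x+\sum_{i=1}^{k}x_n^{(i)}\| \to k+1$ but $|D_k[x,(x_n^{(i)})_{i=1}^{k};(f_j)]| \geq \epsilon$. By Hahn--Banach choose $g_n \in S_{X^*}$ norming $x+\sum_{i=1}^{k}x_n^{(i)}$, whence $g_n(x) \to 1$ and $g_n(x_n^{(i)}) \to 1$, and a single $h_0 \in S_{X^*}$ with $h_0(x)=1$. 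Then I would pick a subspace $F_n \subseteq (\cap_{j=1}^{k}\ker f_j) \cap \ker g_n \cap \ker h_0$ with $\dim F_n = \alpha$; the inclusions in $\ker h_0$ and $\ker g_n$ give $d(x,F_n) \geq |h_0(x)| = 1$, hence $d(x,F_n)=1$, and $\|x_n^{(i)}+F_n\| \geq |g_n(x_n^{(i)})| \to 1$. After normalising $y_n^{(i)} = x_n^{(i)}/d(x_n^{(i)},F_n)$ one checks $\frac{1}{k+1}\|(x+F_n)+\sum_{i=1}^{k}(y_n^{(i)}+F_n)\| \to 1$, so statement $(3)$ yields $D_k[x+F_n,(y_n^{(i)}+F_n)_{i=1}^{k};(f_j)] \to 0$; then \Cref{remark vol}$(4)$ followed by \Cref{lem samir} transfers this to $D_k[x,(x_n^{(i)})_{i=1}^{k};(f_j)] \to 0$, the desired contradiction. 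For $(4) \Rightarrow (1)$ the argument is identical with a codimension-$\beta$ subspace $Y_n$ in place of $F_n$.

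I expect the \emph{main obstacle} to be precisely the constraint $d(x,\,\cdot\,)=1$ built into the quotient moduli at the point $x+(\cdot)$: to keep $x+F_n$ on the unit sphere of the quotient I must force $d(x,F_n)=1$, which I can guarantee only by spending one further codimension on the fixed norming functional $h_0$ of $x$, on top of the $k$ functionals $f_j$ and the sum-norming functional $g_n$. This extra kernel is the structural reason the dimension bounds are raised by one relative to \Cref{thrm WUR quot}, namely $\dim(X) \geq k+3$, $\alpha \leq \dim(X)-(k+2)$ and $\beta \geq k+2$; verifying that a subspace $F_n$ of dimension $\alpha$ (respectively a subspace $Y_n$ of codimension $\beta$) can still be fitted inside the intersection of these $k+2$ kernels is the bookkeeping that must be checked against those hypotheses.
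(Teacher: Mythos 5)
Your proposal is correct and follows essentially the same route as the paper: contradiction arguments with near-best approximants $y_n^{(i)} \in M_n$ and \Cref{remark vol}$(4)$ for $(1) \Rightarrow (2)$, and for $(3) \Rightarrow (1)$ (and $(4) \Rightarrow (1)$) subspaces inside $(\cap_{j=1}^{k}\ker f_j) \cap \ker(g_n) \cap \ker(g)$, where your $h_0$ is exactly the paper's fixed norming functional $g$ of $x$, followed by normalisation, \Cref{remark vol}$(4)$ and \Cref{lem samir}. Your closing observation that the extra kernel for $h_0$ is what raises the dimension bounds by one relative to \Cref{thrm WUR quot} is precisely the mechanism in the paper's proof.
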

	\begin{proof}
		$(1) \Rightarrow (2)$: Suppose there exist $\epsilon>0,$ $f_1, f_2, \dots, f_{k} \in S_{X^*},$ a sequence $(M_n)$ of   subspaces in  $X$ such that  $M_n\subseteq \cap_{j=1}^{k}ker(f_j)$, $d(x, M_n)=1$ and $ \delta^{k}_{X/M_n}(\epsilon, x+M_n, (f_j)_{j=1}^{k})< \frac{1}{n}$ for all $n \in \mathbb{N}.$ Then there exist $(k)-$sequences  $(x_n^{(1)}+M_n), (x_n^{(2)}+M_n), \dots, (x_n^{(k)}+M_n)$ with  $x_n^{(i)}+M_n \in S_{X/M_n}$ for all $n \in \mathbb{N}$ and $1 \leq i \leq k$ such that $|D_k[(x+M_n),(x_n^{(i)}+M_n)_{i=1}^{k}; (f_j)_{j=1}^{k}]| \geq \epsilon$ for all $n \in \mathbb{N}$, but $\|(x+M_n)+\sum_{i=1}^{k}(x_n^{(i)}+M_n)\| \to k+1$.   Since $d(x_n^{(i)}, M_n)=1,$ there exists $y_n^{(i)} \in M_n$ such that  $\|x_n^{(i)}-y_n^{(i)}\| \to 1$ for all $1 \leq i \leq k.$ Therefore, we have
		\[
		\left\Vert (x+M_n)+\sum_{i=1}^{k}(x_n^{(i)}+M_n)\right\Vert \leq \left \Vert x+\sum\limits_{i=1}^{k}x_n^{(i)}- \sum\limits_{i=1}^{k}y_n^{(i)} \right\Vert \leq \|x\|+ \sum\limits_{i=1}^{k}\|x_n^{(i)}-y_n^{(i)}\| 		
		\]
		and hence $\|x+\sum_{i=1}^{k}(x_n^{(i)}-y_n^{(i)})\| \to k+1.$ By $(1),$ we get  $D_{k}[x, (x_n^{(i)}-y_n^{(i)})_{i=1}^{k}; (f_j)_{j=1}^{k}] \to 0.$ Thus, by \Cref{remark vol}, we have $D_k[(x+M_n),(x_n^{(i)}+M_n)_{i=1}^{k}; (f_j)_{j=1}^{k}] \to 0.$ This is a contradiction. \\
		$(2) \Rightarrow (3)$: Obvious. \\
		$(3) \Rightarrow (1)$: Suppose there exist $\epsilon>0,$ $f_1, f_2, \dots, f_{k} \in S_{X^*}$ and $(k)-$sequences $(x_n^{(1)}), (x_n^{(2)}), \dots,$ $(x_n^{(k)})$ in $S_X$ such that $\|x+\sum_{i=1}^{k}x_n^{(i)}\| \to k+1,$ but $|D_{k}[x, (x_n^{(i)})_{i=1}^{k}; (f_j)_{j=1}^{k}]| \geq \epsilon$ for all $n \in \mathbb{N}.$ By Hahn-Banach theorem, there exists $g \in S_{X^*}$ such that $g(x)=\|x\|$ and for every  $n \in \mathbb{N},$ there exists $g_n \in S_{X^*}$ such that $g_n(x+\sum_{i=1}^{k+1}x_n^{(i)})= \|x+ \sum_{i=1}^{k+1}x_n^{(i)}\|.$ Observe that $g_n(x_n^{(i)}) \to 1$ for all $1 \leq i \leq k$ and $g_n(x) \to 1.$ Now, for every $n \in \mathbb{N},$ choose a subspace $F_n$  of $X$ such that $F_n \subseteq (\cap_{j=1}^{k}ker(f_j)) \cap ker(g_n) \cap ker(g)$ and $dim(F_n)= \alpha.$ Let  $1 \leq i \leq k.$ Since
		\[
		|g_n(x_n^{(i)})|= d(x_n^{(i)}, ker(g_n)) \leq d(x_n^{(i)}, F_n) \leq \|x_n^{(i)}\| =1,
		\]
	we have  $\|x_n^{(i)}+F_n\| \to 1.$ Similarly, $\|x+ F_n\|=1$ for all $n \in \mathbb{N}.$ For every $n \in \mathbb{N}$, define $y_n^{(i)}= \frac{x_n^{(i)}}{d(x_n^{(i)}, F_n)}$ and observe that $y_n^{(i)}+F_n \in S_{X/F_n}$ and $g_n(y_n^{(i)}) \to 1$. Therefore, we have
		\[
		\left \vert g_n \left(x+ \sum\limits_{i=1}^{k} y_n^{(i)} \right) \right\vert= d\left(x+\sum\limits_{i=1}^{k} y_n^{(i)}, ker(g_n)\right) \leq  d\left(x+\sum\limits_{i=1}^{k} y_n^{(i)}, F_n\right) \leq k+1
		\]
		and hence $\|(x+F_n)+\sum_{i=1}^{k} (y_n^{(i)}+F_n)\| \to k+1.$ By $(3),$ we get $D_k[(x+F_n), (y_n^{(i)}+F_n)_{i=1}^{k}; (f_j)_{j=1}^{k}] \to 0,$ Thus, by \Cref{remark vol} and  \Cref{lem samir}, $D_k[x, (x_n^{(i)})_{i=1}^{k}; (f_j)_{j=1}^{k}] \to 0.$ This is a contradiction.\\
		$(2) \Rightarrow (4)$: Obvious.\\
		$(4) \Rightarrow (1)$: Suppose there exist $\epsilon>0,$ $f_1, f_2, \dots, f_{k} \in S_{X^*}$ and $(k)-$sequences $(x_n^{(1)}), (x_n^{(2)}), \dots,$ $(x_n^{(k)})$ in $S_X$ such that $\|x+\sum_{i=1}^{k}x_n^{(i)}\| \to k+1,$ but $|D_{k}[x, (x_n^{(i)})_{i=1}^{k}; (f_j)_{j=1}^{k}]| \geq \epsilon$ for all $n \in \mathbb{N}.$ By Hahn-Banach theorem, there exists $g \in S_{X^*}$ such that $g(x)=\|x\|$ and for every  $n \in \mathbb{N},$ there exists $g_n \in S_{X^*}$ such that $g_n(x+\sum_{i=1}^{k+1}x_n^{(i)})= \|x+ \sum_{i=1}^{k+1}x_n^{(i)}\|.$ Now, for every $n \in \mathbb{N},$ choose a subspace $Y_n$ of $X$ such that $Y_n \subseteq (\cap_{j=1}^{k}ker(f_j)) \cap ker(g_n) \cap ker(g)$ and $codim(Y_n)= \beta.$  By replacing $F_n$ by $Y_n$ in the proof of $(3) \Rightarrow (1)$ and proceeding in a similar way, we get a contradiction. Hence the proof.
	\end{proof}
	As a consequence of \Cref{thrm WLUR quot}, we have the following result.
	\begin{corollary}\label{coro WLUR quot}
		If $X$ is $k-$WLUR and $M$ is a proximinal subspace of $X,$ then $X/M$ is $k-$WLUR.
	\end{corollary}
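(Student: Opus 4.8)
The plan is to verify the definition of $k-$WLUR for the quotient space $X/M$ at each point of its unit sphere, reducing everything to the forward implication $(1)\Rightarrow(2)$ of \Cref{thrm WLUR quot}. Fix an arbitrary $\bar{x}\in S_{X/M}$ and write $\bar{x}=x+M$. Since $M$ is proximinal, the distance $d(x,M)=\|\bar x\|=1$ is attained, so by replacing $x$ with $x-m_0$ for a best approximation $m_0\in P_M(x)$ I may assume the representative $x$ satisfies $\|x\|=d(x,M)=1$, that is, $x\in S_X$. By \Cref{def k-wur}, it then suffices to show that $\delta^{k}_{X/M}(\epsilon,\bar x,(h_j)_{j=1}^{k})>0$ for every $\epsilon>0$ and every $h_1,h_2,\dots,h_k\in S_{(X/M)^*}$.

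First I would translate the functionals on the quotient back to $X$. Under the isometric identification $(X/M)^*\cong M^{\perp}$, each $h_j$ corresponds to some $f_j\in M^{\perp}\cap S_{X^*}$; in particular $M\subseteq\ker f_j$ for every $1\le j\le k$, and hence $M\subseteq\cap_{j=1}^{k}\ker f_j$. By \Cref{volume property}, the $k-$dimensional determinant computed in $X/M$ using $h_1,\dots,h_k$ agrees with the one computed in $X$ using $f_1,\dots,f_k$ on any choice of representatives, so the two moduli coincide, $\delta^{k}_{X/M}(\epsilon,\bar x,(h_j)_{j=1}^{k})=\delta^{k}_{X/M}(\epsilon,x+M,(f_j)_{j=1}^{k})$.

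Now I would apply \Cref{thrm WLUR quot}. Since $X$ is $k-$WLUR, it is $k-$WLUR at the point $x\in S_X$, so the implication $(1)\Rightarrow(2)$ yields, for the functionals $f_1,\dots,f_k$ just constructed,
\[
\inf\bigl\{\delta^{k}_{X/M'}(\epsilon,x+M',(f_j)_{j=1}^{k}):M'\subseteq\cap_{j=1}^{k}\ker f_j\ \text{and}\ d(x,M')=1\bigr\}>0.
\]
Our subspace $M$ belongs to this admissible collection, because $M\subseteq\cap_{j=1}^{k}\ker f_j$ (from the previous step) and $d(x,M)=1$ (from the choice of representative). Consequently $\delta^{k}_{X/M}(\epsilon,x+M,(f_j)_{j=1}^{k})$ is bounded below by the positive infimum, hence is strictly positive, and therefore so is $\delta^{k}_{X/M}(\epsilon,\bar x,(h_j)_{j=1}^{k})$. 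As $\epsilon$ and the $h_j$ were arbitrary, $X/M$ is $k-$WLUR at $\bar x$, and since $\bar x$ was an arbitrary point of $S_{X/M}$, the space $X/M$ is $k-$WLUR.

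The main obstacle, and the single place where proximinality of $M$ is genuinely used, is the selection of the norm-one representative $x$ with $d(x,M)=1$: this is exactly what guarantees both that $x$ lies on $S_X$ (so the hypothesis of $k-$WLUR applies to it) and that $M$ meets the constraint $d(x,M)=1$ needed to place it inside the infimum family of \Cref{thrm WLUR quot}. The remaining work is bookkeeping, namely confirming $f_j\in M^{\perp}$ forces $M\subseteq\ker f_j$ and invoking \Cref{volume property} to match the determinants across the quotient. Alternatively, one could argue sequentially, lifting any sequences $(x_n^{(i)}+M)$ that attain the quotient norm to norm-one representatives $z_n^{(i)}\in S_X$ and using $\|x+\sum_{i=1}^{k}z_n^{(i)}\|\to k+1$ with the sequential form of $k-$WLUR at $x$; but the modulus argument via \Cref{thrm WLUR quot} is the most economical.
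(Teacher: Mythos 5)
Your proof is correct and follows essentially the same route as the paper: proximinality of $M$ supplies a representative $x-m_0\in S_X$ of the coset with $d(x-m_0,M)=1$, after which the implication $(1)\Rightarrow(2)$ of \Cref{thrm WLUR quot} (with $f_j\in S_{M^\perp}\cong S_{(X/M)^*}$, so that $M\subseteq\cap_{j=1}^{k}\ker f_j$ automatically) yields $\delta^{k}_{X/M}(\epsilon,x+M,(f_j)_{j=1}^{k})>0$. The only difference is that you spell out the bookkeeping (the identification $(X/M)^*\cong M^\perp$ and membership of $M$ in the infimum family) that the paper leaves implicit.
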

\begin{proof}
	 Let $\epsilon >0$, $x+M \in S_{X/M}$ and $f_1, f_2, \dots, f_k \in S_{M^{\perp}} \cong S_{(X/M)^*}$. By assumption, there exists $y \in M$ such that $d(x, M)=\|x-y\|=1$. Since  $X$ is $k-$WLUR at $(x-y)$,  it follows from \Cref{thrm WLUR quot} that $\delta_{X/M}^{k}(\epsilon, (x-y)+M, (f_j)_{j=1}^{k})>0$. Thus, $X/M$ is $k-$WLUR.
\end{proof}
In  \Cref{eg WLUR quot prox}, we see that, in general, a quotient space of  a $k-$WLUR space need not be $k-$WLUR.  Further, we note that there exists a space $X$ and a subspace $M$ of $X$ such that both $X$ and $X/M$ are $k-$WUR (hence, $k-$WLUR), but $M$ is not proximinal on $X$. To see this, consider any WUR space which is not reflexive (see, \Cref{eg converse}).
\begin{example}\label{eg WLUR quot prox}
	Let $k \in \mathbb{Z}^+$ and $X= (\ell_1, \|\cdot\|_r)$ be the space considered in  \cite[Example 1]{SmTu1990}. That is,  for any $x \in \ell_1$, $\|x\|_r= (\|x\|_1^2+ \|S(x)\|_2^2)^{\frac{1}{2}}$, where $S: \ell_1 \to \ell_2$ defined as $S(\alpha_n)= (\alpha_n2^{\frac{-n}{2}})$ for all $(\alpha_n) \in \ell_1$. By \cite[Theorem 1]{SmTu1990}, $(\ell_1, \|\cdot\|_1) \cong X/M$ for some subspace $M$ of $X$.  Therefore, $X/M$ is not $k-$rotund. However, following \cite[Example 6]{Smit1978a}, it is easy to see that $X$ is LUR (hence, $X$ is $k-$WLUR).
\end{example}
From the \Cref{def k-wur}, it follows that every subspace of a $k-$WUR (respectively, $k-$WLUR) space is $k-$WUR (respectively, $k-$WLUR). Further, in view of \Cref{coro WLUR quot,coro WUR quot}, it is natural to ask whether $k-$WUR and  $k-$WLUR are three space properties \cite[Definition 1.7.8]{Megg1998}  or not. To see this, consider a space $X= M \oplus_1 (\mathbb{R}^k, \|\cdot\|_1)$, where $M$ is a WUR space and $k \in \mathbb{Z^+}.$ Observe that $X$ is not $k-$WLUR. However, $X/M$ and $M$ are $k-$WUR. Thus, $k-$WUR and $k-$WLUR are not three space properties.

	Now we present a result that is closely related to \Cref{coro WLUR quot}, which  also generalizes \cite[Proposition 3.2]{Rmou2017}.
\begin{proposition}
Let $Y$ be a subspace of $X$ such that $Y^\perp \subseteq NA(X),$ where $NA(X)$ is the set of all norm attaining functionals on $X$. If $X$ is  $k-$WLUR, then $X/Y$ is $k-$rotund.
\end{proposition}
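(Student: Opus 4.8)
The plan is to verify the $k$-rotundity of $X/Y$ directly. I take $\bar x_1, \dots, \bar x_{k+1} \in S_{X/Y}$, say $\bar x_i = x_i + Y$, with $\|\sum_{i=1}^{k+1}\bar x_i\| = k+1$, and aim to show $V[(\bar x_i)_{i=1}^{k+1}] = 0$. By the Hahn--Banach theorem there is $g \in S_{(X/Y)^*}$ with $g(\sum_{i=1}^{k+1}\bar x_i) = k+1$; since each $\|\bar x_i\| = 1$, this forces $g(\bar x_i) = 1$ for all $i$. Under the isometric identification $(X/Y)^* \cong Y^{\perp}$, let $\tilde g \in S_{Y^{\perp}}$ correspond to $g$, so $\tilde g(x_i) = g(\bar x_i) = 1$. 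This is exactly where the hypothesis $Y^{\perp} \subseteq NA(X)$ enters: $\tilde g$ attains its norm, so there is $z \in S_X$ with $\tilde g(z) = \|\tilde g\| = 1$, and this single sphere point $z$ will serve as the fixed apex for the local rotundity condition.

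Next I would lift the $\bar x_i$ to suitable representatives. Since $d(x_i, Y) = \|\bar x_i\| = 1$, for each $i$ I choose $y_n^{(i)} \in Y$ with $\|x_i - y_n^{(i)}\| \to 1$ and set $u_n^{(i)} = x_i - y_n^{(i)}$. Then $\|u_n^{(i)}\| \to 1$, the class $u_n^{(i)} + Y = \bar x_i$ is unchanged, and $\tilde g(u_n^{(i)}) = \tilde g(x_i) = 1$ because $\tilde g \in Y^{\perp}$. For any $\alpha \in \mathcal{S}_{k+1}(k)$ this gives $\|z + \sum_{i \in \alpha} u_n^{(i)}\| \to k+1$: the lower bound $k+1$ holds for every $n$ since $\tilde g(z + \sum_{i\in\alpha} u_n^{(i)}) = 1 + k$, while the triangle inequality supplies the matching upper bound $\|z\| + \sum_{i\in\alpha}\|u_n^{(i)}\| \to k+1$.

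Now I would apply $k$-WLUR at $z$. Using the sequential characterization of $k$-WLUR (the corollary following \Cref{equkwur}, with the apex fixed at $z$ and the $k$ sequences $(u_n^{(\alpha_i)})_{i=1}^k$ having norms tending to $1$), the convergence $\frac{1}{k+1}\|z + \sum_{i\in\alpha}u_n^{(i)}\| \to 1$ yields $D_k[z, (u_n^{(\alpha_i)})_{i=1}^k; (h_j)_{j=1}^k] \to 0$ for every $\alpha \in \mathcal{S}_{k+1}(k)$ and all $h_1, \dots, h_k \in S_{Y^{\perp}} \subseteq S_{X^*}$. To pass from these apex determinants to the genuine $(k+1)$-configuration I invoke \cite[Lemma 2]{Suya2000} exactly as in \Cref{prop kWLUR prox convex}:
\[
\left| D_k[(u_n^{(i)})_{i=1}^{k+1}; (h_j)_{j=1}^k] \right| \le \sum_{\alpha \in \mathcal{S}_{k+1}(k)} \left| D_k[z, (u_n^{(\alpha_i)})_{i=1}^k; (h_j)_{j=1}^k] \right| \to 0.
\]
By \Cref{remark vol}(4) and the fact that each $u_n^{(i)}$ represents $\bar x_i$ with $h_j \in Y^{\perp} \cong (X/Y)^*$, the left-hand side equals the $n$-independent quantity $D_k[(\bar x_i)_{i=1}^{k+1}; (h_j)_{j=1}^k]$, which must therefore vanish. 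Since $h_1, \dots, h_k \in S_{(X/Y)^*}$ were arbitrary, $V[(\bar x_i)_{i=1}^{k+1}] = 0$, so $X/Y$ is $k$-rotund.

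The step I expect to be the main obstacle is not a single computation but the structural idea driving the argument: norm attainment of every functional in $Y^{\perp}$ is precisely what supplies one sphere point $z$ at which all $k+1$ lifted configurations simultaneously reach the flat situation $\frac{1}{k+1}\|z + \sum_{i\in\alpha} u_n^{(i)}\| \to 1$, so that the \emph{local} (rather than uniform) hypothesis $k$-WLUR suffices. Care is also needed to ensure that the representatives $u_n^{(i)}$ keep norms tending to $1$, fix the classes $\bar x_i$, and preserve $\tilde g(u_n^{(i)}) = 1$ at once; combined with the determinant inequality of \cite{Suya2000}, this is what converts the local rotundity of $X$ into the $k$-rotundity of the quotient.
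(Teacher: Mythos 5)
Your proof is correct and follows essentially the same route as the paper's: the same Hahn--Banach functional, the same use of $Y^\perp \subseteq NA(X)$ to produce the apex $z \in S_X$, lifting via sequences $y_n^{(i)} \in Y$, applying $k-$WLUR at $z$ for each $\alpha \in \mathcal{S}_{k+1}(k)$, and then the determinant inequality of \cite[Lemma 2]{Suya2000} together with the $Y^\perp$-invariance of the determinants. The only differences are cosmetic: you argue directly rather than by contradiction, and you apply the Suyalatu inequality to the lifted vectors $u_n^{(i)}$ in $X$ before passing to the quotient, whereas the paper passes to the quotient first.
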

\begin{proof} 	Suppose $X/Y$ is not  $k-$rotund. Then there exist $(k+1)$ elements $x_1+Y, x_2+Y, \dots, x_{k+1}+Y$ in $S_{X/Y}$ such that $\|\sum_{i=1}^{k+1}(x_i+Y)\|=k+1,$ but  $D_k[(x_i+Y)_{i=1}^{k+1};(\widetilde{g_j})_{j=1}^k]=\epsilon$ for some $\epsilon >0$ and $\widetilde{g_1}, \widetilde{g_2}, \dots, \widetilde{g_k}  \in S_{(X/Y)^*}.$ By Hahn-Banach Theorem, there exists  $\widetilde{f} \in S_{(X/Y)^*}$ such that $\widetilde{f}(\sum_{i=1}^{k+1}x_i+Y)=k+1.$ Observe that $\widetilde{f}(x_i+Y)=1$ for all $1\leq i\leq k+1.$ Let $T: (X/Y)^* \xrightarrow{} Y^\perp$ be the isometric isomorphism defined by $T(\widetilde{h})=\widetilde{h} \circ q,$ where $q: X \xrightarrow{} X/Y$ is the quotient map. Clearly, $\widetilde{f} \circ q \in Y^\perp \subseteq NA(X)$ and $\|\widetilde{f} \circ q\|=1.$ Thus, there exists $z\in S_X$ such that $(\widetilde{f} \circ q )(z)=\widetilde{f}(z+Y)=1.$
	For every $1\leq i \leq k+1,$ choose a sequence $(y_n^{(i)})$ in $Y$ such that $\|x_i-y_n^{(i)}\| \xrightarrow{} 1.$ Let $\alpha \in \mathcal{S}_{k+1}(k).$ Note that
	\[
	(\widetilde{f} \circ q)\left(\sum_{i=1}^k(x_{\alpha_i}-y_n^{(\alpha_i)})+z\right) = (\widetilde{f} \circ q)\left(\sum_{i=1}^kx_{\alpha_i}+z\right) = \widetilde{f}\left(\left(\sum_{i=1}^kx_{\alpha_i}+z\right)+Y\right)=k+1.
	\]
	Since
	$
	(\widetilde{f} \circ q)\left(\sum_{i=1}^k(x_{\alpha_i}-y_n^{(\alpha_i)})+z \right) \leq \left\Vert \sum_{i=1}^k(x_{\alpha_i}-y_n^{(\alpha_i)})+z \right\Vert \leq \sum_{i=1}^k\|x_{\alpha_i}-y_n^{(\alpha_i)}\|+1,
	$
	we have $\|\sum_{i=1}^k(x_{\alpha_i}-y_n^{(\alpha_i)})+z\|\xrightarrow{} k+1.$ By assumption, we have $D_k[z, (x_{\alpha_i}-y_n^{(\alpha_i)})_{i=1}^k;(f_j)_{j=1}^k] \xrightarrow{} 0$ for all $f_1, f_2, \dots, f_k \in S_{X^*},$ in particular, $D_k[z, (x_{\alpha_i}-y_n^{(\alpha_i)})_{i=1}^k;(\widetilde{g_j} \circ q)_{j=1}^k] \xrightarrow{} 0.$ Thus, it follows that $D_k[z, (x_{\alpha_i})_{i=1}^k;(\widetilde{g_j} \circ q)_{j=1}^k] = 0$ which further implies, $D_k[z+Y, (x_{\alpha_i}+Y)_{i=1}^k;(\widetilde{g_j})_{j=1}^k] =0.$ By \cite[Lemma 2]{Suya2000}, \[ |D_k[(x_{i}+Y)_{i=1}^{k+1};(\widetilde{g_j})_{j=1}^k]| \leq \sum_{\alpha \in \mathcal{S}_{k+1}(k)} |D_k[z+Y,(x_{\alpha_i}+Y)_{i=1}^{k}; (\widetilde{g_j})_{j=1}^k ]| = 0,\]
	which is a contradiction. Hence $X/Y$ is $k-$rotund.
\end{proof}
In the rest of the section, we mainly focus on the finite and infinite $\ell_p-$product of the notions $k-$WUR, $k-$WLUR, $k-$WMLUR.

In \Cref{eg k not k-1 WUR} it is noted that the notions $k-$WUR (respectively, $k-$WLUR, $k-$WMLUR) and $(k+1)-$WUR (respectively, $(k+1)-$WLUR, $(k+1)-$WMLUR)  do not coincide in general. However, we prove that these notions coincide in $\ell_p-$product of a Banach space for $1 < p< \infty$. For this we need the following results.
\begin{theorem}\label{thrm WUR stability}
	Let $1 \leq p \leq \infty,$ $X_i$ be a Banach space for all $ 1 \leq i \leq k$ and $X=(\oplus_pX_i)_{i=1}^{k}.$ Then the following statements hold.
	\begin{enumerate}
		\item If $X$ is $k-$WUR, then $X_i$ is WUR for some $1 \leq i \leq k.$
		\item If $X$ is $k-$WLUR, then $X_i$ is  WLUR for some $1 \leq i \leq k.$
		\item If $X$ is $k-$WMLUR, then $X_i$ is  WMLUR for some $1 \leq i \leq k.$
	\end{enumerate}
\end{theorem}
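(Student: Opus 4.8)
The plan is to prove all three statements by contraposition, in each case manufacturing a single configuration in $X$ that contradicts the relevant sequential characterization (\Cref{equkwur} for $k-$WUR, its corollary for $k-$WLUR, and \Cref{def KWLUR} for $k-$WMLUR). Throughout write points of $X$ as $k-$tuples $(a_1,\dots,a_k)$ with $a_i\in X_i$, set $\lambda=k^{-1/p}$ (with $\lambda=1$ when $p=\infty$), and note that a point all of whose coordinates lie in $S_{X_i}$, scaled by $\lambda$, lies in $S_X$; also, for each $l$ the functional $f_l\in S_{X^*}$ obtained by placing $g^{(l)}\in S_{X_l^*}$ in the $l-$th coordinate and $0$ elsewhere has $\|f_l\|=1$, since for a finite product $X^*=(\oplus_q X_i^*)$. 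Suppose no factor has the stated property. For $k-$WUR this yields, for every $1\le l\le k$, a functional $g^{(l)}\in S_{X_l^*}$, a number $\epsilon_l>0$, and sequences $(u_n^{(l)}),(v_n^{(l)})$ in $S_{X_l}$ with $\tfrac12\|u_n^{(l)}+v_n^{(l)}\|\to1$ and $|g^{(l)}(v_n^{(l)})-g^{(l)}(u_n^{(l)})|\ge\epsilon_l$; for $k-$WLUR and $k-$WMLUR the analogous data are attached to a point $w^{(l)}\in S_{X_l}$ at which the factor fails.

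For $k-$WUR I would take the diagonal perturbation $x_n^{(k+1)}=\lambda(u_n^{(1)},\dots,u_n^{(k)})$ and, for $1\le j\le k$, let $x_n^{(j)}$ agree with $x_n^{(k+1)}$ except that its $j-$th coordinate is $v_n^{(j)}$. Each $x_n^{(j)}\in S_X$, and the $l-$th coordinate of $\sum_{j}x_n^{(j)}$ equals $\lambda(k\,u_n^{(l)}+v_n^{(l)})$; choosing $h_n\in S_{X_l^*}$ with $h_n(u_n^{(l)}+v_n^{(l)})=\|u_n^{(l)}+v_n^{(l)}\|$ forces $h_n(u_n^{(l)}),h_n(v_n^{(l)})\to1$, whence $\|k\,u_n^{(l)}+v_n^{(l)}\|\to k+1$ and so $\tfrac1{k+1}\|\sum_j x_n^{(j)}\|\to1$. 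Evaluating the determinant against $(f_l)_{l=1}^k$ yields a matrix whose $l-$th row is $g^{(l)}(u_n^{(l)})$ off the diagonal and $g^{(l)}(v_n^{(l)})$ on the diagonal; subtracting the last column from the others and expanding along the top row reduces it to a diagonal block, so $|D_k[(x_n^{(j)})_{j=1}^{k+1};(f_l)_{l=1}^k]|=\lambda^k\prod_{l=1}^k|g^{(l)}(v_n^{(l)})-g^{(l)}(u_n^{(l)})|\ge\lambda^k\prod_l\epsilon_l>0$, contradicting \Cref{equkwur}. The $k-$WLUR argument is identical with the fixed point $x=\lambda(w^{(1)},\dots,w^{(k)})$ in the role of $x_n^{(k+1)}$: each $x_n^{(j)}$ perturbs only coordinate $j$ of $x$ to $v_n^{(j)}$, one gets $\tfrac1{k+1}\|x+\sum_{j=1}^k x_n^{(j)}\|\to1$, and the same reduction shows $X$ fails $k-$WLUR at $x$.

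The $k-$WMLUR case uses the fixed point $x=\lambda(w^{(1)},\dots,w^{(k)})$, sets $x_n^{(j)}$ ($1\le j\le k$) equal to $x$ with its $j-$th coordinate replaced by $u_n^{(j)}$, and takes $x_n^{(k+1)}=\lambda(v_n^{(1)},\dots,v_n^{(k)})$; since $\|u_n^{(l)}+v_n^{(l)}-2w^{(l)}\|\to0$, the $l-$th coordinate of $(k+1)x-\sum_j x_n^{(j)}$ is $\lambda(2w^{(l)}-u_n^{(l)}-v_n^{(l)})\to0$, so $\|(k+1)x-\sum_j x_n^{(j)}\|\to0$. I expect the determinant evaluation to be the one genuinely delicate point: unlike the previous two cases the coefficient matrix no longer collapses to a single diagonal, because the last column carries a perturbation in every coordinate. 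Writing $\theta_n^{(l)}=g^{(l)}(u_n^{(l)})-g^{(l)}(w^{(l)})$ and using $g^{(l)}(u_n^{(l)})+g^{(l)}(v_n^{(l)})\to 2g^{(l)}(w^{(l)})$ (so the last-column entries tend to $-\theta_n^{(l)}$), a row reduction subtracting $g^{(l)}(w^{(l)})$ times the top row from each lower row reduces $D_k$ to $(-1)^k(k+1)\prod_{l=1}^k\theta_n^{(l)}$ up to an error tending to $0$; since $|\theta_n^{(l)}|\ge \epsilon_l/2-o(1)$, the modulus of $D_k$ stays bounded away from $0$, contradicting \Cref{def KWLUR}. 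Pinning down the constant $(-1)^k(k+1)$—most transparently by computing the determinant of the limiting matrix whose last column is $(1,-\theta^{(1)},\dots,-\theta^{(k)})^{\top}$, after adding its first $k$ columns to the last—is the crux of this part, and the perturbation errors are absorbed using \Cref{lem samir} together with boundedness of all entries.
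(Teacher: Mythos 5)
Your proof is correct in all three parts, and in parts $(1)$ and $(2)$ it is essentially the paper's argument in light disguise: where the paper uses a nested staircase ($z_n^{(t)}$ replaces the first $t-1$ coordinates of the base point by the second sequence), you use a ``spoke'' configuration perturbing one coordinate at a time; both make the determinant against the coordinate-supported functionals collapse, after one column reduction, to $\lambda^k\prod_l(g^{(l)}(v_n^{(l)})-g^{(l)}(u_n^{(l)}))$, and your support-functional estimate $\|k\,u_n^{(l)}+v_n^{(l)}\|\to k+1$ replaces the paper's appeal to \cite[Lemma 3.8]{GaTh2023} for the convex-combination norms, which makes your version slightly more self-contained. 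Part $(3)$ is where you genuinely diverge. The paper does not argue from \Cref{def KWLUR} at all: it converts $k$-WMLUR, at both the factor and the product level, into the best-approximation statement of \Cref{thrm MLUR B_X} ($B_X$ is $k$-$w$SCh on $2S_X$), which permits sequences in the closed unit \emph{ball} with $\|z_n^{(t)}-w\|\to 1=d(w,B_X)$ and therefore lets the same staircase/diagonal determinant go through verbatim. You instead attack \Cref{def KWLUR} directly, and the unit-sphere constraint there is exactly what forces your bundled last column $x_n^{(k+1)}=\lambda(v_n^{(1)},\dots,v_n^{(k)})$ (a staircase would break the midpoint condition, since $l\,u_n^{(l)}+(k+1-l)v_n^{(l)}\not\approx(k+1)w^{(l)}$ for $l\neq(k+1)/2$) and hence the non-diagonal determinant; your computation of its principal term $(-1)^k(k+1)\prod_l\theta_n^{(l)}$ (row-reduce by $g^{(l)}(w^{(l)})$ times the top row, then add the first $k$ columns to the last), the lower bound $|\theta_n^{(l)}|\geq\epsilon_l/2-o(1)$ from $2\theta_n^{(l)}=g^{(l)}(u_n^{(l)}-v_n^{(l)})+o(1)$, and the absorption of the $o(1)$ column by multilinearity and boundedness are all sound. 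The trade-off: the paper's route is softer (diagonal determinant, no limit computation) but leans on the machinery of \Cref{thrm MLUR B_X}; yours is definition-level and independent of that theorem, at the price of the one delicate determinant limit, which you correctly identified as the crux and carried out.
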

\begin{proof}
$(1)$: Let $k \geq 2 ,$ $1 \leq p < \infty$ and $X$ be $k-$WUR. Suppose $X_i$ is not WUR for all $1 \leq i \leq k.$ Then for each $1 \leq i \leq k$ there exist $f_i \in S_{X_i^{*}}$ and $(2)-$sequences $(x_n^{(i)}),$ $(y_n^{(i)})$ in $S_{X_i}$ such that $\|x_n^{(i)}+y_n^{(i)}\| \to 2,$ but $|f_i(x_n^{(i)}-y_n^{(i)})|> \epsilon$ for all $n \in \mathbb{N},$ for some  $\epsilon>0.$ For every $n \in \mathbb{N},$ define
	\[
		\begin{aligned}
			z_n^{(1)} &= \frac{1}{k^{\frac{1}{p}}}(
			x_n^{(1)}, x_n^{(2)}, x_n^{(3)}, \dots, x_n^{(k-1)}, x_n^{(k)} ),\\
			z_n^{(2)} &= \frac{1}{k^{\frac{1}{p}}}(
			y_n^{(1)}, x_n^{(2)}, x_n^{(3)}, \dots, x_n^{(k-1)}, x_n^{(k)}),\\
			z_n^{(3)} &= \frac{1}{k^{\frac{1}{p}}}(
			y_n^{(1)}, y_n^{(2)}, x_n^{(3)}, \dots, x_n^{(k-1)}, x_n^{(k)}), \\
			\vdots & \\
			z_n^{(k+1)} &= \frac{1}{k^{\frac{1}{p}}}(
			y_n^{(1)}, y_n^{(2)}, y_n^{(3)}, \dots, y_n^{(k-1)}, y_n^{(k)}).
			\end{aligned}
	\]
	%z_n^{(i)}= \frac{1}{k^{\frac{1}{p}}}\left[ \sum\limits_{j=1}^{i-1}y_n^{(j)}e_j+ \sum\limits_{j=i}^{k}x_n^{(j)}e_j\right]
	Clearly,  $z_n^{(t)} \in S_X$  for all $1 \leq t\leq k+1$ and $n \in \mathbb{N}.$ Now for every $1 \leq j \leq k,$ let $g_j= (0,\dots,0, f_j,0, \dots 0) \in S_{X^*},$ here $f_j$ is in the $j^{th}$ coordinate. Since
	\[
	\begin{aligned}
	D_k[(z_n^{(t)})_{t=1}^{k+1}; (g_j)_{j=1}^{k}] &= \frac{1}{k^{\frac{k}{p}}}
		\begin{vmatrix}
			1 & 1 & 1& \dots & 1 \\
			f_1(x_n^{(1)}) & f_1(y_n^{(1)}) & f_1(y_n^{(1)}) & \dots & f_1(y_n^{(1)})\\
				f_2(x_n^{(2)}) & f_2(x_n^{(2)}) &  f_2(y_n^{(2)}) & \dots & f_2(y_n^{(2)})\\
			\vdots & \vdots & \vdots & \ddots & \vdots \\
			f_k(x_n^{(k)}) & f_k(x_n^{(k)}) &  f_k(x_n^{(k)}) &  \dots & f_k(y_n^{(k)})
		\end{vmatrix}\\
	&= \frac{1}{k^{\frac{k}{p}}}
	\begin{vmatrix}
	0 & 0 & 0& \dots & 1 \\
	f_1(x_n^{(1)}-y_n^{(1)}) & 0 & 0 & \dots & f_1(y_n^{(1)})\\
	f_2(x_n^{(2)}-y_n^{(2)}) & f_2(x_n^{(2)}-y_n^{(2)}) &  0 & \dots & f_2(y_n^{(2)})\\
	\vdots & \vdots & \vdots & \ddots & \vdots \\
	f_k(x_n^{(k)}-y_n^{(k)}) & f_k(x_n^{(k)}-y_n^{(k)}) &  f_k(x_n^{(k)}-y_n^{(k)}) &  \dots & f_k(y_n^{(k)})
	\end{vmatrix},
	\end{aligned}
\]
it follows that
\[
|D_k[(z_n^{(t)})_{t=1}^{k+1}; (g_j)_{j=1}^{k}]| = \frac{1}{k^{\frac{k}{p}}}\left\vert \prod\limits_{i=1}^{k}f_i(x_n^{(i)}-y_n^{(i)})\right\vert > \frac{1}{k^{\frac{k}{p}}} \epsilon^{k}.
\]	
 Since
\[
\begin{aligned}
	\frac{1}{k+1}\left\Vert\sum\limits_{t=1}^{k+1}z_n^{(t)}\right\Vert&= \frac{1}{(k+1)k^{\frac{1}{p}}} \left(\sum\limits_{i=1}^{k}\left\Vert ix_n^{(i)}+(k+1-i)y_n^{(i)}\right \Vert ^{p}\right)^{\frac{1}{p}}	\\
	&= \frac{1}{k^{\frac{1}{p}}} \left(\sum\limits_{i=1}^{k}\left\Vert\frac{i}{k+1}x_n^{(i)}+\left( 1- \frac{i}{k+1} \right) y_n^{(i)}\right\Vert^{p}\right)^{\frac{1}{p}},
\end{aligned}
\]
we have $\|\sum_{t=1}^{k+1}z_n^{(t)}\| \to k+1.$
This is a contradiction. For the case $p= \infty,$ a similar proof holds.\\
$(2)$: Let $k \geq 2 ,$ $1 \leq p < \infty$ and $X$ be $k-$WLUR. Suppose $X_i$ is not WLUR for all $1 \leq i \leq k.$ Then for each $1 \leq i \leq k$ there exist $f_i \in S_{X_i^{*}}$, $y_i\ \in S_{X_i}$ and  a sequence $(x_n^{(i)})$ in $S_{X_i}$ such that $\|y_i+x_n^{(i)}\| \to 2,$ but $|f_i(y_i-x_n^{(i)})|> \epsilon$ for all $n \in \mathbb{N},$ for some $\epsilon>0.$  Now, using the preceding functionals, sequences and by assuming $y_n^{(i)}=y_i$ for all $n \in \mathbb{N}$ and $1 \leq i \leq k$,  construct $k-$functionals $g_1, g_2, \dots, g_k$ in $S_{X^*}$ and  $k-$sequences $(z_n^{(1)}), (z_n^{(2)}), \dots, (z_n^{(k)})$ in $S_{X}$ as in the proof of $(1).$  Let $z= \frac{1}{k^{\frac{1}{p}}}(y_1, y_2, \dots, y_k) \in S_{X}.$ By following the similar technique as in the proof of $(1),$ we have $\|z+ \sum_{i=1}^{k}z_n^{(i)}\| \to k+1$ and $|D_k[z, (z_n^{(i)})_{i=1}^{k}; (g_j)_{j=1}^{k}]| > \frac{1}{k^{\frac{k}{p}}} \epsilon^{k}$ for all $n \in \mathbb{N}$. This is a contradiction. For the case $p= \infty,$ a similar proof holds.\\
$(3)$: Let $k \geq 2 ,$ $1 \leq p < \infty$ and $X$ be $k-$WMLUR. Suppose $X_i$ is not WMLUR for all $1 \leq i \leq k.$ Therefore, by \Cref{thrm MLUR B_X}, $B_{X_i}$ is not $w$SCh on $2S_{X_i}$  for all $1 \leq i \leq k.$ Then for each $1 \leq i \leq k,$ there exist $f_i \in S_{X_i^{*}},$ $w_i \in 2S_{X_i}$ and $(2)-$sequences $(x_n^{(i)}),$ $(y_n^{(i)})$ in $B_{X_i}$ such that $\|x_n^{(i)}-w_i\| \to 1$ and $\|y_n^{(i)}-w_i\| \to 1,$ but $|f_i(x_n^{(i)}-y_n^{(i)})|> \epsilon$ for all $n \in \mathbb{N},$ for some  $\epsilon>0.$ Now, using the  preceding sequences and functionals, we define $(k+1)-$sequences $(z_n^{(1)}), (z_n^{(2)}), \dots, (z_n^{(k+1)})$ in $B_X$ and $(k)-$functionals $g_1, g_2, \dots, g_k$ in $S_{X^*}$ as in the proof of $(1).$ Let $w= \frac{1}{k^{\frac{1}{p}}}(w_1, w_2, \dots, w_k) \in 2S_X.$ Since for any $1 \leq t \leq k+1,$ we have
\[
1= d(w, B_X) \leq \|z_n^{(t)}-w\|= \frac{1}{k^{\frac{1}{p}}} \left( \sum\limits_{j=1}^{t-1} \|y_n^{(j)}-w_j\|^p+ \sum_{j=t}^{k}\|x_n^{(j)}-w_j\|^{p}\right)^{\frac{1}{p}}
\]
and hence $\|z_n^{(t)}-w\| \to d(w, B_X).$ Using the similar argument involved  in  the proof of $(1)$, we have
\[
|D_k[(z_n^{(t)})_{t=1}^{k+1}; (g_j)_{j=1}^{k}]| = \frac{1}{k^{\frac{k}{p}}}\left\vert \prod\limits_{i=1}^{k}f_i(x_n^{(i)}-y_n^{(i)})\right\vert > \frac{1}{k^{\frac{k}{p}}} \epsilon^{k}.
\]	
Thus, $B_X$ is not $k-$$w$SCh at $w.$ Therefore, by \Cref{thrm MLUR B_X}, $X$ is not $k-$WMLUR, which is a contradiction. For the case $p= \infty,$ a similar proof holds. Hence the proof.
\end{proof}
We notice that \Cref{thrm WUR stability} can be extended to infinite $\ell_p-$product  which is presented in the following  corollary.
\begin{corollary}\label{coro WUR stability}
	Let $1 \leq p \leq  \infty,$ $X_i$ be a Banach space for all $i \in \mathbb{N}$ and $X= (\oplus_pX_i)_{i \in \mathbb{N}}.$ If $X$ is $k-$WUR (respectively, $k-$WLUR, $k-$WMLUR), then all but except $(k-1)-$spaces of the collection $\{X_i\}_{i \in \mathbb{N}}$ are WUR (respectively, WLUR, WMLUR).
\end{corollary}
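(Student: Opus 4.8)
The plan is to argue by contradiction and reduce to the finite case already settled in \Cref{thrm WUR stability}. Suppose the conclusion fails for the $k$-WUR part, so that at least $k$ of the coordinate spaces are not WUR; I would pick $k$ of them, say $X_{i_1}, X_{i_2}, \dots, X_{i_k}$ with $i_1 < i_2 < \cdots < i_k$, none of which is WUR, and consider the finite $\ell_p$-product $Z = (\oplus_p X_{i_j})_{j=1}^{k}$. The key structural observation is that $Z$ is isometrically isomorphic to the closed subspace of $X$ consisting of those elements supported on the coordinates $\{i_1, \dots, i_k\}$ and zero elsewhere; this holds verbatim for every $1 \leq p \leq \infty$, since in each case the finitely supported vectors carry exactly the finite $\ell_p$-norm.

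Now I would invoke the fact, recorded earlier in this section, that $k$-WUR passes to subspaces: since $X$ is $k$-WUR, so is the subspace isometric to $Z$, and hence $Z$ itself is $k$-WUR. But $Z$ is a finite $\ell_p$-product of exactly $k$ Banach spaces, so \Cref{thrm WUR stability}$(1)$ forces at least one of $X_{i_1}, \dots, X_{i_k}$ to be WUR, contradicting the choice of these spaces. Therefore at most $k-1$ of the $X_i$ can fail to be WUR, which is the assertion. The $k$-WLUR case is identical, using the subspace heredity of $k$-WLUR together with \Cref{thrm WUR stability}$(2)$.

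For the $k$-WMLUR case the same scheme works, but one extra point must be checked, because the heredity of $k$-WMLUR by subspaces was not isolated earlier. I would verify it directly: if $Y$ is a closed subspace of a $k$-WMLUR space and $x, (x_n^{(i)})_{i=1}^{k+1}$ lie in $S_Y$ with $\|(k+1)x - \sum_{i=1}^{k+1} x_n^{(i)}\| \to 0$, then the same data sit in $S_X$; applying $k$-WMLUR of $X$ to the Hahn--Banach extensions $f_j \in S_{X^*}$ of arbitrary $g_j \in S_{Y^*}$ and using that $f_j$ and $g_j$ agree on $Y$ gives $D_k[(x_n^{(i)})_{i=1}^{k+1};(g_j)_{j=1}^{k}] \to 0$, so $Y$ is $k$-WMLUR. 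With this in hand, the contradiction closes exactly as before via \Cref{thrm WUR stability}$(3)$.

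The only genuine content beyond \Cref{thrm WUR stability} is the reduction itself, and the single place that needs care is the isometric identification of the finite subproduct with a subspace of $X$: specifically, that the norm induced on the finitely supported vectors is precisely the finite $\ell_p$-norm, uniformly in $p$ and including the endpoint $p=\infty$. Once that identification is clear, everything else is a transcription of the finite product result, so I expect no further obstacle.
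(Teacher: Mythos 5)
Your proof is correct and takes essentially the approach the paper intends: the paper states the corollary as an immediate extension of \Cref{thrm WUR stability}, and the natural argument is exactly your reduction, namely that any $k$ coordinates span a $1$-complemented subspace of $X$ isometric to the finite $\ell_p$-product, that $k$-WUR and $k$-WLUR pass to subspaces (as the paper notes after \Cref{coro WLUR quot}), and that the finite-product theorem then forbids $k$ coordinates from simultaneously failing the one-dimensional property. Your extra Hahn--Banach verification that $k$-WMLUR is inherited by closed subspaces is also correct and fills the one point the paper leaves tacit.
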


The following corollary is an immediate consequence of \Cref{prop k implies k+1}, \Cref{coro WUR stability} and  \cite[A.2, A.3, A.4]{Smit1986}.
\begin{corollary}\label{coro lp WUR}
	Let $1 < p < \infty.$ Then the following statements are equivalent.
	\begin{enumerate}
		\item $X$ is WUR (respectively, WLUR, WMLUR).
		\item $\ell_p(X)$ is WUR (respectively, WLUR, WMLUR).
		\item $\ell_p(X)$ is $k-$WUR (respectively, $k-$WLUR, $k-$WMLUR).
	\end{enumerate}
\end{corollary}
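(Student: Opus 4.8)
The plan is to prove the three-way equivalence by closing the cycle $(1) \Rightarrow (2) \Rightarrow (3) \Rightarrow (1)$ uniformly across the WUR, WLUR and WMLUR strands, drawing on exactly the three ingredients flagged in the statement. First I would dispose of $(1) \Leftrightarrow (2)$ by a direct appeal to Smith's stability results \cite[A.2, A.3, A.4]{Smit1986}: for $1 < p < \infty$, the infinite $\ell_p$-product $\ell_p(X)$ is WUR (respectively, WLUR, WMLUR) precisely when the base space $X$ is. This needs no further argument and sets up $(2)$ as a genuine equivalent of $(1)$.

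Next I would establish $(2) \Rightarrow (3)$. Since WUR is by definition $1$-WUR, iterating \Cref{prop k implies k+1}(1) gives that $\ell_p(X)$ is $k$-WUR for every $k \in \mathbb{Z}^+$; the analogous induction using \Cref{prop k implies k+1}(2) handles the WLUR case at each point of $S_{\ell_p(X)}$, hence globally. The WMLUR strand is the one that does not fall under \Cref{prop k implies k+1}, so I would route it through the best-approximation picture: by \Cref{thrm MLUR B_X}, $\ell_p(X)$ is WMLUR iff $B_{\ell_p(X)}$ is $1$-$w$SCh on $\ell_p(X)$; applying \Cref{prop wUSCh (k+1)}(3) inductively upgrades this to $B_{\ell_p(X)}$ being $k$-$w$SCh on $\ell_p(X)$, which by \Cref{thrm MLUR B_X} again is exactly the assertion that $\ell_p(X)$ is $k$-WMLUR.

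For $(3) \Rightarrow (1)$ I would invoke \Cref{coro WUR stability} applied to the product $\ell_p(X) = (\oplus_p X_i)_{i \in \mathbb{N}}$ in the special case where every coordinate space $X_i$ equals the base space $X$. If $\ell_p(X)$ is $k$-WUR, the corollary guarantees that all but at most $(k-1)$ of the $X_i$ are WUR. As the $X_i$ are identical copies of $X$ and there are infinitely many of them, the base space cannot fail to be WUR, for that would produce infinitely many non-WUR coordinates; hence $X$ is WUR. The same application of \Cref{coro WUR stability} yields the WLUR and WMLUR conclusions verbatim, completing the cycle and therefore the equivalence of $(1)$, $(2)$ and $(3)$.

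The main obstacle is not analytic but organizational: because \Cref{prop k implies k+1} is stated only for WUR and WLUR, the WMLUR case of $(2) \Rightarrow (3)$ has to be handled by the detour through $k$-$w$SCh of the unit ball, and one must check that \Cref{coro WUR stability} and Smith's results genuinely cover all three properties in the $\ell_p$ setting. Once that bookkeeping is in place, every step reduces to assembling results already available in the excerpt, with no fresh estimate required.
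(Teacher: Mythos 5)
Your proposal is correct and follows essentially the same route as the paper, which obtains the corollary as an immediate consequence of \Cref{prop k implies k+1}, \Cref{coro WUR stability} and \cite[A.2, A.3, A.4]{Smit1986}: Smith's results give $(1)\Leftrightarrow(2)$, iterating \Cref{prop k implies k+1} gives $(2)\Rightarrow(3)$, and \Cref{coro WUR stability} applied to infinitely many identical copies of $X$ gives $(3)\Rightarrow(1)$. Your one deviation is an improvement rather than a divergence: since \Cref{prop k implies k+1} is stated only for $k$-WUR and $k$-WLUR, the WMLUR case of $(2)\Rightarrow(3)$ is not literally covered by the paper's cited ingredients, and your detour through \Cref{thrm MLUR B_X} together with an inductive application of \Cref{prop wUSCh (k+1)} correctly supplies that missing step.
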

From the preceding result, we conclude that unlike the notion WUR (respectively, WLUR, WMLUR), the notion $k-$WUR (respectively, $k-$WLUR, $k-$WMLUR) for $k>1$, need not be lifted to $\ell_p-$ product space. To see this consider a space $X$ which is $k-$WUR but not rotund (see, \Cref{eg k not k-1 WUR}).

Now, we present a necessary condition for a finite $\ell_p-$product space to be $k-$WUR (respectively, $k-$WLUR, $k-$WMLUR).
\begin{theorem}\label{thrm WUR l_p finite for}
	Let $X,$ $Y$ be  Banach spaces and $1 \leq p \leq \infty.$ For any $k \in \mathbb{Z}^{+},$ there exist $k_1, k_2 \in \mathbb{Z}^{+}$ with $k=k_1+k_2-1$ such that the following statements hold.
	\begin{enumerate}
		\item If $X \oplus_p Y$ is $k-$WUR, then $X$ is $k_1-$WUR  and $Y$ is $k_2-$WUR.
		\item If $X \oplus_p Y$ is $k-$WLUR, then $X$ is $k_1-$WLUR  and $Y$ is $k_2-$WLUR.
		\item If $X \oplus_p Y$ is $k-$WMLUR, then $X$ is $k_1-$WMLUR  and $Y$ is $k_2-$WMLUR.
	\end{enumerate}
\end{theorem}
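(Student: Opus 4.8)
The plan is to reduce all three parts to a single ``rotundity‑defect adds under $\oplus_p$'' lemma. I first record the soft inputs: $X\cong X\oplus_p\{0\}$ and $Y\cong\{0\}\oplus_p Y$ are isometric subspaces of $X\oplus_p Y$, and by \Cref{def k-wur} every subspace of a $k$-WUR space is again $k$-WUR; hence both $X$ and $Y$ are $k$-WUR, and there is a least $k_1\in\{1,\dots,k\}$ for which $X$ is $k_1$-WUR. I put $k_2=k-k_1+1\in\{1,\dots,k\}$, so $k=k_1+k_2-1$. If $k_1=1$ then $X$ is WUR while $Y$ is $k=k_2$-WUR as a subspace, and we are done; so I may assume $k_1\ge2$, whence by minimality $X$ is not $(k_1-1)$-WUR. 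It then suffices to show $Y$ is $k_2$-WUR, which I would establish by contradiction through the following lemma.

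Key Lemma: if $X$ is not $m$-WUR and $Y$ is not $\ell$-WUR, then $X\oplus_p Y$ is not $(m+\ell)$-WUR. Applied with $m=k_1-1$ and $\ell=k_2$, a failure of $k_2$-WUR for $Y$ would force $X\oplus_p Y$ to fail $(k_1+k_2-1)$-WUR $=k$-WUR, a contradiction. To prove the lemma I reuse the staircase of \Cref{thrm WUR stability}. By \Cref{equkwur} fix $u_n^{(1)},\dots,u_n^{(m+1)}\in S_X$ with $\frac{1}{m+1}\|\sum_i u_n^{(i)}\|\to1$, together with $\phi_1,\dots,\phi_m\in S_{X^*}$ and $\epsilon_1>0$ such that $|D_m[(u_n^{(i)})_{i=1}^{m+1};(\phi_j)_{j=1}^m]|\ge\epsilon_1$, and symmetrically $v_n^{(1)},\dots,v_n^{(\ell+1)}\in S_Y$, $\psi_1,\dots,\psi_\ell\in S_{Y^*}$, $\epsilon_2>0$. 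With $c=2^{1/p}$ (and $c=1$ when $p=\infty$) I set
\[
z_n^{(t)}=\tfrac1c\bigl(u_n^{(\min(t,m+1))},v_n^{(\max(t-m,1))}\bigr)\in S_{X\oplus_p Y},\qquad 1\le t\le m+\ell+1,
\]
so that the $X$-coordinate climbs the $u$-staircase while the $Y$-coordinate is frozen at $v_n^{(1)}$, and then the roles reverse; I take $g_j=(\phi_j,0)$ for $j\le m$ and $g_{m+i}=(0,\psi_i)$ for $i\le\ell$, each in $S_{(X\oplus_p Y)^*}$.

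Two computations close the lemma. Subtracting column $m+1$ from every other column in the $(m+\ell+1)\times(m+\ell+1)$ matrix of $D_{m+\ell}[(z_n^{(t)})_t;(g_j)_j]$, the frozen coordinates annihilate the constant top row off column $m+1$; expanding along that row leaves a block‑diagonal minor whose two blocks are the difference matrices of the two staircases. Identifying each block with the reduced form of a factor determinant (the reduction via \Cref{remark vol}) gives $|D_{m+\ell}[(z_n^{(t)})_t;(g_j)_j]|=c^{-(m+\ell)}\,|D_m[(u_n^{(i)})_{i=1}^{m+1};(\phi_j)_{j=1}^m]|\,|D_\ell[(v_n^{(i)})_{i=1}^{\ell+1};(\psi_j)_{j=1}^\ell]|\ge c^{-(m+\ell)}\epsilon_1\epsilon_2>0$. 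For the norms, a supporting functional $f_n\in S_{X^*}$ with $f_n(\sum_i u_n^{(i)})\to m+1$ forces $f_n(u_n^{(i)})\to1$ for every $i$, so $\|\sum_i u_n^{(i)}+\ell\,u_n^{(m+1)}\|\to m+\ell+1$; the same bound holds for the $Y$-sum, giving $\frac{1}{m+\ell+1}\|\sum_t z_n^{(t)}\|\to1$. Together these deny $(m+\ell)$-WUR.

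The main obstacle is the determinant bookkeeping above: verifying that the two freezings really make the matrix block‑triangular after the single column reduction, and that each block equals a factor determinant up to a sign and the harmless scalar $c^{-1}$, uniformly in $n$ (passing to a subsequence on which $|D_m|,|D_\ell|$ stay bounded below); the norm estimates via \Cref{lem samir} and the alignment functionals are routine. Parts $(2)$ and $(3)$ follow from the identical staircase: for WLUR one freezes the base point $z=\frac1c(x,y)$ assembled from the points $x,y$ witnessing the local failures and uses the pointwise monotonicity \Cref{prop k implies k+1}$(2)$; for WMLUR one replaces the endpoint data by the sequences witnessing that $B_X,B_Y$ fail to be $k$-$w$SCh and invokes \Cref{thrm MLUR B_X}, exactly as in \Cref{thrm WUR stability}$(2)$–$(3)$. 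In each case $k_1$ is the least index for the corresponding property of $X$, which is at most $k$ because that property passes to subspaces.
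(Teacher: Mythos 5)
Your proposal is correct and takes essentially the same route as the paper's proof: the paper likewise fixes the minimal rotundity index of one factor (it works with the least $k_2$ for $Y$ rather than the least $k_1$ for $X$), and its inline contradiction argument is exactly your Key Lemma with $m=k_1$, $\ell=k_2-1$, proved by the same frozen-coordinate staircase $z_n^{(t)}$, the same diagonal functionals $h_j=(f_j,0),(0,g_j)$, and the same block-triangular factorization $|D_k|=r^k|D_{k_1}||D_{k_2-1}|\geq r^k\epsilon^2$, with parts $(2)$ and $(3)$ handled by freezing the base point and by the $k$-$w$SCh reformulation via \Cref{thrm MLUR B_X}, just as you indicate. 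The only differences are cosmetic: your supporting-functional argument for $\frac{1}{k+1}\Vert\sum_t z_n^{(t)}\Vert\to 1$ replaces the paper's appeal to a numerical lemma from \cite{GaTh2023}, and your single column subtraction producing a block-diagonal minor is the same determinant bookkeeping the paper performs directly.
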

\begin{proof}
	$(1)$: Let  $X \oplus_{p} Y$ be $k-$WUR space. Suppose $Y$ is WUR, then there is nothing to prove. Assume $Y$ is not WUR. Then there exists $k_2 \in \mathbb{Z}^{+}$ such that  $2 \leq  k_2 \leq k$ and  $Y$ is $k_2-$WUR, but   not $(k_2-1)-$WUR. Now, it is enough to show that $X$ is $k_1-$WUR, where $k_1= k-k_2+1.$ Suppose $X$ is not $k_1-$WUR. Then there exist $\epsilon >0,$ $(k_1+1)-$sequences $(x_n^{(1)}), (x_n^{(2)}),$ $\dots,$$(x_n^{(k_1+1)})$ in $S_X$ with $\|\sum_{i=1}^{k_1+1}x_n^{(i)}\| \to k_1+1,$ but $|D_{k_1}[(x_n^{(i)})_{i=1}^{k_1+1}; (f_j)_{j=1}^{k_1}]| \geq \epsilon$ for all $n \in \mathbb{N}$ and for some $f_1, f_2, \dots, f_{k_1} \in S_{X^*}.$ Since $Y$ is not $(k_2-1)-$WUR, there exist $(k_2)-$sequences $(y_n^{(1)}),$ $(y_n^{(2)}), \dots,$$(y_n^{(k_2)})$ in $S_Y$ with $\|\sum_{i=1}^{k_2}y_n^{(i)}\| \to k_2,$ but $|D_{k_2-1}[(y_n^{(i)})_{i=1}^{k_2}; (g_j)_{j=1}^{k_2-1}] |\geq \epsilon$ for all $n \in \mathbb{N}$ and for some $g_1, g_2, \dots, g_{k_2-1} \in S_{Y^*}.$ Choose $r>0$ with $\|(r,r)\|_{p}=1.$  For every $n \in \mathbb{N}$ and $1 \leq i \leq k+1,$ define
	\[
	z_n^{(i)}=  \begin{cases}
		r(x_n^{(i)}, y_n^{(k_2)}), & \ \textnormal{if} \ 1 \leq i \leq k_1+1;\\
		r(x_n^{(k_1+1)}, y_n^{(i-k_1-1)}), & \ \textnormal{if} \ k_1+2 \leq i \leq k_1+k_2.	
	\end{cases}
	\]
Clearly, $z_n^{(i)} \in X\oplus_p Y$ and $\|z_n^{(i)}\|=1$ for all $1 \leq i \leq  k+1,$ $n \in \mathbb{N}.$ Note that
\begin{small}
\[
	\begin{aligned}
		1- \frac{1}{k+1}\left\Vert \sum\limits_{i=1}^{k+1}z_n^{(i)}\right\Vert &= \|(r,r)\|_p- \frac{r}{k+1} \left\Vert \left( \sum\limits_{i=1}^{k_1+1}x_n^{(i)}+(k_2-1)x_n^{(k_1+1)}, \sum\limits_{i=1}^{k_2-1}y_n^{(i)}+ (k_1+1)y_n^{(k_2)}
		\right)\right\Vert\\
		 &= \|(r,r)\|_p- \frac{r}{k+1} \left\Vert \left(\left\Vert \sum\limits_{i=1}^{k_1+1}x_n^{(i)}+(k_2-1)x_n^{(k_1+1)}\right\Vert, \left\Vert \sum\limits_{i=1}^{k_2-1}y_n^{(i)}+ (k_1+1)y_n^{(k_2)} \right\Vert
		\right)\right\Vert\\
		& \leq  r \left\Vert\left(1- \frac{1}{k+1}\left\Vert\sum\limits_{i=1}^{k_1}x_n^{(i)}+ (k_2)x_n^{(k_1+1)}\right\Vert, 1- \frac{1}{k+1}\left\Vert\sum\limits_{i=1}^{k_2-1}y_n^{(i)}+ (k_1+1)y_n^{(k_2)}\right\Vert \right)\right \Vert\\
		&\leq  r \left\vert 1- \frac{1}{k+1} \left\Vert \sum\limits_{i=1}^{k_1}x_n^{(i)}+  (k_2)x_n^{(k_1+1)}\right\Vert\right \vert+
			r \left\vert 1- \frac{1}{k+1} \left\Vert \sum\limits_{i=1}^{k_2-1}y_n^{(i)}+ (k_1+1)y_n^{(k_2)}\right\Vert\right \vert.
		\end{aligned}
\]	
\end{small}	
Thus, by \cite[Lemma 3.8]{GaTh2023}, it follows that  $\frac{1}{k+1}\| \sum_{i=1}^{k+1}z_n^{(i)}\| \to 1.$ For every $1 \leq j \leq k,$ define
\[
h_j= \begin{cases}
	(f_j,0), & \ \textnormal{if} \ 1 \leq j \leq k_1 ;\\
	(0, g_{j-k_1}), & \ \textnormal{if} \ k_1+1 \leq j \leq k_1+k_2-1.
\end{cases}
\]
Clearly, $h_j \in (X \oplus_p Y)^{*}$ and $\|h_j\|=1$ for all $1 \leq j \leq  k.$  Now, consider
\[
	\begin{aligned}
		D_k[(z_n^{(i)})_{i=1}^{k+1}; (h_j)_{j=1}^{k}]&=  D_{k}[((rx_n^{(i)},0))_{i=1}^{k_1+1}, (r(x_n^{(k_1+1)}, y_n^{(l)}-y_n^{(k_2)}))_{l=1}^{k_2-1}; (h_j)_{j=1}^{k}] \\
		&=  r^{k} det \left(\begin{bmatrix}
			A_n & B_n\\
			0 & C_n
		\end{bmatrix}\right)\hspace{-0.1cm},
	\end{aligned}
\]
where $A_n= [a_{i,j}^{(n)}]$, here  $a_{1,j}^{(n)}=1,$ $a_{i+1,j}^{(n)}=f_i(x_n^{(j)})$  for all $1 \leq i \leq k_1,$ $1 \leq j \leq k_1+1$; \\
$ B_n=[b_{l,m}^{(n)}]$, here $ b_{1,m}^{(n)}=1, b_{l+1,m}^{(n)}= f_l(x_n^{k_1+1})$ for all $1 \leq m \leq k_2-1$, $1 \leq l \leq k_1$;\\ $C_n=[c_{s,t}^{(n)}],$ here $c_{s,t}^{(n)}= g_s(y_n^{t}-y_n^{k_2})$ for all $1 \leq s,t \leq k_2-1.$\\ Therefore, by assumption, for all $n \in \mathbb{N}$ we have
\[
		\vert D_k[(z_n^{(i)})_{i=1}^{k+1}; (h_j)_{j=1}^{k}] \vert =  r^{k} |D_{k_1}[(x_n^{(i)})_{i=1}^{k_1+1}; (f_j)_{j=1}^{k_1}] D_{k_2-1}[(y_n^{(i)})_{i=1}^{k_2}; (g_j)_{j=1}^{k_2-1}]|
		 \geq  r^{k} \epsilon^{2},
\]
which is a contradiction to $X\oplus_p Y$ is $k-$WUR. Thus, $X$ is $k_1-$WUR. \\
$(2)$: 	Let  $X \oplus_{p} Y$ be  $k-$WLUR space. Suppose $Y$ is WLUR, then there is nothing to prove. Assume $Y$ is not WLUR. Then there exists $k_2 \in \mathbb{Z}^{+}$ such that  $2 \leq  k_2 \leq k$ and  $Y$ is $k_2-$WLUR, but   not $(k_2-1)-$WLUR. Now, it is enough to show that $X$ is $k_1-$WLUR, where $k_1= k-k_2+1.$ Suppose $X$ is not $k_1-$WLUR. Then there exist $x \in S_X$, $\epsilon >0$ and $(k_1)-$sequences $(x_n^{(1)}), (x_n^{(2)}),\dots,(x_n^{(k_1)})$ in $S_X$ with $\|x+\sum_{i=1}^{k_1}x_n^{(i)}\| \to k_1+1,$ but $|D_{k_1}[x,(x_n^{(i)})_{i=1}^{k_1}; (f_j)_{j=1}^{k_1}]| \geq \epsilon$ for all $n \in \mathbb{N}$ and for some $f_1, f_2, \dots, f_{k_1} \in S_{X^*}$. Since $Y$ is not $(k_2-1)-$WLUR, there exist $y \in S_X$ and $(k_2-1)-$sequences $(y_n^{(1)}),$ $(y_n^{(2)}), \dots,$$(y_n^{(k_2-1)})$ in $S_Y$ with $\|y+\sum_{i=1}^{k_2-1}y_n^{(i)}\| \to k_2$, but $|D_{k_2-1}[y,(y_n^{(i)})_{i=1}^{k_2-1}; (g_j)_{j=1}^{k_2-1}] |\geq \epsilon$ for all $n \in \mathbb{N}$ and for some $g_1, g_2, \dots, g_{k_2-1} \in S_{Y^*}$. Using preceding functionals, consider $k-$functionals $h_1, h_2, \dots, h_{k}$ in $S_{(X \oplus_p Y)^*}$ as in the proof of $(1)$. Choose $r>0$ with $\|(r,r)\|_{p}=1.$ Let $z= r(x, y)$.  For every $n \in \mathbb{N}$ and $1 \leq i \leq k$, define
	\[
	z_n^{(i)}=  \begin{cases}
		r(x_n^{(i)}, y), & \ \textnormal{if} \ 1 \leq i \leq k_1;\\
		r(x, y_n^{(i-k_1)}), & \ \textnormal{if} \ k_1+1 \leq i \leq k_1+k_2-1.			
	\end{cases}
	\]
	Clearly, $z, z_n^{(i)} \in S_{(X\oplus_p Y)}$ for all $1 \leq i \leq  k$ and $n \in \mathbb{N}$. Now using similar argument as in the proof of $(1)$, we have  $\frac{1}{k+1}\|z+ \sum_{i=1}^{k}z_n^{(i)}\| \to 1$ and
		$	\vert D_k[z,(z_n^{(i)})_{i=1}^{k}; (h_j)_{j=1}^{k}] \vert  \geq  r^{k} \epsilon^{2}$ for all $n \in \mathbb{N}$. This contradicts the assumption $X\oplus_p Y$ is $k-$WLUR. Thus, $X$ is $k_1-$WLUR. \\
$(3)$: Let $X \oplus_{p} Y$ be  $k-$WMLUR space. Suppose $Y$ is WMLUR, then there is nothing to prove. Assume $Y$ is not WMLUR. Then there exists $k_2 \in \mathbb{Z}^{+}$ such that  $2 \leq  k_2 \leq k$ and  $Y$ is $k_2-$WMLUR, but   not $(k_2-1)-$WMLUR. Now, it is enough to show that $X$ is $k_1-$WMLUR, where $k_1= k-k_2+1.$ Suppose $X$ is not $k_1-$WMLUR. Then, by \Cref{thrm MLUR B_X}, $B_X$ is not $k_1-$$w$SCh on $2S_X.$ Therefore there exist $x \in S_X,$ $\epsilon >0$ and $(k_1+1)-$sequences $(x_n^{(1)}), (x_n^{(2)}),\dots,(x_n^{(k_1+1)})$ in $B_X$ with $\|x_n^{(i)}-2x\| \to 1,$ but $|D_{k_1}[(x_n^{(i)})_{i=1}^{k_1+1}; (f_j)_{j=1}^{k_1}]| \geq \epsilon$ for all $n \in \mathbb{N}$ and  for some $f_1, f_2, \dots, f_{k_1} \in S_{X^*}.$ Since, by \Cref{thrm MLUR B_X}, $B_Y$ is not $(k_2-1)-$$w$SCh on $2S_Y,$ there exist $y \in S_Y$ and $(k_2)-$sequences $(y_n^{(1)}),(y_n^{(2)}), \dots,(y_n^{(k_2)})$ in $B_Y$ with $\|y_n^{(i)}-2y\| \to 1,$ but $|D_{k_2-1}[(y_n^{(i)})_{i=1}^{k_2}; (g_j)_{j=1}^{k_2-1}] |\geq \epsilon$ for all $n \in \mathbb{N}$ and for some $g_1, g_2, \dots, g_{k_2-1} \in S_{Y^*}.$ Choose $r>0$ with $\|(r,r)\|_{p}=1.$ Using preceding sequences and functionals, consider $(k+1)-$sequences $(z_n^{(1)}), (z_n^{(2)}), \dots, (z_n^{(k+1)})$ in $B_{X \oplus_p Y}$ and $k-$functionals $h_1, h_2, \dots, h_k$ in $S_{(X \oplus_p Y)^*}$  as in the proof of $(1)$. Let $z=r(x,y) \in S_{X \oplus_p Y}$. It is easy to verify that, $\|z_n^{(i)}-2z\| \to 1$ for all $1 \leq i \leq k+1$. Now, following the similar technique, as in the proof of $(1)$, we get
$\vert D_k[(z_n^{(i)})_{i=1}^{k+1}; (h_j)_{j=1}^{k}] \vert \geq  r^{k} \epsilon^{2}$ for all $n \in \mathbb{N}$, which implies $B_{X\oplus_p Y}$ is not $k-$$w$SCh on $X\oplus_p Y.$ Therefore, by \Cref{thrm MLUR B_X}, $X$ is not $k-$WMLUR, which is a contradiction. Thus, $X$ is $k_1-$WMLUR. Hence the proof.
\end{proof}
The next result is an immediate consequence of  \Cref{thrm WUR l_p finite for} and the fact  $(\oplus_pX_i)_{i=1}^{d} \cong (\oplus_pX_i)_{i=1}^{d-1} \oplus_p X_{d}.$
\begin{corollary}\label{coro WUR l_p finite for}
	Let $d \in \mathbb{Z}^{+}$,  $d>1$ and $1 \leq p \leq \infty.$ Let $X_i$ be a Banach space for all $1 \leq i \leq d$ and $X= (\oplus_pX_i)_{i=1}^{d}.$ If $X$ is $k-$WUR (respectively, $k-$WLUR, $k-$WMLUR), then there exist $k_1, k_2, \dots, k_d \in \mathbb{Z}^{+}$ such that $k= \sum_{i=1}^{d}k_i-d+1$ and $X_i$ is $k_i-$WUR (respectively, $k_i-$WLUR, $k_i-$WMLUR).
\end{corollary}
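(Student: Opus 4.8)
The plan is to prove the statement by induction on $d$, using the isometric isomorphism $(\oplus_pX_i)_{i=1}^{d} \cong (\oplus_pX_i)_{i=1}^{d-1} \oplus_p X_{d}$ to peel off one coordinate at a time and reduce each step to \Cref{thrm WUR l_p finite for}. Since that theorem treats the three notions $k-$WUR, $k-$WLUR and $k-$WMLUR in exactly the same manner, I would carry out the argument for $k-$WUR and then note that the $k-$WLUR and $k-$WMLUR cases follow verbatim by invoking the corresponding parts of \Cref{thrm WUR l_p finite for}.

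For the base case $d=2$, the assertion is precisely \Cref{thrm WUR l_p finite for}: if $X_1 \oplus_p X_2$ is $k-$WUR, there exist $k_1, k_2 \in \mathbb{Z}^{+}$ with $k = k_1 + k_2 - 1 = \sum_{i=1}^{2} k_i - 2 + 1$ such that $X_i$ is $k_i-$WUR for $i = 1, 2$. For the inductive step, suppose the result holds for $d-1$ factors. I would write $Z = (\oplus_pX_i)_{i=1}^{d-1}$, so that $X \cong Z \oplus_p X_d$, and since this isomorphism is isometric it preserves the relevant rotundity notions. Applying \Cref{thrm WUR l_p finite for} to the two-fold product $Z \oplus_p X_d$, I obtain $m, k_d \in \mathbb{Z}^{+}$ with $k = m + k_d - 1$ such that $Z$ is $m-$WUR and $X_d$ is $k_d-$WUR. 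Now $Z$ is an $\ell_p-$product of $d-1$ Banach spaces that is $m-$WUR, so the induction hypothesis yields $k_1, \dots, k_{d-1} \in \mathbb{Z}^{+}$ with $m = \sum_{i=1}^{d-1} k_i - (d-1) + 1$ such that $X_i$ is $k_i-$WUR for each $1 \leq i \leq d-1$.

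Combining the two index relations gives
\[
k = m + k_d - 1 = \left(\sum_{i=1}^{d-1} k_i - (d-1) + 1\right) + k_d - 1 = \sum_{i=1}^{d} k_i - d + 1,
\]
which is exactly the required identity, and each $X_i$ is $k_i-$WUR. This completes the induction.

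The argument carries essentially no analytic content beyond \Cref{thrm WUR l_p finite for}; the only point requiring care is the bookkeeping of the indices, namely verifying that the telescoping relation $k = \sum_{i=1}^{d} k_i - d + 1$ emerges correctly from the single-step relation $k = m + k_d - 1$ together with the induction hypothesis $m = \sum_{i=1}^{d-1} k_i - (d-1) + 1$. I expect this index arithmetic, rather than any geometric difficulty, to be the substance of the proof, since the heavy lifting has already been done in the two-factor case.
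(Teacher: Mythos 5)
Your proof is correct and follows exactly the route the paper intends: the paper derives the corollary from \Cref{thrm WUR l_p finite for} together with the identification $(\oplus_pX_i)_{i=1}^{d} \cong (\oplus_pX_i)_{i=1}^{d-1} \oplus_p X_{d}$, which is precisely your induction on $d$. Your index bookkeeping $k = m + k_d - 1$ with $m = \sum_{i=1}^{d-1}k_i - (d-1) + 1$ telescopes correctly to $k = \sum_{i=1}^{d}k_i - d + 1$, so nothing is missing.
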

In the following result, we provide a sufficient condition for a finite $\ell_p-$product space  to be $k-$WUR (respectively, $k-$WLUR, $k-$WMLUR).
\begin{theorem}\label{thrm WUR l_p finite rev}
	Let $X, Y$ be Banach spaces and $1<p < \infty.$ For any $k_1, k_2, k \in \mathbb{Z}^{+}$ satisfying $k=k_1+k_2-1$, the following statements hold.
	\begin{enumerate}
		\item If $X$ is $k_1-$WUR  and $Y$ is $k_2-$WUR, then $X \oplus_p Y$ is $k-$WUR.
		\item If $X$ is $k_1-$WLUR  and $Y$ is $k_2-$WLUR, then $X \oplus_p Y$ is $k-$WLUR.
		\item If $X$ is $k_1-$WMLUR  and $Y$ is $k_2-$WMLUR, then $X \oplus_p Y$ is $k-$WMLUR.
	\end{enumerate}
\end{theorem}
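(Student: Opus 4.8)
The plan is to verify the sequential characterizations from \Cref{equkwur} (and their local and midpoint analogues) coordinatewise, reducing the two-component problem to the one-component hypotheses on $X$ and $Y$; I describe $(1)$ in detail, since $(2)$ and $(3)$ follow the same template. So I would take $(k+1)$-sequences $(z_n^{(t)})_{t=1}^{k+1}$ in $S_{X\oplus_p Y}$ with $\frac{1}{k+1}\|\sum_{t=1}^{k+1}z_n^{(t)}\|\to 1$, write $z_n^{(t)}=(u_n^{(t)},v_n^{(t)})$, and fix $h_j=(f_j,g_j)\in S_{(X\oplus_p Y)^*}=S_{X^*\oplus_q Y^*}$, where $\tfrac1p+\tfrac1q=1$. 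Since it is enough to contradict $D_k[(z_n^{(t)})_{t=1}^{k+1};(h_j)_{j=1}^{k}]\nrightarrow 0$ along a subsequence, I would pass to a subsequence on which $a_n^{(t)}:=\|u_n^{(t)}\|\to a$ and $b_n^{(t)}:=\|v_n^{(t)}\|\to b$. The unit vectors $(a_n^{(t)},b_n^{(t)})$ of $(\mathbb{R}^2,\|\cdot\|_p)$ satisfy $\tfrac1{k+1}\|\sum_t(a_n^{(t)},b_n^{(t)})\|_p\to 1$ by a triangle-inequality squeeze, and since $(\mathbb{R}^2,\|\cdot\|_p)$ is uniformly rotund for $1<p<\infty$, these vectors asymptotically coincide; hence $a,b$ are independent of $t$, $a^p+b^p=1$, and the same squeeze gives $\|\sum_t u_n^{(t)}\|\to(k+1)a$ and $\|\sum_t v_n^{(t)}\|\to(k+1)b$.

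Assuming first $a,b>0$, the normalized vectors $\bar u_n^{(t)}=u_n^{(t)}/a_n^{(t)}$ and $\bar v_n^{(t)}=v_n^{(t)}/b_n^{(t)}$ are unit vectors with $\tfrac1{k+1}\|\sum_t\bar u_n^{(t)}\|\to 1$ and $\tfrac1{k+1}\|\sum_t\bar v_n^{(t)}\|\to 1$. As all summands are unit vectors, every partial sum over a subset $T$ has norm tending to $|T|$, so for each $(k_1+1)$-subset $\alpha$ the $k_1$-WUR of $X$ (\Cref{equkwur}) yields $D_{k_1}[(\bar u_n^{(\alpha_i)})_{i=1}^{k_1+1};(\phi_j)_{j=1}^{k_1}]\to 0$ for all $\phi_1,\dots,\phi_{k_1}\in S_{X^*}$; bootstrapping through \Cref{prop det prop} upgrades this to $D_s[(\bar u_n^{(t)})_{t\in C};(\phi_j)_{j=1}^{s}]\to 0$ for every $s\ge k_1$, every $(s+1)$-subset $C$, and all $\phi_j\in S_{X^*}$, and \Cref{lem samir} transfers it to the unnormalized $u_n^{(t)}$. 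Symmetrically, $k_2$-WUR of $Y$ gives $D_m[(v_n^{(t)})_{t\in C};\cdot\,]\to 0$ for all $m\ge k_2$. (Functionals of norm $<1$ are handled by scaling, using multilinearity of $D$ in its functional arguments.)

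The decisive step is the bookkeeping of $D_k[(z_n^{(t)})_{t=1}^{k+1};(h_j)_{j=1}^{k}]$. Because $h_j(z_n^{(t)})=f_j(u_n^{(t)})+g_j(v_n^{(t)})$, multilinearity in the $k$ functional rows splits the determinant as $\sum_{S\subseteq\{1,\dots,k\}}D_S$, where in $D_S$ the rows indexed by $S$ are the $X$-rows and those indexed by $S^c$ the $Y$-rows. Fix $S$ and put $s=|S|$. If $s\ge k_1$, I group the all-ones row with the $X$-rows and apply the generalized Laplace expansion along column blocks; each summand carries a factor $D_s[(u_n^{(t)})_{t\in C_1};(f_j)_{j\in S}]$ with $|C_1|=s+1$, which tends to $0$ by the previous paragraph, while the complementary pure $Y$-minor is bounded, so $D_S\to 0$. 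If $s\le k_1-1$, then $k-s\ge k_2$ since $k=k_1+k_2-1$, and grouping the all-ones row with the $Y$-rows produces in each summand a factor $D_{k-s}[(v_n^{(t)})_{t\in C_2};(g_j)_{j\in S^c}]\to 0$ against a bounded pure $X$-minor, so again $D_S\to 0$. Summing over $S$ gives $D_k\to 0$, the desired contradiction. The degenerate cases $a=0$ or $b=0$ are easier: one component tends to $0$ in norm, every $D_S$ touching that component dies, and the surviving pure determinant in the other factor vanishes because the corresponding space, being $k_2$- (resp.\ $k_1$-)WUR with $k\ge k_2$ (resp.\ $k\ge k_1$), is $k$-WUR by \Cref{prop k implies k+1}.

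For $(2)$ I would run the same scheme at a fixed $z=(u,v)\in S_{X\oplus_p Y}$ through the sequential form of $k$-WLUR: rotundity of $(\mathbb{R}^2,\|\cdot\|_p)$ now forces the auxiliary component norms to align with $(\|u\|,\|v\|)$, and I invoke $k_1$-WLUR of $X$ at $u/\|u\|$ and $k_2$-WLUR of $Y$ at $v/\|v\|$. The one new point is that the distinguished vector $z$ must appear in the sub-determinants before the local hypothesis applies; I would bound the sub-determinants omitting $z$ by those containing it via \cite[Lemma 2]{Suya2000}, exactly as in \Cref{prop k implies k+1} and \Cref{prop kWLUR prox convex}. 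For $(3)$ I would instead pass through the unit-ball description of $k$-WMLUR in \Cref{thrm MLUR B_X}: starting from $B_X$ being $k_1$-$w$SCh on $2S_X$ and $B_Y$ being $k_2$-$w$SCh on $2S_Y$, the same coordinatewise normalization and determinant splitting show $B_{X\oplus_p Y}$ is $k$-$w$SCh on $2S_{X\oplus_p Y}$. I expect the determinant bookkeeping of the third paragraph to be the main obstacle, namely arranging that for every $S$ one of the two blocks is large enough ($s\ge k_1$ or $k-s\ge k_2$) to annihilate its minors, closely followed by the extraction of the common component-norm limits $a,b$ from uniform rotundity of $(\mathbb{R}^2,\|\cdot\|_p)$, on which the whole normalization rests; in $(2)$ the extra subtlety is relocating the distinguished vector into each sub-determinant through Suyalatu's lemma.
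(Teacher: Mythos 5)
Your proposal is correct and follows essentially the same route as the paper's proof: sequential characterizations, uniform rotundity (respectively strong Chebyshevness of the unit ball) of $(\mathbb{R}^2,\|\cdot\|_p)$ to align the component norms, componentwise application of the $k_1$- and $k_2$-hypotheses, multilinear splitting of $D_k$ into $2^k$ block determinants, and the pigeonhole $r_t\geq k_1$ or $s_t\geq k_2$ with a Laplace expansion. The only cosmetic deviations are that you extract convergent norm subsequences upfront (the paper's Case~$(ii)$ reduction) and bootstrap to $D_s\to 0$ for all $s\geq k_1$ via \Cref{prop det prop}, where the paper more economically fixes exactly $k_1+1$ rows in the Laplace expansion; your explicit use of \cite[Lemma 2]{Suya2000} in part $(2)$ correctly fills a step the paper leaves implicit.
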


\begin{proof}
$(1)$: Let $X$ be $k_1-$WUR, $Y$ be $k_2-$WUR and $k= k_1+k_2-1.$ Let $(z_n^{(1)}), (z_n^{(2)}), \dots, (z_n^{(k+1)})$ be $(k+1)-$sequences in $S_{(X\oplus_p Y)}$ with $\|\sum_{i=1}^{k+1}z_n^{(i)}\| \to k+1$ and $h_1, h_2, \dots, h_{k} \in S_{(X\oplus_pY)^{*}}.$ Clearly $h_j= (f_j, g_j)$  for some $f_j \in B_{X^*},$ $g_j \in B_{Y^*},$ for all $1 \leq j \leq k$ and  $z_n^{(i)}= (x_n^{(i)}, y_n^{(i)})$ for some $x_n^{(i)} \in B_X,$ $y_n^{(i)} \in B_Y$ for all $n \in \mathbb{N},$ $1 \leq i \leq k+1.$ Let $1 \leq i < j \leq k+1.$ Since
\[
\frac{1}{k+1}\left\Vert\sum\limits_{t=1}^{k+1}z_n^{(t)} \right\Vert  \leq \frac{1}{k+1} \left(\left \Vert z_n^{(i)} +z_n^{(j)}\right\Vert + k-1\right) \leq 1,
\]
it follows that $\|z_n^{(i)}+z_n^{(j)}\| \to 2.$ Note that
\[
	\begin{aligned}
	\Vert z_n^{(i)}+ z_n^{(j)}\Vert &= \Vert (\Vert x_n^{(i)}+x_n^{(j)}\Vert, \Vert y_n^{(i)}+y_n^{(j)}\Vert) \Vert\\
	& \leq  \Vert (\Vert x_n^{(i)} \Vert + \Vert x_n^{(j)}\Vert, \Vert y_n^{(i)} \Vert +\Vert y_n^{(j)}\Vert) \Vert\\
	&=  \Vert (\Vert x_n^{(i)} \Vert , \Vert y_n^{(i)} \Vert )+ ( \Vert x_n^{(j)}\Vert,  \Vert y_n^{(j)}\Vert) \Vert\\
	&\leq 2.
	\end{aligned}
\]
Therefore,  $\Vert (\Vert x_n^{(i)} \Vert , \Vert y_n^{(i)} \Vert )+ ( \Vert x_n^{(j)}\Vert,  \Vert y_n^{(j)}\Vert) \Vert \to 2.$ Since $(\mathbb{R}^{2}, \|\cdot\|_p)$ is uniformly rotund, it follows that
$\Vert( \Vert x_n^{(i)}\Vert- \Vert x_n^{(j)} \Vert, \Vert y_n^{(i)} \Vert-\Vert y_n^{(j)} \Vert ) \Vert \to 0,$ which implies  $\Vert x_n^{(i)}\Vert- \Vert x_n^{(j)} \Vert \to 0$ and
$\Vert y_n^{(i)} \Vert-\Vert y_n^{(j)} \Vert \to 0.$\\
Case$-(i)$: Assume that the sequence $(\|x_n^{(1)}\|)$ converges. Therefore, for every $1 \leq i \leq k+1$ we have $\|x_n^{(i)}\| \to a_1$ for some $a_1 \in [0,1]$, which further implies  $\|y_n^{(i)}\| \to a_2$, where $a_2= (1-a_1^{p})^{\frac{1}{p}} .$ Let $\alpha \in \mathcal{S}_{k+1}(k_1+1).$
Note that for any subsequence $(n_m)$ of $(n)$,   we have   $\| \sum_{i=1}^{k_1+1} z_{n_m}^{(\alpha_i)} \| \to k_1+1$ and
\[
	\begin{aligned}
		\limsup\limits_{n \to \infty} \left \Vert \sum\limits_{i=1}^{k_1+1} z_{n_m}^{(\alpha_i)} \right\Vert &= 	\limsup\limits_{n \to \infty}  \left \Vert \left(\left\Vert\sum\limits_{i=1}^{k_1+1} x_{n_m}^{(\alpha_i)} \right\Vert, \left\Vert \sum\limits_{i=1}^{k_1+1} y_{n_m}^{(\alpha_i)} \right\Vert \right)\right\Vert\\
		& \leq \left \Vert \left( \limsup\limits_{n \to \infty} \left\Vert\sum\limits_{i=1}^{k_1+1} x_{n_m}^{(\alpha_i)} \right\Vert,  \limsup\limits_{n \to \infty} \left\Vert \sum\limits_{i=1}^{k_1+1} y_{n_m}^{(\alpha_i)} \right\Vert \right)\right\Vert\\
		& \leq \left \Vert \left( \limsup\limits_{n \to \infty} \sum\limits_{i=1}^{k_1+1} \left\Vert x_{n_m}^{(\alpha_i)} \right\Vert,  \limsup\limits_{n \to \infty}  \sum\limits_{i=1}^{k_1+1}  \left\Vert y_{n_m}^{(\alpha_i)} \right\Vert \right)\right\Vert\\
		&= \|((k_1+1)a_1, (k_1+1)a_2)\|\\
		&= k_1+1.
	\end{aligned}
\]
Thus $\limsup\limits_{n \to \infty} \|\sum_{i=1}^{k_1+1} x_{n_m}^{(\alpha_i)} \| = (k_1+1)a_1,$ which further implies $\|\sum_{i=1}^{k_1+1} x_n^{(\alpha_i)} \|  \to (k_1+1)a_1.$  If $a_1=0,$ by \Cref{lem samir}, we have $D_{k_1}[(x_n^{(\alpha_i)})_{i=1}^{k_1+1}; (f_{\lambda_j})_{j=1}^{k_1}]  \to 0$ for all  $\lambda \in \mathcal{S}_{k}(k_1).$ Suppose $a_1 \neq 0.$  Since $X$ is $k_1-$WUR, we have  $D_{k_1}[(x_n^{(\alpha_i)})_{i=1}^{k_1+1}; (f_{\lambda_j})_{j=1}^{k_1}]  \to 0$ for all  $\lambda \in \mathcal{S}_{k}(k_1).$ Similarly, for every $\beta \in \mathcal{S}_{k+1}(k_2+1)$ we have $\|\sum_{j=1}^{k_2+1} y_n^{(\beta_j)} \| \to (k_2+1)a_2$ and $D_{k_2}[(y_n^{(\beta_i)})_{i=1}^{k_2+1}; (g_{\mu_j})_{j=1}^{k_2}]  \to 0$ for all  $\mu \in \mathcal{S}_{k}(k_2).$
Consider,
\[
\begin{aligned}
	D_k[(z_n^{(i)})_{i=1}^{k+1}; (h_j)_{j=1}^{k}] &= \begin{vmatrix}
		1 & 1& \dots &1\\
		f_1(x_n^{(1)})+g_1(y_n^{(1)}) & f_1(x_n^{(2)})+g_1(y_n^{(2)}) & \dots & f_1(x_n^{(k+1)})+g_1(y_n^{(k+1)})\\
		\vdots & \vdots & \ddots & \vdots\\
		f_k(x_n^{(1)})+g_k(y_n^{(1)}) & f_k(x_n^{(2)})+g_k(y_n^{(2)}) & \dots & f_k(x_n^{(k+1)})+g_k(y_n^{(k+1)})\\
	\end{vmatrix}.
	\end{aligned}
\]
Since the  determinant is multilinear, we can write the preceding determinant as the sum of $2^{k}$ determinants each of order $(k+1).$ Then, by rearranging the rows, we can rewrite all $2^{k}$ determinants such that
\[
|	D_k[(z_n^{(i)})_{i=1}^{k+1}; (h_j)_{j=1}^{k}]| \leq \sum_{t=1}^{2^k}|a_n^{(t)}|,
\]
where
  \[
  	\begin{aligned}
  		a_n^{(t)} &= \begin{vmatrix}
  			1 & 1& \dots &1\\
  			f_{\alpha_1}(x_n^{(1)})& f_{\alpha_1}(x_n^{(2)})& \dots & f_{\alpha_1}(x_n^{(k+1)})\\
  			\vdots & \vdots & \ddots & \vdots\\
  			f_{\alpha_{r_t}}(x_n^{(1)}) & f_{\alpha_{r_t}}(x_n^{(2)}) & \dots & f_{\alpha_{r_t}}(x_n^{(k+1)})\\
  			g_{\beta_1}(y_n^{(1)})& g_{\beta_1}(y_n^{(2)}) & \dots & g_{\beta_1}(y_n^{(k+1)})\\
  				\vdots & \vdots & \ddots & \vdots\\
  			g_{\beta_{s_t}}(y_n^{(1)})& g_{\beta_{s_t}}(y_n^{(2)}) & \dots & g_{\beta_{s_t}}(y_n^{(k+1)})
  		\end{vmatrix}
  	\end{aligned}
  \]
for some $\alpha \in \mathcal{S}_k(r_t),$ $\beta \in \mathcal{S}_k(s_t)$ and  $0 \leq r_t, s_t \leq k$ with $r_t+s_t=k$ for all $1 \leq t \leq 2^{k}.$ Observe that in each determinant $a_n^{(t)},$ either $r_t \geq k_1$ or $s_t \geq k_2.$ Consider the determinant $a_n^{(t_0)},$ for some $1 \leq t_0 \leq 2^{k}.$\\
subcase$-(a)$: Suppose $r_{t_0}\geq k_1.$ Then evaluate the determinant $a_n^{(t_0)}$ using the Laplace expansion of the determinant \cite{HoJo2013} (by fixing the first $(k_1+1)-$rows). Since each entry of the determinant $a_n^{(t_0)}$ is bounded by 1 and $D_{k_1}[(x_n^{(\alpha_i)})_{i=1}^{k_1+1}; (f_{\lambda_j})_{j=1}^{k_1}]  \to 0$ for all $\alpha \in \mathcal{S}_{k+1}(k_1+1),$ $\lambda \in \mathcal{S}_{k}(k_1),$ it follows that $|a_n^{(t_0)}| \to 0.$\\
subcase$-(b)$: Suppose $s_{t_0}\geq k_2.$ Then evaluate the determinant $a_n^{(t_0)}$ using the Laplace expansion of the determinant \cite{HoJo2013} (by fixing the rows $R_1, R_{r_{t_0}+1}, R_{r_{t_0}+2}, \dots, R_{r_{t_0}+k_2}).$ Since each entry of the determinant $a_n^{(t_0)}$ is bounded by 1 and  $D_{k_2}[(y_n^{(\beta_i)})_{i=1}^{k_2+1}; (g_{\mu_j})_{j=1}^{k_2}]  \to 0$ for all $\beta \in \mathcal{S}_{k+1}(k_2+1),$ $\mu \in \mathcal{S}_{k}(k_2),$ it follows that $|a_ n^{(t_0)}| \to 0.$\\
Therefore, $|a_n^{(t)}| \to 0$ for all $1 \leq t \leq 2^{k}.$ Thus, $|D_k[(z_n^{(i)})_{i=1}^{k+1}; (h_j)_{j=1}^{k}]| \to 0.$\\
Case$-(ii)$: Assume that the sequence $(\|x_n^{(1)}\|)$ does not converge. We need to  show that $|D_k[(z_n^{(i)})_{i=1}^{k+1}; (h_j)_{j=1}^{k}]| \to 0.$ Suppose $|D_k[(z_n^{(i)})_{i=1}^{k+1}; (h_j)_{j=1}^{k}]|$ does not converge to $0.$ Then there exist a subsequence $(n_m)$ of $(n)$ and $\epsilon>0$ such that $|D_k[(z_{n_m}^{(i)})_{i=1}^{k+1}; (h_j)_{j=1}^{k}]| \geq \epsilon$ for all $m \in \mathbb{N}.$ Since the sequence $(\|x_{n_m}^{(1)}\|)$ is bounded, there exists a subsequence $(\|x_{m_s}^{(1)}\|)$ of $(\|x_{n_m}^{(1)}\|)$ such that $\|x_{m_s}^{(1)}\| \to b_1$ for some $b_1 \in [0,1].$ Now, by Case$-(i),$ we have $|D_k[(z_{m_s}^{(i)})_{i=1}^{k+1}; (h_j)_{j=1}^{k}]| \to 0$ as $s \to \infty,$ which is a contradiction. Thus,  $|D_k[(z_n^{(i)})_{i=1}^{k+1}; (h_j)_{j=1}^{k}]| \to 0.$ \\
$(2)$: 	Let $X$ be $k_1-$WLUR, $Y$ be $k_2-$WLUR and $k= k_1+k_2-1.$ Let $z \in S_{(X\oplus_p Y)},$  $(z_n^{(1)}), (z_n^{(2)}), \dots,$ $(z_n^{(k)})$ be $(k)-$sequences in $S_{(X\oplus_p Y)}$ with $\|z+\sum_{i=1}^{k}z_n^{(i)}\| \to k+1$ and  $h_1, h_2, \dots, h_{k} \in S_{(X\oplus_pY)^{*}}.$ Clearly $h_j= (f_j, g_j)$  for some $f_j \in B_{X^*}$, $g_j \in B_{Y^*},$ for all $1 \leq j \leq k$, $z=(x,y)$ and  $z_n^{(i)}= (x_n^{(i)}, y_n^{(i)})$ for some $x,x_n^{(i)} \in B_X,$ $y,y_n^{(i)} \in B_Y$ for all $n \in \mathbb{N}$,  $1 \leq i \leq k$. By considering $z_n^{(k+1)}=z$ for all $n \in \mathbb{N}$ and following the similar steps involved in the proof of $(1)$, we obtain $\|x_n^{(i)}\| \to \|x\|$ and $\|y_n^{(i)}\| \to \|y\|$ for all $1 \leq i \leq k$. Now the rest of the proof follows as Case$-(i)$ in the proof of $(1)$.\\
$(3)$: 	Let $X$ be $k_1-$WMLUR, $Y$ be $k_2-$WMLUR and $k= k_1+k_2-1.$ Now, by \Cref{thrm MLUR B_X}, it is enough to show that $B_{X \oplus_p Y}$ is $k-$$w$SCh on $2S_{X \oplus_p Y}.$ Let $z \in S_{X \oplus_p Y},$ $(z_n^{(1)}), (z_n^{(2)}), \dots, (z_n^{(k+1)})$ be $(k+1)-$sequences in $B_{(X\oplus_p Y)}$ such that  $\|2z-z_n^{(i)}\| \to 1$ for all $1 \leq i \leq k+1$ and  $h_1, h_2, \dots, h_{k} \in S_{(X\oplus_pY)^{*}}.$ Clearly $h_j= (f_j, g_j)$ for some $f_j \in B_{X^*}$, $g_j \in B_{Y^*},$ for all $1 \leq j \leq k,$  $z=(x,y)$ and  $z_n^{(i)}= (x_n^{(i)}, y_n^{(i)})$ for some $x, x_n^{(i)} \in B_X$, $y, y_n^{(i)} \in B_Y$ for all $n \in \mathbb{N}$, $1 \leq i \leq k+1.$  Let $1 \leq i \leq k+1$. Note that
\[
	\begin{aligned}
		\Vert 2z-z_n^{(i)}\Vert &= \Vert (\Vert2x- x_n^{(i)}\Vert, \Vert 2y-y_n^{(i)}\Vert) \Vert\\
		& \geq  \Vert (\Vert 2x \Vert- \Vert x_n^{(i)} \Vert, \Vert 2y \Vert- \Vert y_n^{(i)} \Vert ) \Vert\\
		&=  \Vert (\Vert 2x \Vert , \Vert 2y \Vert )- ( \Vert x_n^{(i)}\Vert,  \Vert y_n^{(i)}\Vert) \Vert\\
		&\geq \Vert 2z \Vert - \Vert z_n^{(i)} \Vert\\
		&\geq 1,
	\end{aligned}
\]
which implies $\Vert (\Vert 2x \Vert , \Vert 2y \Vert )- ( \Vert x_n^{(i)}\Vert,  \Vert y_n^{(i)}\Vert) \Vert \to 1.$ Since $B_{(\mathbb{R}^{2}, \|\cdot\|_p)}$ is strongly Chebyshev on $(\mathbb{R}^{2}, \|\cdot\|_p),$ we have $( \Vert x_n^{(i)}\Vert,  \Vert y_n^{(i)}\Vert)  \to (\Vert x \Vert , \Vert y \Vert )$, which further implies  $\Vert x_n^{(i)}\Vert \to  \Vert x \Vert$ and
$\Vert y_n^{(i)} \Vert \to \Vert y \Vert$.
For any subsequence $(n_m)$ of $(n)$, observe that
\[
	\begin{aligned}
		\liminf\limits_{n \to \infty} \left \Vert  2z-z_{n_m}^{(i)} \right\Vert &= 	\liminf\limits_{n \to \infty}  \left \Vert \left(\left\Vert 2x-x_{n_m}^{(i)} \right\Vert, \left\Vert 2y-  y_{n_m}^{(i)} \right\Vert \right)\right\Vert\\
		& \geq \left \Vert \left(\liminf\limits_{n \to \infty} \left\Vert 2x-x_{n_m}^{(i)} \right\Vert,  \liminf\limits_{n \to \infty} \left\Vert 2y - y_{n_m}^{(i)} \right\Vert \right)\right\Vert\\
		&  \geq  \left\Vert \left( \left\Vert  x \right\Vert, \left\Vert y \right\Vert \right) \right \Vert\\
		&= 1,
	\end{aligned}
\]
which implies $\liminf\limits_{n \to \infty} \| 2x-x_{n_m}^{(i)} \| = \|x\|.$ Therefore, $\| 2x-x_n^{(i)} \| \to \|x\|$. If $\|x\|=0,$ by \Cref{remark vol}, we have $D_{k_1}[(x_n^{(\alpha_i)})_{i=1}^{k_1+1}; (f_{\lambda_j})_{j=1}^{k_1}]  \to 0$ for all $\alpha \in \mathcal{S}_{k+1}(k_1+1)$ and $\lambda \in \mathcal{S}_{k}(k_1).$ Assume $\|x\| \neq 0.$ Note that for any $1 \leq i \leq k+1,$ we have
\[
 \left \Vert \frac{x_n^{(i)}}{\|x_n^{(i)}\|}- 2 \frac{x}{\|x\|} \right\Vert = \left\vert \frac{x_n^{(i)}}{\|x_n^{(i)}\|} - \frac{x_n^{(i)}}{\|x\|}+\frac{x_n^{(i)}}{\|x\|} - 2 \frac{x}{\|x\|} \right\vert \leq \left\Vert x_n^{(i)} \right\Vert \left\vert \frac{1}{\|x_n^{(i)}\|}- \frac{1}{\|x\|} \right\vert+ \frac{1}{\|x\|} \left\Vert x_n^{(i)}-2x \right\Vert
\]
and hence $\left\Vert \frac{x_n^{(i)}}{\|x_n^{(i)}\|}- 2 \frac{x}{\|x\|} \right\Vert \to 1.$
Since $X$ is $k_1-$WMLUR, by  \Cref{thrm MLUR B_X}, it follows that $B_X$ is $k_1-$$w$SCh on $2S_X.$ Therefore  $D_{k_1}\left[\left(\frac{x_n^{(\alpha_i)}}{\|x_n^{(\alpha_i)}\|}\right)_{i=1}^{k_1+1}; (f_{\lambda_j})_{j=1}^{k_1}\right]  \to 0,$  further by  \Cref{remark vol} and \Cref{lem samir}, we have $D_{k_1}[(x_n^{(\alpha_i)})_{i=1}^{k_1+1}; (f_{\lambda_j})_{j=1}^{k_1}]  \to 0$  for all $\alpha \in \mathcal{S}_{k+1}(k_1+1)$ and $\lambda \in \mathcal{S}_{k}(k_1).$ Similarly, $D_{k_2}[(y_n^{(\beta_i)})_{i=1}^{k_2+1}; (g_{\mu_j})_{j=1}^{k_2}]  \to 0$ for all $\beta \in \mathcal{S}_{k+1}(k_2+1)$ and $\mu \in \mathcal{S}_{k}(k_2).$ Now by repeating the similar technique involved in Case$-(i)$ of the proof of $(1)$, we obtain $|D_k[(z_n^{(i)})_{i=1}^{k+1}; (h_j)_{j=1}^{k}]| \to 0.$ Hence the proof.
\end{proof}
The next result is an immediate consequence of \Cref{thrm WUR l_p finite rev} and   the fact $(\oplus_pX_i)_{i=1}^{d} \cong (\oplus_pX_i)_{i=1}^{d-1} \oplus_p X_{d}.$
\begin{corollary}\label{coro WUR l_p finite rev}
	Let $d \in \mathbb{Z}^{+}$,  $d>1$ and  $1 < p < \infty.$ Let $X_i$ be a Banach space for all $1 \leq i \leq d$ and $X= (\oplus_pX_i)_{i=1}^{d}.$ If $X_i$ is $k_i-$WUR  (respectively, $k_i-$WLUR, $k_i-$WMLUR) for all $1 \leq i \leq d,$ then  $X$ is $k-$WUR  (respectively, $k-$WLUR, $k-$WMLUR) where $k= \sum_{i=1}^{d}k_i-d+1.$
\end{corollary}

As a consequence of \cite[A.2, A.3, A.4]{Smit1986}, \Cref{coro WUR stability,coro WUR l_p finite for,coro WUR l_p finite rev}, we now present the  necessary and sufficient condition for an infinite $\ell_p-$product space to be $k-$WUR (respectively, $k-$WLUR, $k-$WMLUR).
\begin{theorem}
	Let $1 < p < \infty$, $X_i$ be a Banach space for all $i \in \mathbb{N}$ and $X= (\oplus_pX_i)_{i\in \mathbb{N}}.$ Then the following statements are equivalent.
	\begin{enumerate}
		\item  $X$ is $k-$WUR  (respectively, $k-$WLUR, $k-$WMLUR).
		\item  There exists $j \in \mathbb{N}$ such that  $X_i$ is WUR  (respectively, WLUR, WMLUR) for all $i > j$ and for each  $ i \leq j$ there exists $k_i \in \mathbb{Z}^{+}$ with $\sum_{i=1}^{j}k_i-j+1 \leq k$ such that $X_i$ is $k_i-$WUR  (respectively, $k_i-$WLUR, $k_i-$WMLUR).
	\end{enumerate}
\end{theorem}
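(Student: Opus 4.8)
The plan is to prove both implications by collapsing the infinite $\ell_p$-sum to a finite one: I bundle the tail into a single coordinate and then quote the finite-product corollaries. Concretely, for a fixed $j\in\mathbb{N}$ I use the canonical isometric identification $X\cong(\oplus_p W_i)_{i=1}^{j+1}$, where $W_i=X_i$ for $1\le i\le j$ and $W_{j+1}=Z:=(\oplus_p X_i)_{i>j}$; this is legitimate because reindexing and re-bracketing an $\ell_p$-sum is an isometry for $1<p<\infty$. The same reduction drives both directions, and the three notions (WUR, WLUR, WMLUR) are handled in parallel via the respective parts of the cited results.

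For $(1)\Rightarrow(2)$: assume $X$ is $k$-WUR. First I would invoke \Cref{coro WUR stability} to conclude that all but at most $(k-1)$ of the $X_i$ are WUR; choosing $j$ large enough that every exceptional index lies in $\{1,\dots,j\}$ yields $X_i$ WUR for all $i>j$. Next, viewing $X$ as the finite sum $(\oplus_p W_i)_{i=1}^{j+1}$ and applying \Cref{coro WUR l_p finite for} with $d=j+1$, I obtain integers $m_1,\dots,m_{j+1}$ with $k=\sum_{i=1}^{j+1}m_i-j$ such that each $W_i$ is $m_i$-WUR. Setting $k_i:=m_i$ for $i\le j$ makes each such $X_i$ be $k_i$-WUR, and since $m_{j+1}\ge 1$ we get $\sum_{i=1}^{j}k_i-j+1=k-m_{j+1}+1\le k$, which is exactly condition (2). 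The WLUR and WMLUR cases are verbatim, using the corresponding clauses of \Cref{coro WUR stability,coro WUR l_p finite for}.

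For $(2)\Rightarrow(1)$: assume (2) and put $m:=\sum_{i=1}^{j}k_i-j+1\le k$. Since each $X_i$ with $i>j$ is WUR, the tail $Z$ is WUR by \cite[A.2]{Smit1986}, i.e.\ $1$-WUR. Realizing $X\cong(\oplus_p W_i)_{i=1}^{j+1}$ with $W_i=X_i$ being $k_i$-WUR for $i\le j$ and $W_{j+1}=Z$ being $1$-WUR, \Cref{coro WUR l_p finite rev} with $d=j+1$ gives that $X$ is $\tilde k$-WUR, where $\tilde k=(\sum_{i=1}^{j}k_i+1)-(j+1)+1=m$. Finally, because $m\le k$, repeated application of \Cref{prop k implies k+1} promotes $m$-WUR to $k$-WUR. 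The WLUR case is identical, using \cite[A.3]{Smit1986} for the tail and the pointwise monotonicity of \Cref{prop k implies k+1}; the WMLUR case uses \cite[A.4]{Smit1986} for the tail together with \Cref{coro WUR l_p finite rev}(3), and the monotonicity of WMLUR, which I obtain by combining \Cref{thrm MLUR B_X} with \Cref{prop wUSCh (k+1)}(3).

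The index bookkeeping and the isometric re-bracketing are routine; the part deserving care is the reverse direction, where I must supply the tail's rotundity through Smith's infinite-product theorems and must verify that each of the three notions is monotone in $k$ (the $\ell$-version implying the $(\ell+1)$-version) in order to pass from $m$ to $k$. The main conceptual obstacle is recognizing that collapsing the infinite WUR (resp.\ WLUR, WMLUR) tail into a single coordinate reduces the infinite statement to the already-proven finite corollaries; once that reduction is in place, the arithmetic matches automatically, with the slack $m_{j+1}\ge 1$ closing the forward direction and the monotonicity closing the reverse.
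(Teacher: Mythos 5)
Your proposal is correct and follows exactly the route the paper intends: the paper states this theorem as a direct consequence of \cite[A.2, A.3, A.4]{Smit1986} together with \Cref{coro WUR stability,coro WUR l_p finite for,coro WUR l_p finite rev}, which is precisely your reduction of bundling the WUR (respectively, WLUR, WMLUR) tail into a single coordinate and invoking the finite-sum corollaries in both directions. Your one added piece of care --- supplying the missing monotonicity of $k-$WMLUR in $k$ via \Cref{thrm MLUR B_X} combined with \Cref{prop wUSCh (k+1)}, since \Cref{prop k implies k+1} only covers $k-$WUR and $k-$WLUR --- is both necessary and correct.
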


We now provide few examples to demonstrate that some of the implications and assertions mentioned in the preceding sections cannot  be reversed in general.

The subsequent example reveals that some implications observed  in \Cref{rem KWUR} and one given immediately below \Cref{def KWLUR} cannot be reversed generally.

\begin{example}\label{eg kWUR}\
	\begin{enumerate}
			\item Consider the space $X= (\ell_2, \| \cdot\|_L)$ from \cite[Example 1]{Smit1978a} and $k \in \mathbb{Z}^{+}.$ In \cite{Smit1978a}, it is proved that $X$ is LUR and reflexive, but not WUR. By \Cref{coro lp WUR}, $\ell_2(X)$ is not $k-$WUR. However, by \cite[Theorem 1.1]{Lova1955}, $\ell_2(X)$ is LUR (hence, $k-$LUR).
		\item Consider the space $X= (\ell_2, \| \cdot\|_A)$ from \cite[Example 3]{Smit1978a} and $k \in \mathbb{Z}^{+}.$ In \cite{Smit1978a}, it is proved that $X$ is strongly rotund (hence, MLUR), but not WLUR. By \Cref{coro lp WUR}, $\ell_2(X)$ is not $k-$WLUR. However, by \cite{Smit1986}, $\ell_2(X)$ is strongly rotund (hence, $k-$strongly rotund and $k-$MLUR).

	\end{enumerate}
\end{example}

We now present an example of a space which is strongly rotund and $(k+1)-$WUR, but not $k-$WLUR  as specified immediately below \Cref{prop k implies k+1}.
\begin{example} \label{eg k+1 WUR}
	For each $x=(x_1, x_2, \dots)$ in $\ell_2,$ define $\|x\|_1= \sup\{\|x\|_{i_1, i_2}: i_1< i_2\},$ where $\|x\|_{i_1, i_2}$ is defined as in \Cref{eg k not k-1 WUR}. Let $(c_n)$ be a decreasing sequence of positive real numbers converges to zero. Define the continuous map $T: (\ell_2, \|\cdot\|_1) \to (\ell_2, \|\cdot\|_2)$ by $T(x_1, x_2, \dots)= (c_2x_2, c_3x_3, \dots).$ Now, define $\|x\|_r^2= \|x\|_1^2+ \|T(x)\|_2^2$ for all $x \in \ell_2.$ Let $B=(\ell_2, \|\cdot\|_r).$ In \cite[Example 2]{NaWa1988}, it is proved that $B$ is $2-$UR and rotund, but not LUR. Now, we will prove that the space $B$ is not WLUR. Let $(e_n)$ be the standard basis of $(\ell_2, \|\cdot\|_2).$ It is easy to see that $\|e_1\|_r=1,$ $\|e_n\|_r \to 1$ and $\|e_1+e_n\|_r \to 2.$ Consider $f=\frac{e_1}{\|e_1\|} \in S_{B^*}.$ Observe that $f(e_1-e_n)= \frac{1}{\|e_1\|}$ for all $n \geq 2.$ Therefore, $e_n-e_1$ does not converge to $0$ weakly. Hence, $B$ is not WLUR. For any $k \in \mathbb{Z}^{+}$ , consider $X=B \oplus_2 B \oplus_2 \dots \oplus_2 B$ $(k)-$times. Clearly, $X$ is strongly rotund. Since $B$ is $2-$WUR, it follows from \Cref{coro WUR l_p finite rev} that $X$ is $(k+1)-$WUR. However, it is easy to see from \Cref{thrm WUR stability} that $X$ is not $k-$WLUR.
\end{example}

The following example illustrates that the implication observed immediately after \Cref{def k-SCh} cannot be reversed in general. The example also shows that the converse of \Cref{prop kWLUR prox convex} not necessarily true.

\begin{example}\label{eg converse Chev2} Let $k \in \mathbb{Z}^{+}$ and  $X$ be a $k-$strongly rotund, but not $k-$WLUR space (see, \Cref{eg kWUR,eg k+1 WUR}). Therefore, by \cite[Theorem 2.10]{VeRS2021}, every closed convex subset of $X$ is $k-$SCh on $X,$ in particular $B_X$ is $k-$$w$SCh on $2S_X.$ However, by \Cref{thrm k-WUR B_X}, $B_X$ is not $k-$$w$USCh on $2S_X.$
\end{example}

As mentioned after \Cref{prop WUR Rotund}, the following example demonstrate that  $k-$weakly uniform rotundity of $X$ does not imply that the space $X^{**}$ is $k-$WMLUR.

\begin{example}\label{eg WUR rotund suff}
	Let $Z=(c_0, \|\cdot\|_{\infty})$and $k \in \mathbb{Z}^{+}.$ By \cite[Chapter II, Corollary 6.9]{DeGZ1993}, $Z$ admits an equivalent norm (say, $\|\cdot\|_r)$ such that $Y= (c_0, \|\cdot\|_r)$ is WUR. Since it is proved in \cite{AlDi1985} that $\ell_{\infty}$ does not have any equivalent WMLUR renorming, we have $Y^{**}$ is not WMLUR. Consider the Banach space $X= l_2(Y).$ Then, by \cite[A.2]{Smit1986}, $X$ is WUR (hence, $k-$WUR). Clearly, $X^{**} \cong l_2(Y^{**}).$ Therefore, by \Cref{coro lp WUR}, $X^{**}$ is not $k-$WMLUR.
	\end{example}

\end{document}